\documentclass[10pt]{amsproc}
\usepackage{amssymb,amsthm,stmaryrd,amsmath,amscd,amsrefs,bbm,todonotes}
\usepackage{tikz-cd}
\usepackage[hidelinks]{hyperref}
\usepackage{chngcntr}
\usepackage[foot]{amsaddr}

\counterwithin{equation}{subsection}
\setcounter{tocdepth}{1}

\newtheorem{theorem}[equation]{Theorem}
\newtheorem{lemma}[equation]{Lemma}
\newtheorem{proposition}[equation]{Proposition}
\newtheorem{corollary}[equation]{Corollary}
\newtheorem{theoremA}{Theorem}

\theoremstyle{definition}
\newtheorem*{question}{Question}
\newtheorem{notation}[equation]{Notation}

\newtheorem{construction}[equation]{Construction}
\newtheorem{remark}[equation]{Remark}

\newtheorem{definition}[equation]{Definition}

\title[Potentially diagonalisable lifts]{Potentially diagonalisable lifts with controlled Hodge--Tate weights}
\author{Robin Bartlett}
\subjclass[2010]{11F80; 11F33} 
\address{Max Planck Institute for Mathematics, Bonn}
\email{robinbartlett18@mpim-bonn.mpg.de}
\begin{document}
\maketitle
	
\begin{abstract}
	Motivated by the weight part of Serre's conjecture we consider the following question. Let $K/\mathbb{Q}_p$ be a finite extension and suppose $\overline{\rho} \colon G_K \rightarrow \operatorname{GL}_n(\overline{\mathbb{F}}_p)$ admits a crystalline lift with Hodge--Tate weights contained in the range $[0,p]$. Does $\overline{\rho}$ admits a potentially diagonalisable crystalline lift of the same Hodge--Tate weights? We answer this question in the affirmative when $K = \mathbb{Q}_p$ and $n \leq 5$, and $\overline{\rho}$ satisfies a mild `cyclotomic-free' condition. We also prove partial results when $K/\mathbb{Q}_p$ is unramified and $n$ is arbitrary.
\end{abstract}
\tableofcontents

\section{Introduction}

Let $p$ be a prime and $K/\mathbb{Q}_p$ a finite extension. A potentially diagonalisable $p$-adic representation of $G_K$ is one which is potentially crystalline and which, possibly after restriction to $G_{K'}$ for an extension $K'/K$, lies in the same irreducible component of a potentially crystalline deformation ring as a representation which is a sum of crystalline characters. The notion was introduced in \cite{BLGGT} where very general change of weight theorems for automorphic Galois representations were proven under the assumption that the representations in question are potentially diagonalisable above $p$, cf. \cite[Theorem E]{BLGGT}.

One motivation for these theorems comes from the following question, which is a generalisation of Serre's classical modularity conjecture. If $F$ is a CM field and $\overline{r}\colon G_F \rightarrow \operatorname{GL}_n(\overline{\mathbb{F}}_p)$ is continuous and irreducible, then what are the possible weights for which there exists an automorphic representation giving rise to $\overline{r}$? Very little is currently known about this question. However if one assumes that $\overline{r}$ is automorphic of \emph{some} weight then the outlook is better---in this case if $\overline{r}_v$ admits a potentially diagonalisable crystalline lift at each place $v \mid p$ of $F$ then, under a Taylor--Wiles hypothesis, it can be shown that $\overline{r}$ is automorphic with weight equal to the Hodge--Tate weights of the lifts of $\overline{r}_v$, cf. \cite[Theorem 3.1.3]{BLGG} or \cite[Theorem 5.2.1]{BG18}. Since representations associated to automorphic forms which are unramified above $p$ are crystalline above $p$, one is lead to the following (purely local) question:

\begin{question}
Let $K/\mathbb{Q}_p$ be a finite extension and let $\overline{\rho}\colon G_K \rightarrow \operatorname{GL}_n(\overline{\mathbb{F}}_p)$ be a continuous representation. If $\overline{\rho}$ has a crystalline lift\footnote{By a crystalline lift we mean a crystalline representation $\rho\colon G_K \rightarrow \operatorname{GL}_n(\overline{\mathbb{Z}}_p)$ such that $\rho \otimes_{\overline{\mathbb{Z}}_p} \overline{\mathbb{F}}_p \cong \overline{\rho}$} then does $\overline{\rho}$ have a potentially diagonalisable crystalline lift with the same Hodge--Tate weights?
\end{question} 

When the answer to this question is yes one can deduce cases of the weight part of Serre's modularity conjecture. In particular the results of this paper can be applied in this way. However, since our focus here is local, we do not express these applications explicitly.

When $K/\mathbb{Q}_p$ is unramified Hui Gao and Tong Liu \cite{GT14} have shown that any crystalline representation with Hodge--Tate weights contained in $[0,p-1]$ is potentially diagonalisable, so in this case the answer to the question is yes. When $n=2$ and the weights are contained in $[0,p]$ the answer is also known to be yes by work of Toby Gee, Tong Liu and David Savitt \cite{GLS,GLS15} (see also the work of Xiyuan Wang \cite{Wang17} who extended their proof to the case $p=2$). These results were used to establish the weight part of Serre's conjecture in dimension two. When $K/\mathbb{Q}_p$ is unramified, the author \cite{B18} has shown the answer is yes for weights in $[0,p]$ and semi-simple $\overline{\rho}$ of any dimension.

In this paper we address the question for weights in the range $[0,p]$, in the presence of non-split extensions. Our methods are generalisations of those employed in \cite{GLS,GLS15} to dimensions $>2$. Such generalisations have already been made by Hui Gao \cite{Gao17a,Gao17b} in the case when every Jordan--Holder factor of $\overline{\rho}$ is one-dimensional (and under some additional conditions). The main innovation in this paper is to put the calculations used in the works mentioned above in a more conceptual framework. This allows us to consider representations whose Jordan--Holder factors are not one-dimensional, and to remove some of the conditions appearing in Gao's work.

One crucial assumption that is necessary for our methods is that $\overline{\rho}$ be cyclotomic-free (cf. Definition~\ref{cycfreecondition}). This is an $n$-dimensional generalisations of the avoidance of representations of the shape $(\begin{smallmatrix}
\chi_{\operatorname{cyc}} & * \\ 0 & 1
\end{smallmatrix})$. Our first theorem is then the following.

\begin{theoremA}\label{thma}
	Let $\overline{\rho}\colon G_K \rightarrow \operatorname{GL}_n(\overline{\mathbb{F}})$ be continuous and cyclotomic-free. Suppose  there exists a crystalline representation $\rho \colon G_K \rightarrow \operatorname{GL}_n(\overline{\mathbb{Z}}_p)$ with $\overline{\rho} \cong \rho \otimes_{\overline{\mathbb{Z}}_p} \overline{\mathbb{F}}_p$ and with Hodge--Tate weights $\in [0,p]$. 
	
	If $K = \mathbb{Q}_p$ and $n \leq 5$ then there exists a potentially diagonalisable crystalline representation $\rho'$ with $\overline{\rho} \cong \rho' \otimes_{\overline{\mathbb{Z}}_p} \overline{\mathbb{F}}_p$ and with Hodge--Tate weights equal to those of $\rho$.
\end{theoremA}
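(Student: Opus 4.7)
The natural framework for this question is the theory of Breuil--Kisin modules. Setting $\mathfrak{S} = W(\mathbb{F})[\![u]\!]$ with Frobenius $\varphi(u) = u^p$ and $E(u) = u-p$, a crystalline lift $\rho$ with Hodge--Tate weights in $[0,p]$ corresponds to a finite free $\mathfrak{S}$-module $\mathfrak{M}$ of rank $n$ equipped with a Frobenius whose cokernel is killed by $E(u)^p$. Reducing modulo $p$ yields an object $\overline{\mathfrak{M}}$ over $\mathbb{F}[\![u]\!]$ encoding the restriction of $\overline{\rho}$ to $G_\infty = \mathrm{Gal}(\overline{\mathbb{Q}}_p/\mathbb{Q}_p(p^{1/p^\infty}))$. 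The plan is to pass to this setting and analyse $\overline{\mathfrak{M}}$ in a basis compatible with the Jordan--H\"older filtration of $\overline{\rho}$.

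The technical core of the argument is to establish a canonical ``shape'' for the Frobenius matrix of $\overline{\mathfrak{M}}$. In a basis adapted to the Jordan--H\"older filtration the Frobenius becomes block upper-triangular, and the key input is a precise bound on the $u$-adic valuation of each off-diagonal entry, depending only on the pair of adjacent constituents involved. The cyclotomic-free hypothesis is designed to exclude the extremal configuration in which some off-diagonal entry would be forced to sit at the maximal valuation $p$, which is the higher-dimensional analogue of a block of the form $\bigl(\begin{smallmatrix}\chi_{\mathrm{cyc}} & * \\ 0 & 1\end{smallmatrix}\bigr)$. The conceptual innovation advertised in the introduction should amount to organising these constraints into a description of the allowable shapes depending only on $\overline{\rho}$, bypassing the ad hoc matrix computations of earlier work and treating constituents of arbitrary dimension uniformly.

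Granted such a shape, the potentially diagonalisable lift $\rho'$ is constructed by writing down a Breuil--Kisin module over $\mathfrak{S}$ whose Frobenius is block-diagonal with respect to the same filtration, each diagonal block being a potentially diagonalisable crystalline lift of the corresponding constituent (which exists by the earlier low-dimensional or semisimple results cited in the introduction). After a suitable tame base change this lift splits as a direct sum of crystalline characters, giving the potential diagonalisability; the Hodge--Tate weights of $\rho'$ agree with those of $\rho$ by construction, since they are read off from the diagonal. The restriction $n \leq 5$ and $K = \mathbb{Q}_p$ enters so that the combinatorics of Jordan--H\"older patterns and of cyclotomic twists among successive constituents remains finite and tractable, allowing a direct case check of the shape classification. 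I expect the hardest step to be this classification: one must show that cyclotomic-freeness alone forces the Frobenius matrix into a form which can be diagonalised in some lift, and must do so while controlling the ambiguity in the choice of Breuil--Kisin lattice above $\overline{\mathfrak{M}}$, so that the diagonal lift produced really does reduce to the given $\overline{\rho}$.
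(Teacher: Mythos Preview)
Your sketch starts correctly---pass to the mod-$p$ Breuil--Kisin module $\overline{\mathfrak{M}}$ and work inductively along a Jordan--H\"older filtration---but the mechanism you propose for building $\rho'$ does not work, and you misidentify the roles of both cyclotomic-freeness and the bound $n\le 5$. The most serious problem: you propose that the Frobenius of the lifted Breuil--Kisin module be \emph{block-diagonal} with respect to the filtration. Such a lift has semisimple reduction, so it reduces to $\overline{\rho}^{\mathrm{ss}}$, not to $\overline{\rho}$. The entire difficulty is to lift the extension classes, and ``controlling the ambiguity in the choice of lattice above $\overline{\mathfrak{M}}$'' does not address this. There is also no ``shape classification'' of off-diagonal entries anywhere in the argument.

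The paper proceeds instead by a dimension count. One defines a subspace $\operatorname{Ext}^1_{\operatorname{SD}}(\overline{M}_2,\overline{M}_1)$ of strongly divisible extensions, computes its $\mathbb{F}$-dimension explicitly in terms of the weights (Proposition~\ref{DIMFORM}), and observes that the answer matches Nekov\'a\v{r}'s formula for $\dim_E H^1_f(G_K,\operatorname{Hom}(V_2,V_1))$. Since Gee--Liu--Savitt forces crystalline extensions to reduce into $\operatorname{Ext}^1_{\operatorname{SD}}$, the reduction map is surjective for dimension reasons, so every strongly divisible extension lifts to a crystalline one (Proposition~\ref{liftingextThm}); iterating gives an ``obvious'' crystalline lift $\rho'$. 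Cyclotomic-freeness is not a constraint on $u$-adic valuations: it is used to show (i) that the $G_{K_\infty}$-stable lattice sitting inside a crystalline $E$-extension is actually $G_K$-stable (Corollary~\ref{stable}), so that the reduction map above is well-defined with the correct target, and (ii) that the resulting $G_{K_\infty}$-isomorphism $\rho'\otimes\mathbb{F}\cong\overline{\rho}$ upgrades to a $G_K$-isomorphism (Theorem~\ref{ff}). Finally, the hypothesis $K=\mathbb{Q}_p$, $n\le 5$ has nothing to do with the extension step, which works in every dimension; it enters only when lifting the \emph{irreducible} pieces. One shows by explicit matrix computation (Proposition~\ref{Qpcase}) that an irreducible strongly divisible module of rank $\le 4$ over $\mathbb{Q}_p$ is induced from rank one over an unramified extension, hence admits a crystalline lift induced from a character; the remaining case where $\overline{\rho}$ itself is irreducible of dimension $5$ is handled separately by citing \cite{B18}.
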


In particular the question has the answer yes for $\overline{\rho}$ as in Theorem~\ref{thma}. As we shall explain below, the assumptions $K = \mathbb{Q}_p$ and $n \leq 5$ ensure that certain irreducible semilinear objects have a particularly simple form and admit crystalline lifts. Beyond these low dimensional cases the situation is more complicated; this is what prevents us from answering the question in greater generality. However, when the Jordan--Holder factors of $\overline{r}$ are one-dimensional this issue does not arise and we are also able to prove:
\begin{theoremA}\label{thmB}
	Suppose $K/\mathbb{Q}_p$ is unramified. Let $\overline{\rho}\colon G_K \rightarrow \operatorname{GL}_n(\overline{\mathbb{F}})$ be continuous and cyclotomic-free. Suppose there exists a crystalline representation $\rho \colon G_K \rightarrow \operatorname{GL}_n(\overline{\mathbb{Z}}_p)$ with $\overline{\rho} \cong \rho \otimes_{\overline{\mathbb{Z}}_p} \overline{\mathbb{F}}_p$ and with $\tau$-Hodge--Tate weights in $[0,p]$ for each $\tau \in \operatorname{Hom}_{\mathbb{F}_p}(k,\overline{\mathbb{F}}_p)$. 
	
	If every Jordan--Holder factor of $\overline{\rho}$ is one-dimensional then there exists a potentially diagonalisable crystalline representation $\rho'$ with $\overline{\rho} \cong \rho' \otimes_{\overline{\mathbb{Z}}_p} \overline{\mathbb{F}}_p$ and with $\tau$-Hodge--Tate weights equal to those of $\rho$ for every $\tau \in \operatorname{Hom}_{\mathbb{F}_p}(k,\overline{\mathbb{F}}_p)$.
\end{theoremA}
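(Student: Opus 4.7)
The guiding idea is to produce a crystalline lift $\rho'$ of $\overline{\rho}$ which is \emph{ordinary}, in the sense that it admits a complete $G_K$-stable filtration whose graded pieces are crystalline characters with the prescribed $\tau$-Hodge--Tate weights. Any such $\rho'$ is potentially diagonalisable: after restriction to a sufficiently large extension $K'/K$ the characters appearing as graded pieces become unramified twists of integer powers of the cyclotomic character, and one may deform the extension classes to zero inside the appropriate crystalline deformation ring, linking $\rho'$ to a direct sum of crystalline characters. Since every Jordan--Holder factor of $\overline{\rho}$ is one-dimensional, one can fix a complete filtration $0 = \overline{\rho}_0 \subset \cdots \subset \overline{\rho}_n = \overline{\rho}$ with rank-one graded quotients $\overline{\chi}_1,\ldots,\overline{\chi}_n$, and the target is to build an ordinary lift whose graded pieces reduce to the $\overline{\chi}_i$ in this order, with the $\tau$-Hodge--Tate weights of $\rho'$ matching those of $\rho$ for every $\tau$.

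Translating through the theory of Breuil--Kisin modules, which is well behaved for $K/\mathbb{Q}_p$ unramified and $E$-height at most $p$, this amounts to constructing a Breuil--Kisin module $\mathfrak{M}'$ of the correct shape together with a complete filtration by rank-one Breuil--Kisin submodules whose reduction mod $p$ realises the prescribed filtration of $\overline{\rho}$. I would construct $\mathfrak{M}'$ inductively: having lifted the first $i$ graded pieces to a Breuil--Kisin module $\mathfrak{M}'_i$ with specified $\tau$-shapes, the next step is to realise the extension class of $\overline{\chi}_{i+1}$ by the reduction of $\mathfrak{M}'_i$ inside the space of crystalline extensions of a rank-one Breuil--Kisin lift of $\overline{\chi}_{i+1}$. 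The freedom in choosing the $\tau$-shape of the new rank-one piece, subject to the global constraint that the multiset of rank-one pieces carries exactly the $\tau$-Hodge--Tate weights of $\rho$, is essential: over unramified $K$ the rank-one Breuil--Kisin modules are parametrised by tuples of weights in $[0,p]$ indexed by embeddings, and existence of the original lift $\rho$ ensures that such a distribution of weights among the characters $\overline{\chi}_i$ is available.

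The core difficulty is showing that the natural map from crystalline extension classes to mod $p$ extension classes surjects onto the class specified by $\overline{\rho}$ at each inductive step. This is where the cyclotomic-free condition enters decisively: in the spirit of the $n=2$ analysis of Gee--Liu--Savitt, the failure of surjectivity of this reduction map is controlled by a cohomological obstruction involving a cyclotomic sub-quotient, and Definition~\ref{cycfreecondition} is designed precisely to preclude this obstruction uniformly in the induction. The main technical step is therefore a comparison of the dimension of the crystalline $\operatorname{Ext}^1$, computed via Breuil--Kisin modules and summed over embeddings $\tau \in \operatorname{Hom}_{\mathbb{F}_p}(k,\overline{\mathbb{F}}_p)$, with the mod $p$ $\operatorname{Ext}^1$ that $\overline{\rho}$ sits inside; once this surjectivity is in hand, the induction produces the required ordinary $\mathfrak{M}'$, and the associated $\rho'$ has all the claimed properties.
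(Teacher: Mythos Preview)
Your outline has the right architecture --- inductive lifting through Breuil--Kisin modules, a dimension comparison between crystalline and mod~$p$ extension groups, and the cyclotomic-free hypothesis as the ingredient that makes the comparison go through --- and this is indeed how the paper proceeds. But there is a genuine gap in your sketch, centred on the sentence ``existence of the original lift $\rho$ ensures that such a distribution of weights among the characters $\overline{\chi}_i$ is available''. This is exactly the hard point, and it is not resolved by a freedom-of-choice argument.

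What is missing is the Gee--Liu--Savitt structure theorem (Theorem~\ref{GLS}). The paper does not try to lift $\overline{\rho}$ directly; it lifts the mod~$p$ Breuil--Kisin module $\overline{M} = M(\rho)\otimes_{\mathcal{O}}\mathbb{F}$. The point of GLS is that, because $\operatorname{HT}_\tau(\rho)\subset[0,p]$ and $K/\mathbb{Q}_p$ is unramified, $\overline{M}$ is \emph{strongly divisible} and $\operatorname{Weight}_\tau(\overline{M})=\operatorname{HT}_\tau(\rho)$. Strong divisibility is inherited by subquotients (Proposition~\ref{SDexact}), so the rank-one pieces of $\overline{M}$ already carry the correct weights: there is no distribution to choose. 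More importantly, strong divisibility is what cuts down the target of the reduction map from the full $\operatorname{Ext}^1_{\mathbb{F}}(\overline{M}_2,\overline{M}_1)$ to the subspace $\operatorname{Ext}^1_{\operatorname{SD}}(\overline{M}_2,\overline{M}_1)$, and it is only against this smaller space that the dimension count (Proposition~\ref{DIMFORM} versus the Bloch--Kato formula for $H^1_f$) comes out as an equality. Your phrase ``the mod $p$ $\operatorname{Ext}^1$ that $\overline{\rho}$ sits inside'' does not name this subspace, and without it the surjectivity you want is false in general.

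A second, smaller correction: cyclotomic-freeness is used \emph{twice}, and neither use is quite the cohomological obstruction you describe. First, in the lifting-extensions step (Proposition~\ref{liftingextThm}) it is used via Corollary~\ref{stable} to show that a $G_{K_\infty}$-stable lattice inside a crystalline $E$-representation is automatically $G_K$-stable; this is what guarantees the integral Breuil--Kisin extension one writes down is genuinely $M(T)$ for a crystalline $T$. Second, at the very end one only knows $\rho'\otimes\mathbb{F}\cong\overline{\rho}$ as $G_{K_\infty}$-representations (since $T(\overline{M})$ only remembers the $G_{K_\infty}$-action), and Theorem~\ref{ff} is invoked to upgrade this to a $G_K$-isomorphism.
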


This gives a more general version of the main results of \cite{Gao17a,Gao17b}.

We now explain how potentially diagonalisable lifts may be produced. Every mod~$p$ representation is a successive extension of irreducible representations, each of which is induced  over an unramified extension of $K$ from a character. Thus the standard method for producing potentially diagonalisable lifts is to consider lifts obtained in the same way, by taking successive crystalline extensions of irreducible representations obtained by inducing crystalline characters. Such lifts are called obvious lifts (following terminology introduced in \cite[Subsection 7.1]{GHS}) and it is straightforward to see that obvious lifts are potentially diagonalisable (see the beginning of Section~\ref{section7}). In both Theorem~\ref{thma} and~\ref{thmB} the $\rho'$ constructed will be obvious lifts.

There are two issues with this method of producing obvious lifts. Firstly, it is probably not true that every mod~$p$ representation has an obvious lift. Recent work of Matthew Emerton and Toby Gee shows, using geometric methods, that potentially diagonalisable lifts can always be obtained, but their methods involve an inductive process where obvious lifts are allowed to vary inside irreducible components of deformation rings. We avoid this problem by restricting attention to cyclotomic-free representations. Secondly, even if one can produce obvious lifts, the Hodge--Tate weights of such lifts seem to be very restrictive. They depend upon $\overline{\rho}$ in a non-obvious way, and it is unclear that if $\overline{\rho}$ has a crystalline lift of some weight then it will be possible to construct an obvious lift of the same weight.

In this paper we resolve the second issue, not by producing obvious crystalline lifts of $\overline{\rho}$, but instead by producing lifts of a semilinear object related to $\overline{\rho}$. More precisely, if $\overline{\rho}$ admits a crystalline lift then Kisin's work in integral $p$-adic Hodge theory \cite{Kis06} associates to this lift a Breuil--Kisin module. We produce obvious crystalline lifts of the mod~$p$ reduction of this Breuil--Kisin module (i.e. we produce obvious lifts whose associated Breuil--Kisin module is congruent modulo~$p$ to the one arising from the previous crystalline lift). Our assumption that $\overline{\rho}$ is cyclotomic-free means that this obvious lift will also be an obvious lift of $\overline{\rho}$. The key ingredient which makes this possible is a theorem of Gee--Liu--Savitt which says that the Breuil--Kisin module of a crystalline representation with Hodge--Tate weights contained in $[0,p]$ is of a particularly nice form; in particular it's reduction modulo~$p$ sees the Hodge--Tate weights of the crystalline representation it was obtained from.

We conclude our introduction by explaining the content of this paper. Section~\ref{section1} discusses the cyclotomic-freeness condition and its consequences. The main result is that, if $K_\infty$ is the extension of $K$ obtained by adjoining a compatible system of $p$-th power roots of a uniformiser of $K$, then any $G_{K_\infty}$-equivariant morphism between cyclotomic-free $G_K$-representations is $G_K$-equivariant.

In Section~\ref{section3} we recall the notion of a Breuil--Kisin module. We state Kisin's construction which associates a Breuil--Kisin module to a crystalline representation and the result of Gee--Liu--Savitt which controls the shape of such Breuil--Kisin modules. We also recall from \cite{B18} the notion of strong divisibility for $p$-torsion Breuil--Kisin modules, and some of the properties such modules satisfy.

In Section~\ref{section4} we compute the space of extensions of strongly divisible Breuil--Kisin modules. The dimension is closely related to the dimension of crystalline extensions in characteristic zero, and in Section~\ref{section5} we use this to show that extensions between strongly divisible Breuil--Kisin modules admit lifts by crystalline extensions.

The previous two sections reduce the problem of lifting strongly divisible Breuil--Kisin modules to that of lifting irreducible such modules. In general the structure of such modules is complicated, and we do not know how to produce such lifts. However we show in Section~\ref{section6}, by explicit computation, that in low dimensional situations (when $K = \mathbb{Q}_p$ and $n \leq 4$) their structure can be controlled so that crystalline lifts can be produced. In the final section we put these results together to prove the theorems stated above.

\subsection*{Acknowledgements}

Part of this work was done during my PhD and I would like to thank my advisor Fred Diamond for his guidance and support. I would also like to thank Matthew Bisatt for helpful conversations.

This work was partly supported by the Engineering and Physical Sciences Research Council
[EP/L015234/1] and the EPSRC Centre for Doctoral Training in Geometry and Number
Theory (The London School of Geometry and Number Theory), University College London.
\section{Cyclotomic-free representations}\label{section1}

For this section let $K/\mathbb{Q}_p$ be any finite extension. Let $K_\infty = K(\pi^{1/p^\infty})$ where $\pi^{1/p^\infty}$ is a fixed choice of compatible system of $p$-th power roots of a uniformiser $\pi \in K$. Our aim is to understand restriction from $G_K$ to $G_{K_\infty}$ for a class of representations we call cyclotomic-free. 

If $G$ is a topological group denote by $\operatorname{Rep}(G)$ the category of continuous representations of $G$ on finite dimensional $\overline{\mathbb{F}}_p$-vector spaces. In this section all unadorned tensor products are over $\overline{\mathbb{F}}_p$.

\subsection{Cyclotomic-free representations} 
	For any field $\mathbb{F}$ of characteristic $p$ let $\mathbb{F}(1)$ denote the $G_K$-representation whose underlying vector space is $\mathbb{F}$, with $G_K$-action given by the mod~$p$ cyclotomic character $\chi_{\operatorname{cyc}}$.
\begin{definition}\label{cycfreecondition}
	$V \in \operatorname{Rep}(G_K)$ is cyclotomic-free if $V$ admits a composition series $0 = V_n \subset \ldots \subset V_0 =V$ such that $V_i/V_{i+1} \otimes \overline{\mathbb{F}}_p(1)$ is not a Jordan--Holder factor of $V_{i+1}$ for any $i$. Pictorially
	$$
	V \sim 
	\begin{pmatrix}
	\ddots & * & * \\   0 & V_1/V_2 & * \\  0 & 0 & V_0/V_1
	\end{pmatrix}
	$$
	and we ask that for each $i$ no block above $V_i/V_{i+1}$ is isomorphic to $V_i/V_{i+1} \otimes \overline{\mathbb{F}}_p(1)$. In particular cyclotomic-freeness is ruling out representations of the form $\big(\begin{smallmatrix}
	\chi_{\operatorname{cyc}} & * \\ 0 & 1
	\end{smallmatrix} \big)$.
\end{definition}

Note in the definition of cyclotomic-freeness we require that one composition series of $V$ satisfies the conditions describes in \eqref{cycfreecondition}, not that every one does.
\begin{lemma}\label{quotient}
	If $V \in \operatorname{Rep}(G_K)$ is cyclotomic-free then any subquotient of $V$ is cyclotomic-free also.
\end{lemma}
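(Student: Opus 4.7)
The plan is to treat the subrepresentation case and the quotient case separately; an arbitrary subquotient $U/T$ (with $T \subset U \subset V$) then follows by first restricting to $U$ and then passing to the quotient $U \twoheadrightarrow U/T$.

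Fix a composition series $0 = V_n \subset V_{n-1} \subset \cdots \subset V_0 = V$ witnessing cyclotomic-freeness of $V$, with irreducible graded pieces $W_i := V_i/V_{i+1}$; thus $W_j \not\cong W_i \otimes \overline{\mathbb{F}}_p(1)$ whenever $j > i$. For a subrepresentation $U \subset V$ I would consider the induced filtration $U_i := U \cap V_i$; the natural map $U_i/U_{i+1} \hookrightarrow W_i$ is injective, so by irreducibility of $W_i$ each $U_i/U_{i+1}$ is either zero or isomorphic to $W_i$. Deleting the zero steps yields a composition series of $U$ whose Jordan--H\"older factors form an ordered subsequence of $(W_0,\ldots,W_{n-1})$. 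Dually, for a quotient $V \twoheadrightarrow V/T$, I would take the filtration $\overline{V}_i := (V_i+T)/T$; its graded pieces are quotients of $W_i$, hence again either zero or $\cong W_i$, giving an analogous ordered subsequence composition series of $V/T$.

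The cyclotomic-free condition is manifestly inherited under passage to any ordered subsequence of composition factors: the non-isomorphism $W_j \not\cong W_i \otimes \overline{\mathbb{F}}_p(1)$ for $j>i$ in the full list is preserved when one restricts attention to a sublist with the induced ordering. Combining the two cases gives cyclotomic-freeness of $U/T$. I do not anticipate any real obstacle here; the only point to verify is that intersecting (respectively projecting onto) an irreducible graded piece yields either zero or the entire piece, which is immediate from irreducibility.
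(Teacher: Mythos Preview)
Your proof is correct and follows essentially the same approach as the paper: both arguments intersect (respectively project) the witnessing composition series to obtain a filtration on the sub (respectively quotient) whose graded pieces are either zero or isomorphic to the corresponding $W_i$, and then re-index to obtain a composition series whose factors form an ordered subsequence of the original. Your phrasing of the final step (that the condition $W_j \not\cong W_i \otimes \overline{\mathbb{F}}_p(1)$ for $j>i$ is inherited by any ordered sublist) is slightly more explicit than the paper's, but the content is the same.
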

\begin{proof}
	Suppose $f\colon V \rightarrow W$ is surjective and let $(V_i)_i$ be a composition series as in Definition~\ref{cycfreecondition}. Set $W_i = f(W_i)$. Then $f$ induces surjective maps $V_i/V_{i+1} \rightarrow W_i/W_{i+1}$. Thus $W_i/W_{i+1} \cong V_i/V_{i+1}$ or $W_i/W_{i+1} =0$. Thus, after re-indexing, the $(W_i)_i$ form a composition series as in Definition~\ref{cycfreecondition}. On the other hand, if $W \subset V$ is a $G_K$-stable subspace set $W_i = W \cap V_i$. Then $W_i/W_{i+1} \hookrightarrow V_i/V_{i+1}$ and so either $W_i/W_{i+1} \cong V_i/V_{i+1}$ or is zero. Again after re-indexing the $(W_i)_i$ form a composition series as in Definition~\ref{cycfreecondition}.
\end{proof}
On the other hand the class of cyclotomic-free representations is not closed under extensions; if the mod~$p$ cyclotomic character is trivial it is not even closed under direct sums.

The main result of this section is:
\begin{theorem}\label{ff}
	Let $V$ and $W$ be cyclotomic-free $G_K$-representations and $f\colon V \rightarrow W$ a morphism of $G_{K_\infty}$-representations. Then $f$ is $G_K$-equivariant.
\end{theorem}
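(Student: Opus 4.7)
Proof proposal:

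The plan is to proceed by induction on $\dim V + \dim W$, using the cyclotomic-free filtration of $V$ (or, symmetrically, of $W$).

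\textbf{Base case: $V, W$ both irreducible.} Every mod-$p$ irreducible $G_K$-representation is induced from a character: $V \cong \operatorname{Ind}_{G_{K'}}^{G_K}(\chi)$ for $K'/K$ the unramified extension of degree $\dim V$ and $\chi$ a character whose Frobenius conjugates are pairwise distinct. A standard Mackey/Frobenius reciprocity computation identifies both $\operatorname{Hom}_{G_K}(V, W)$ and $\operatorname{Hom}_{G_{K_\infty}}(V, W)$ with dimensions counting matches of Frobenius conjugates of the inducing characters, tested on $G_{K'}$ (resp.\ $G_{K' K_\infty}$) after passing to a common unramified extension. Since $\chi, \psi$ take values in the prime-to-$p$ group $\overline{\mathbb{F}}_p^\times$ and $K'K_\infty/K'$ is pro-$p$, restriction is faithful on such characters, so the two dimensions coincide---the cyclotomic-free hypothesis is not needed here.

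\textbf{Inductive step.} Suppose WLOG $V$ has composition length $\geq 2$. Let $V_1 \subset V$ be the subrepresentation in the cyclotomic-free filtration such that $U := V/V_1$ is the irreducible top piece. By Lemma~\ref{quotient}, $V_1$ is cyclotomic-free, and by induction on the pair $(V_1, W)$ (strictly smaller total dimension), $f|_{V_1}$ is $G_K$-equivariant. Hence $W' := f(V_1)$ is $G_K$-stable in $W$ and cyclotomic-free (Lemma~\ref{quotient}), and $f$ descends to a $G_{K_\infty}$-equivariant map $\bar f : U \to W/W'$; by induction on $(U, W/W')$, $\bar f$ is $G_K$-equivariant.

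With $f|_{V_1}$ and $\bar f$ now $G_K$-equivariant, the obstruction to $f$ itself being $G_K$-equivariant is an element of $\operatorname{Ext}^1_{G_K}(U, W')$ which vanishes in $\operatorname{Ext}^1_{G_{K_\infty}}(U, W')$ (since $f$ is a $G_{K_\infty}$-equivariant lift of the data). The critical step is to show the restriction map $\operatorname{Ext}^1_{G_K}(U, W') \to \operatorname{Ext}^1_{G_{K_\infty}}(U, W')$ is injective. The prototypical obstruction to such injectivity is the Kummer class of the uniformizer $\pi$---a nontrivial class in $H^1(G_K, \overline{\mathbb{F}}_p(1))$ that restricts to zero on $G_{K_\infty}$ since $K_\infty$ contains the $p$-power roots of $\pi$---so injectivity can only fail when $U \otimes \overline{\mathbb{F}}_p(1)$ appears among the JH factors of $W'$. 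Cyclotomic-freeness of $V$ precisely rules this out: the JH factors of $W'$ (being JH factors of $V_1$) do not include $U \otimes \overline{\mathbb{F}}_p(1)$. Granting the injectivity, a $G_K$-equivariant lift $f'$ exists; the difference $f - f' : U \to W'$ is $G_{K_\infty}$-equivariant, and by induction on $(U, W')$ (whose total dimension $\dim U + \dim W' \leq \dim V < \dim V + \dim W$) is $G_K$-equivariant. Hence $f = f' + (f - f')$ is $G_K$-equivariant. The principal obstacle is the precise verification of the Ext injectivity from the cyclotomic-free condition, which likely proceeds via the inflation-restriction sequence for the normal subgroup $G_L \triangleleft G_K$ where $L = K_\infty \cdot K(\mu_{p^\infty})$, reducing the question to explicit identification of the Kummer-type classes that die on restriction.
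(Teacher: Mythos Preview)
Your inductive framework is essentially the paper's: both argue by d\'evissage on a cyclotomic-free composition series of $V$, reduce to the case where $f|_{V_1}$ and the induced map on quotients are already $G_K$-equivariant, and then identify the remaining obstruction as a class in $\operatorname{Ext}^1_{G_K}(U,W')$ that dies under restriction to $G_{K_\infty}$. The paper additionally reduces to $f|_{V_1}$ injective and $f$ surjective before invoking the $\operatorname{Ext}$ comparison, but this is cosmetic. One small point: your ``WLOG $V$ has composition length $\geq 2$'' is not justified by symmetry, since cyclotomic-freeness is not self-dual; the case $V$ irreducible and $W$ reducible needs its own short argument (this is the paper's Lemma~\ref{inductivestep}), though it again rests on the same $\operatorname{Ext}^1$ injectivity.

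The genuine gap is exactly the step you flag as the ``principal obstacle'': proving that
\[
H^1(G_K,\operatorname{Hom}(U,W')) \longrightarrow H^1(G_{K_\infty},\operatorname{Hom}(U,W'))
\]
is injective when $U$ is irreducible and $U\otimes\overline{\mathbb{F}}_p(1)$ is not a Jordan--H\"older factor of $W'$. Your proposed route via inflation--restriction for $G_L \triangleleft G_K$ with $L = K_\infty \cdot K(\mu_{p^\infty})$ does not get there: that sequence compares $H^1(G_K,-)$ with $H^1(G_L,-)$, not with $H^1(G_{K_\infty},-)$, and $G_{K_\infty}$ is not normal in $G_K$. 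The paper's method is different. First one reduces by d\'evissage in $W'$ to the case where $W'$ is also irreducible (this is Lemma~\ref{devissage}, and it needs the companion statement that $H^0$'s agree, which you use implicitly when matching $f-f'$). Then, writing both irreducibles as inductions of characters from unramified extensions and applying Shapiro's lemma together with Mackey decomposition, one rewrites $\operatorname{Hom}(U,W')$ as a direct sum of representations induced from one-dimensional pieces (Lemma~\ref{injective}). This reduces the injectivity to the rank-one statement that $H^1(G_L,\chi)\to H^1(G_{L_\infty},\chi)$ is injective unless $\chi\cong\overline{\mathbb{F}}_p(1)$, which is taken from \cite[Lemma~5.4.2]{GLS15}. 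The Kummer-class intuition you describe is correct, but it is this Shapiro/Mackey reduction---not inflation--restriction---that turns it into a proof.
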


We give a proof over the next three subsections.
\subsection{Restriction for irreducibles}\label{tame} Let $K^{\operatorname{t}}$ be the maximal tamely ramified extension of $K$. Since $K_\infty$ is totally wildly ramified $K_\infty \cap K^{\operatorname{t}} = K$. Galois theory then tells us the restriction map
$$
 \operatorname{Gal}(K^{\operatorname{t}}_\infty/ K_\infty) \rightarrow \operatorname{Gal}(K^{\operatorname{t}}/K)
$$
where $K_\infty^{\operatorname{t}} = K_\infty K^{\operatorname{t}}$, is an isomorphism. By \cite[Proposition 4]{Serre72} the action of $G_K$ on any semi-simple object of $\operatorname{Rep}(G_K)$ factors through $\operatorname{Gal}(K^{\operatorname{t}}/K)$. Likewise the $G_{K_\infty}$-action on any semi-simple object of $\operatorname{Rep}(G_{K_\infty})$ factors through $\operatorname{Gal}(K_\infty^{\operatorname{t}}/K_\infty)$. This proves:
\begin{lemma}\label{equiv}
	Restriction induces an equivalence between the category of semi-simple $V \in \operatorname{Rep}(G_K)$ and the category of semi-simple $V \in \operatorname{Rep}(G_{K_\infty})$.
\end{lemma}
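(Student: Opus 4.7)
The bulk of the work for Lemma~\ref{equiv} has already been assembled in the paragraph preceding its statement; the plan is simply to package those ingredients into a proof that restriction is both fully faithful and essentially surjective on the semi-simple subcategories.

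First I would observe that for any semi-simple $V \in \operatorname{Rep}(G_K)$, the action factors through $\operatorname{Gal}(K^{\operatorname{t}}/K)$ by \cite{Serre72}, and similarly any semi-simple $V \in \operatorname{Rep}(G_{K_\infty})$ factors through $\operatorname{Gal}(K^{\operatorname{t}}_\infty/K_\infty)$. The commutative diagram
\[
\begin{CD}
G_{K_\infty} @>>> G_K \\
@VVV @VVV \\
\operatorname{Gal}(K^{\operatorname{t}}_\infty/K_\infty) @>>\sim> \operatorname{Gal}(K^{\operatorname{t}}/K)
\end{CD}
\]
(whose bottom arrow is the isomorphism from the paper, and whose commutativity is just the functoriality of restriction) then identifies the restriction functor on semi-simple objects with the pullback along an isomorphism of groups.

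For full faithfulness, given semi-simple $V,W \in \operatorname{Rep}(G_K)$ and an $\overline{\mathbb{F}}_p$-linear map $f\colon V \to W$, being $G_K$-equivariant is equivalent to being $\operatorname{Gal}(K^{\operatorname{t}}/K)$-equivariant (since both actions factor through that quotient), and being $G_{K_\infty}$-equivariant is equivalent to being $\operatorname{Gal}(K^{\operatorname{t}}_\infty/K_\infty)$-equivariant. Via the bottom isomorphism of the diagram, these two conditions coincide, so $f$ is $G_K$-equivariant if and only if it is $G_{K_\infty}$-equivariant. For essential surjectivity, given a semi-simple $V \in \operatorname{Rep}(G_{K_\infty})$, factor its action through $\operatorname{Gal}(K^{\operatorname{t}}_\infty/K_\infty)$, transport along the inverse of the horizontal isomorphism, and inflate via $G_K \twoheadrightarrow \operatorname{Gal}(K^{\operatorname{t}}/K)$; commutativity of the square ensures this $G_K$-representation restricts to $V$.

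The proof is essentially formal given the two inputs (the isomorphism $\operatorname{Gal}(K^{\operatorname{t}}_\infty/K_\infty) \xrightarrow{\sim} \operatorname{Gal}(K^{\operatorname{t}}/K)$ and Serre's theorem), so I do not expect any genuine obstacle; the only thing worth being careful about is to note that the resulting $G_K$-representation is automatically semi-simple since semi-simplicity is detected by the action of the (common) tame quotient, and to record that the inverse functor described in the essential surjectivity step is a genuine quasi-inverse rather than merely a section.
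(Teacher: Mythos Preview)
Your proof is correct and follows the same approach as the paper: the paper's proof consists entirely of the paragraph preceding the lemma, and you have simply made explicit the formal packaging (full faithfulness and essential surjectivity via the identification of tame quotients) that the paper leaves to the reader with the phrase ``This proves:''. Your additional remark about semi-simplicity being preserved is a worthwhile sanity check that the paper does not spell out.
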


This implies Theorem~\ref{ff} holds when $V$ and $W$ are irreducible. It also implies that any $G_K$-composition series of $V \in \operatorname{Rep}(G_K)$ is also a $G_{K_\infty}$-composition series. In particular, if $V$ is cyclotomic-free in the sense of Definition~\ref{cycfreecondition} then $V|_{G_{K_\infty}}$ is cyclotomic-free in the following sense.

\begin{definition}[Cyclotomic-freeness for $G_{K_\infty}$-representations]\label{cycfreeGKinfty}
	$V \in \operatorname{Rep}(G_{K_\infty})$ is cyclotomic-free if $V$ admits a composition series $0 = V_n \subset \ldots \subset V_0 =V$ such that $V_i/V_{i+1} \otimes \overline{\mathbb{F}}_p(1)$ is not a Jordan--Holder factor of $V_{i+1}$.
\end{definition}

\subsection{Restriction on Galois cohomology} The following lemma is well-known. See for example \cite[Lemma 2.1.2]{B18}.
\begin{lemma}\label{irredinduced}
	Every irreducible $V \in \operatorname{Rep}(G_K)$ is isomorphic to $\operatorname{Ind}_L^K \chi$ where $L/K$ is an unramified extension and $\chi:G_L \rightarrow \overline{\mathbb{F}}_p^\times$ is a continuous character.
\end{lemma}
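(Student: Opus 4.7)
The strategy is standard Clifford theory, leveraging the characteristic of the coefficient field. Let $P_K \trianglelefteq I_K \trianglelefteq G_K$ denote wild inertia and inertia. Continuity forces $G_K$ to act on $V$ through a finite quotient, so the image of the pro-$p$ group $P_K$ is a finite $p$-group acting on an $\overline{\mathbb{F}}_p$-vector space, giving $V^{P_K} \neq 0$. Normality of $P_K$ makes $V^{P_K}$ a $G_K$-subrepresentation, so by irreducibility $V = V^{P_K}$; in particular $G_K$ acts through the tame quotient, and the image of $I_K$ is finite abelian of order prime to $p$.

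Since $\overline{\mathbb{F}}_p$ is algebraically closed, restriction to $I_K$ decomposes $V = \bigoplus_\psi V_\psi$ into character eigenspaces. Fix a character $\chi$ occurring and let $H = \operatorname{Stab}_{G_K}(\chi)$ under the conjugation action on characters of $I_K$. Then $H \supseteq I_K$, and by Galois theory $H = G_L$ for some unramified extension $L/K$. The eigenspace $V_\chi$ is $H$-stable. Since $I_K$ acts on $V_\chi$ by the scalar $\chi$, the commuting action of the procyclic quotient $H/I_K$ is determined by a single operator, which over $\overline{\mathbb{F}}_p$ has an eigenvector. This yields a one-dimensional $H$-stable line, on which $H$ acts via a character $\tilde\chi \colon G_L \to \overline{\mathbb{F}}_p^\times$ extending $\chi|_{I_K}$.

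Finally, Frobenius reciprocity produces a nonzero $G_K$-equivariant map $\operatorname{Ind}_L^K \tilde\chi \to V$, whose image is a nonzero $G_K$-subrepresentation, hence equal to $V$ by irreducibility. For the opposite inequality a dimension count suffices: the orbit $\{g\chi : g \in G_K/H\}$ consists of $[G_K : H]$ pairwise distinct characters (by definition of $H$), each contributing a nonzero $I_K$-eigenspace to $V$, so $\dim V \geq [G_K:H] = \dim \operatorname{Ind}_L^K \tilde\chi$, and the map is an isomorphism. The only genuinely non-formal step is the construction of the extension $\tilde\chi$, where algebraic closedness of $\overline{\mathbb{F}}_p$ is used essentially to diagonalise the action of $H/I_K$ on $V_\chi$; everything else is bookkeeping.
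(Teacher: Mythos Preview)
Your argument is correct and is the standard Clifford-theoretic proof of this well-known fact. The paper itself does not give a proof; it simply cites \cite[Lemma 2.1.2]{B18}.

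One small point of phrasing: you write that ``the commuting action of the procyclic quotient $H/I_K$ is determined by a single operator''. Strictly speaking $H/I_K$ does not act on $V_\chi$ unless $\chi$ is trivial; what you mean (and what makes the argument work) is that on $V_\chi$ the subgroup $I_K$ acts by scalars, so any eigenvector of a chosen Frobenius lift $\phi \in H$ spans a line stable under both $I_K$ and $\phi$, hence under the closed subgroup they generate, which is all of $H$. Equivalently, the image of $H$ in $\operatorname{GL}(V_\chi)$ is a finite group whose centre contains the image of $I_K$ and whose quotient by that image is cyclic, hence is abelian and admits a common eigenvector. Either way the conclusion stands, and the rest of your proof is clean.
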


We have written $\operatorname{Ind}_L^K$ in place of $\operatorname{Ind}_{G_L}^{G_K}$ and we continue with this notation throughout. 

If $G$ is a topological group and $V,W \in \operatorname{Rep}(G)$ write $\operatorname{Hom}(V,W) \in\operatorname{Rep}(G)$ for the representation with underlying vector space $\operatorname{Hom}_{\overline{\mathbb{F}}_p}(V,W)$ and $G$-action given by $\sigma \cdot f = \sigma \circ f \circ \sigma^{-1}$. Note that $\operatorname{Hom}(V,W) = V^\vee \otimes W$ where $V^\vee = \operatorname{Hom}(V,\overline{\mathbb{F}}_p)$. We shall use that if $W \in \operatorname{Rep}(G_L)$ and $V \in \operatorname{Rep}(G_K)$ then there are isomorphisms $\operatorname{Ind}_L^K( V|_L \otimes W) \cong V \otimes \operatorname{Ind}_L^K W$.

If $G$ is profinite we let $H^*(G,V)$ denote the continuous cohomology groups valued in $V$.
\begin{lemma}\label{injective}
	If $V,W \in \operatorname{Rep}(G_K)$ are irreducible then the restriction map
	\begin{equation}\label{restriction}
	H^1(G_K,\operatorname{Hom}(V,W)) \rightarrow H^1(G_{K_\infty},\operatorname{Hom}(V,W))
	\end{equation}
	is injective unless $W \cong V \otimes \overline{\mathbb{F}}_p(1)$.
	
\end{lemma}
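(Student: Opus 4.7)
The plan is to reduce the claim to a question about $H^1$ of characters via Shapiro's lemma, and then to compute the relevant kernel by passing to an appropriate Galois closure. By Lemma~\ref{irredinduced} write $V = \operatorname{Ind}_L^K \chi_V$ with $L/K$ finite unramified. The projection formula gives $\operatorname{Hom}(V,W) \cong \operatorname{Ind}_L^K(\chi_V^{-1}\otimes W|_{G_L})$, and Mackey's formula --- combined with $G_L \cdot G_{K_\infty} = G_K$, which holds because $L/K$ is unramified and $K_\infty/K$ totally ramified --- yields an analogous description after restriction to $G_{K_\infty}$, with $L$ replaced by $L_\infty := LK_\infty$. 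Shapiro then identifies the restriction map with
\[
H^1(G_L,\chi_V^{-1}\otimes W|_{G_L}) \to H^1(G_{L_\infty},\chi_V^{-1}\otimes W|_{G_{L_\infty}}).
\]
Decomposing $W|_{G_L}$ into irreducible $G_L$-summands and iterating Shapiro reduces the lemma to the following character statement: for every finite unramified extension $M/K$ and character $\psi\colon G_M \to \overline{\mathbb{F}}_p^\times$, the restriction $H^1(G_M,\psi) \to H^1(G_{M_\infty},\psi)$ (with $M_\infty := MK_\infty$) is injective unless $\psi = \chi_{\operatorname{cyc}}|_{G_M}$. The exceptional case corresponds, via the Frobenius-orbit structure of induced representations (both $V|_{G_L}$ and $(V\otimes \overline{\mathbb{F}}_p(1))|_{G_L}$ consist of a single $\operatorname{Gal}(L/K)$-orbit of characters), to $W \cong V \otimes \overline{\mathbb{F}}_p(1)$ as $G_K$-representations.

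For the character computation, set $\tilde M := M(\pi^{1/p^\infty},\zeta_{p^\infty}) = M_\infty(\zeta_{p^\infty})$, which is Galois over both $M$ and $M_\infty$. Since $\psi$ has order prime to $p$ it kills every pro-$p$ subgroup, so $\psi$ factors through $\Gamma := \operatorname{Gal}(\tilde M/M)$ and is trivial on the Kummer subgroup $J := \operatorname{Gal}(\tilde M/M(\zeta_{p^\infty})) \cong \mathbb{Z}_p(1)$. Using the linear disjointness $M_\infty \cap M(\zeta_{p^\infty}) = M$ (a standard consequence of the theory of strictly APF extensions), $\Gamma$ decomposes as a semidirect product $J \rtimes \Gamma_\infty$, where $\Gamma_\infty := \operatorname{Gal}(\tilde M/M_\infty) \cong \operatorname{Gal}(M(\zeta_{p^\infty})/M)$ acts on $J$ via the cyclotomic character. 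Applying inflation--restriction to both normal inclusions $G_{\tilde M} \subset G_M$ and $G_{\tilde M} \subset G_{M_\infty}$ identifies the kernel of the restriction map with the kernel of $H^1(\Gamma,\psi) \to H^1(\Gamma_\infty,\psi)$. Normalising a representative cocycle $c\colon \Gamma \to \psi$ so that $c|_{\Gamma_\infty} = 0$, it is determined by $f := c|_J \in \operatorname{Hom}(J,\psi)$; computing $c(\gamma j)$ in two ways (directly from the cocycle identity, and after rewriting $\gamma j = (\gamma j \gamma^{-1})\gamma$) gives the constraint $\psi(\gamma)f(j) = \chi_{\operatorname{cyc}}(\gamma)f(j)$ for all $\gamma \in \Gamma_\infty$ and $j \in J$. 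A nonzero such $f$ exists iff $\psi|_{\Gamma_\infty} = \chi_{\operatorname{cyc}}|_{\Gamma_\infty}$, equivalently (since both characters factor through $\Gamma_\infty$) iff $\psi = \chi_{\operatorname{cyc}}|_{G_M}$; in that exceptional case the one-dimensional kernel is spanned by the Kummer class of $\pi$.

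The subtlest input is the linear disjointness $M_\infty \cap M(\zeta_{p^\infty}) = M$ underpinning the semidirect product decomposition of $\Gamma$; granted this, the remaining ingredients --- Shapiro, Mackey, inflation--restriction, and the short cocycle calculation on a metacyclic pro-$p$-by-finite group --- are essentially formal.
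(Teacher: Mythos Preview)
Your reduction to the one-dimensional case via the projection formula, Mackey, and Shapiro is essentially the paper's argument (the paper writes $W$ rather than $V$ as an induced representation, an immaterial difference); the paper then cites \cite[Lemma~5.4.2]{GLS15} for the character statement while you supply a direct proof.

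Your direct proof contains two gaps. First, the claim that $\psi$ factors through $\Gamma = \operatorname{Gal}(\tilde M/M)$ is false: your justification would require $G_{\tilde M}$ to be pro-$p$, but it is not (it has $\hat{\mathbb{Z}}$ as an unramified quotient), and since $\tilde M/M$ is totally ramified any nontrivial unramified $\psi$ of prime-to-$p$ order is a counterexample. This is easily repaired --- if $\psi|_{G_{\tilde M}}\neq 1$ then $\psi^{G_{\tilde M}} = 0$ and inflation--restriction gives $H^1(G_M,\psi)\hookrightarrow H^1(G_{\tilde M},\psi)$, hence also into $H^1(G_{M_\infty},\psi)$ --- and the surviving case forces $\psi$ to be a power of $\chi_{\operatorname{cyc}}$. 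Second, the disjointness $M_\infty \cap M(\zeta_{p^\infty}) = M$ is not a consequence of the strictly APF property and fails for $p=2$ in general (e.g.\ $K=\mathbb{Q}_2$, $\pi=2$: then $\sqrt{2}\in K_\infty\cap K(\zeta_8)$). For $p>2$ it does hold, by a ramification argument rather than APF theory; for $p=2$ it becomes unnecessary once the first gap is fixed, since $\Gamma$ is then a pro-$2$ group, $\chi_{\operatorname{cyc}}\bmod 2$ is trivial, and the only odd-order $\psi$ factoring through $\Gamma$ is the excluded $\psi=1=\chi_{\operatorname{cyc}}$.
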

\begin{proof}
	First suppose that $\operatorname{Hom}(V,W)$ in \eqref{restriction} is replaced by $\operatorname{Ind}_L^K Z$ with $Z$ one dimensional. Recall that $\operatorname{Ind}_L^K Z$ is the vector space of continuous functions $f\colon G_K \rightarrow Z$ such that $f(hg) = h \cdot f(g)$ for all $h \in G_L, g \in G_K$. Since $G_{L_\infty} = G_{K_\infty} \cap G_L$, restriction of functions describes a map $\operatorname{Ind}_L^K Z \rightarrow \operatorname{Ind}_{L_\infty}^{K_\infty} Z$ fitting into the $G_{L_\infty}$-equivariant diagram
	$$
	\begin{tikzcd}[column sep =tiny, row sep=small]
	\operatorname{Ind}_L^K Z \arrow{r} \arrow{d} & \operatorname{Ind}_{L_\infty}^{K_\infty} Z  \arrow{d} \\
	Z \arrow{r} & Z|_{L_\infty}
	\end{tikzcd}
	$$
	The vertical arrows are evaluation at $1$. In cohomology, the vertical arrows induce isomorphisms and the horizontal arrows induce restriction, cf. \cite[Section 2.5]{Serre}. By \cite[Lemma 5.4.2]{GLS15} $H^1(G_L,Z) \rightarrow H^1(G_{L_\infty},Z)$ is injective unless $Z \cong \overline{\mathbb{F}}_p(1)$, and so the claim is also true for the restriction map $H^1(G_K,\operatorname{Ind}_L^KZ) \rightarrow H^1(G_{K_\infty},\operatorname{Ind}_L^K Z)$.
	 
	 Return to the statement of the lemma. As $W$ is irreducible $W \cong \operatorname{Ind}_L^K Z$ for a one-dimensional $Z$. The projection formula gives $\operatorname{Hom}(V,W) \cong  \operatorname{Ind}_L^K(  Z \otimes V^\vee|_L)$. Since $V$ is irreducible $V^\vee$ is irreducible also, and so $V^\vee \cong \operatorname{Ind}_F^K Y$ for a one-dimensional $Y$. Mackey's theorem implies $V^\vee|_L \cong \bigoplus_\gamma \operatorname{Ind}_{FL}^L Y^{(\gamma)}$ with $\gamma$ running over some subset of $G_L$ and where $Y^{(\gamma)} = Y$ as vector spaces with $G_{FL}$-action given by $\sigma(y) = (\gamma \sigma \gamma^{-1})(y)$. Therefore
	\begin{equation}\label{Hom=}
	\operatorname{Hom}(V,W) \cong \bigoplus_\gamma \operatorname{Ind}_L^K(Z \otimes \operatorname{Ind}_{FL}^L Y^{(\gamma)}) = \bigoplus_\gamma \operatorname{Ind}_{FL}^{K} (Z \otimes Y^{(\gamma)})
	\end{equation}
	Thus \eqref{restriction} is a direct sum of restriction maps $H^1(G_K,\operatorname{Ind}_{FL}^K (Z \otimes Y^{(\gamma)})) \rightarrow H^1(G_{K_\infty},\operatorname{Ind}_{FL}^K (Z \otimes Y^{(\gamma)}))$. By the previous paragraph, these maps are injective unless $Z \otimes Y^{(\gamma)} \cong \overline{\mathbb{F}}_p(1)$. However if this is the case then \eqref{Hom=} implies $\operatorname{Hom}(V\otimes \overline{\mathbb{F}}_p(1),W)$ has $G_K$-fixed points, i.e. $W \cong V \otimes \overline{\mathbb{F}}_p(1)$. 
\end{proof}
\begin{lemma}\label{devissage}
	Suppose $W$ is cyclotomic-free, $V$ is irreducible, and that $V \otimes \overline{\mathbb{F}}_p(1)$ is not a Jordan--Holder factor of $W$. Then
	$$
	H^i(G_K,\operatorname{Hom}(V,W)) \rightarrow H^i(G_{K_\infty},\operatorname{Hom}(V,W))
	$$
	is an isomorphism if $i =0$ and is injective if $i =1$.
\end{lemma}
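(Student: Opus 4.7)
The plan is to induct on the length of $W$, using the short exact sequence coming from the top of a composition series of $W$ witnessing cyclotomic-freeness (as in Definition~\ref{cycfreecondition}).

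\textbf{Base case.} Suppose $W$ is irreducible. For $i=0$, both $H^0(G_K,\operatorname{Hom}(V,W))$ and $H^0(G_{K_\infty},\operatorname{Hom}(V,W))$ compute $\operatorname{Hom}$ spaces of semi-simple representations, and by Lemma~\ref{equiv} restriction induces an equivalence on semi-simples, so the map is an isomorphism. For $i=1$, this is precisely Lemma~\ref{injective}: the excluded case $W \cong V \otimes \overline{\mathbb{F}}_p(1)$ is ruled out by hypothesis.

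\textbf{Inductive step.} Fix a composition series $0 = W_n \subset \cdots \subset W_0 = W$ for $W$ as in Definition~\ref{cycfreecondition}. Then $W_0/W_1$ is irreducible and $W_1$ is cyclotomic-free of strictly smaller length; indeed Lemma~\ref{quotient} shows every subquotient of $W$ inherits cyclotomic-freeness. Moreover, since the Jordan--Holder factors of $W_1$ and of $W_0/W_1$ are among those of $W$, the hypothesis that $V \otimes \overline{\mathbb{F}}_p(1)$ is not a Jordan--Holder factor of $W$ persists for both pieces. Apply the functor $\operatorname{Hom}(V,-)$ (exact, since it's just tensoring with $V^\vee$ over $\overline{\mathbb{F}}_p$) to $0 \to W_1 \to W \to W_0/W_1 \to 0$ and take the long exact sequences in continuous cohomology for $G_K$ and $G_{K_\infty}$. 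Restriction gives a commutative ladder between them. By induction, the vertical arrows at $\operatorname{Hom}(V,W_1)$ are as required (isomorphism in degree zero, injection in degree one); by the base case the same holds at $\operatorname{Hom}(V,W_0/W_1)$.

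\textbf{Diagram chase.} A direct four-term diagram chase (a mild variant of the five lemma) then gives both statements for $W$. For $i=0$ one needs isomorphisms on the two flanking $H^0$'s and injectivity on the following $H^1$ on $W_1$, all of which we have. For the injectivity at $i=1$ one uses the isomorphism at $H^0$ on $W_0/W_1$, injectivity at $H^1$ on $W_1$, and injectivity at $H^1$ on $W_0/W_1$, and chases as usual: an element of $H^1(G_K,\operatorname{Hom}(V,W))$ in the kernel of restriction projects to zero in $H^1(G_K,\operatorname{Hom}(V,W_0/W_1))$ by injectivity there, hence lifts to $H^1(G_K,\operatorname{Hom}(V,W_1))$; after modifying by an element coming from $H^0$ on $W_0/W_1$ (using the isomorphism to transfer the $G_{K_\infty}$-lift back to $G_K$) it becomes zero in $H^1(G_{K_\infty},\operatorname{Hom}(V,W_1))$, and injectivity there forces it to vanish.

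\textbf{Anticipated difficulty.} All the real work is already concentrated in Lemma~\ref{injective} and Lemma~\ref{equiv}; the devissage itself is formal. The only thing to verify carefully is that the cyclotomic-freeness hypothesis really descends to both $W_1$ and $W_0/W_1$ together with the condition on $V \otimes \overline{\mathbb{F}}_p(1)$, and this is handled by Lemma~\ref{quotient} combined with the fact that Jordan--Holder factors of sub and quotient are Jordan--Holder factors of $W$.
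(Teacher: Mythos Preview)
Your proof is correct and follows essentially the same approach as the paper: both induct on the length of $W$, split off the irreducible quotient from a cyclotomic-free composition series, and run the four-lemma/five-lemma chase on the resulting commutative ladder of long exact sequences, with the base case supplied by Lemma~\ref{equiv} and Lemma~\ref{injective}.
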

\begin{proof}
	Induct on the length of $W$. If $W$ is irreducible then the claim follows from Lemma~\ref{equiv} and Lemma~\ref{injective}. If $W$ is not irreducible we can fit $W$ into an exact sequence $0 \rightarrow W_1 \rightarrow W \rightarrow W_2 \rightarrow 0$ where $W_1$ is cyclotomic-free, $W_2$ is irreducible and non-zero, and where $W_2 \otimes \overline{\mathbb{F}}_p(1)$ not a Jordan--Holder factor of $W_1$. Passing to cohomology gives the diagram
		$$
		\hspace*{-1cm}
		\begin{tikzcd}[column sep =tiny, row sep=small]
		H^0(G_K,\operatorname{Hom}(V,W_2)) \arrow{r} \arrow{d} & H^1(G_K,\operatorname{Hom}(V,W_1)) \arrow{r} \arrow{d} & H^1(G_K,\operatorname{Hom}(V,W)) \arrow{r} \arrow{d} & H^1(G_K,\operatorname{Hom}(V,W_2)) \arrow{d} \\
		H^0(G_{K_\infty},\operatorname{Hom}(V,W_2)) \arrow{r}  &H^1(G_{K_\infty},\operatorname{Hom}(V,W_1)) \arrow{r}  & H^1(G_{K_\infty},\operatorname{Hom}(V,W)) \arrow{r}  & H^1(G_{K_\infty},\operatorname{Hom}(V,W_2))
		\end{tikzcd}
		$$
		which commutes and has exact rows. By the inductive hypothesis the first vertical arrow is an isomorphism, and the second and fourth are injective. A diagram chase shows that the third is injective also. Similarly we obtain the diagram
		$$
		\hspace*{-1cm}
		\begin{tikzcd}[column sep =tiny, row sep=small]
		H^0(G_K,\operatorname{Hom}(V,W_1)) \arrow{r} \arrow{d} & H^0(G_K,\operatorname{Hom}(V,W)) \arrow{r} \arrow{d} & H^0(G_K,\operatorname{Hom}(V,W_2)) \arrow{r} \arrow{d} & H^1(G_K,\operatorname{Hom}(V,W_1)) \arrow{d} \\
		H^0(G_{K_\infty},\operatorname{Hom}(V,W_1) \arrow{r}  & H^0(G_{K_\infty},\operatorname{Hom}(V,W)) \arrow{r}  & H^0(G_{K_\infty},\operatorname{Hom}(V,W_2)) \arrow{r}  & H^1(G_{K_\infty},\operatorname{Hom}(V,W_1))
		\end{tikzcd}
		$$
		which commutes and has exact rows. Again the inductive hypothesis implies the first and third vertical arrows are isomorphisms, and the fourth is injective. A diagram chase shows the second is an isomorphism, which finishes the proof. 
\end{proof}
\begin{corollary}\label{consequences}
	Let $V$ and $W$ be as in Lemma~\ref{devissage} and let $0 \rightarrow W \rightarrow Z \rightarrow V \rightarrow 0$ be an exact sequence in $\operatorname{Rep}(G_K)$. Then any $G_{K_\infty}$-equivariant splitting $s\colon V \rightarrow Z$ of this sequence is $G_K$-equivariant.
\end{corollary}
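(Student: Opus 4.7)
The plan is to translate the existence of a splitting into cohomological data and invoke Lemma~\ref{devissage} twice: once to produce some $G_K$-equivariant splitting at all, and once to identify the given $G_{K_\infty}$-splitting with it.

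More precisely, the extension class of $0 \to W \to Z \to V \to 0$ lives in $\operatorname{Ext}^1_{G_K}(V,W) = H^1(G_K, \operatorname{Hom}(V,W))$. Since $s$ is a $G_{K_\infty}$-equivariant splitting, the restriction of this class to $H^1(G_{K_\infty}, \operatorname{Hom}(V,W))$ vanishes. Lemma~\ref{devissage} tells us the restriction map in degree $1$ is injective, so the original class in $H^1(G_K, \operatorname{Hom}(V,W))$ is already zero. Hence there exists a $G_K$-equivariant splitting $s' \colon V \to Z$ of the sequence.

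Now compare $s$ with $s'$. Both are set-theoretic splittings of $Z \twoheadrightarrow V$, so their difference $s - s'$ takes values in $W$ and is therefore an $\overline{\mathbb{F}}_p$-linear map $V \to W$. Since $s$ is $G_{K_\infty}$-equivariant and $s'$ is $G_K$-equivariant (a fortiori $G_{K_\infty}$-equivariant), the difference $s - s'$ defines a class in $H^0(G_{K_\infty}, \operatorname{Hom}(V,W))$. Lemma~\ref{devissage} asserts that restriction is an isomorphism in degree $0$, so $s - s'$ is in fact $G_K$-equivariant. Then $s = s' + (s - s')$ is a sum of two $G_K$-equivariant maps and therefore $G_K$-equivariant itself.

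There is no real obstacle beyond correctly packaging the two applications of Lemma~\ref{devissage}: the injectivity in $H^1$ upgrades the existence of \emph{some} $G_{K_\infty}$-splitting to the existence of a $G_K$-splitting, and the isomorphism in $H^0$ ensures that any two $G_{K_\infty}$-splittings differ by a $G_K$-equivariant map. Together they force $s$ to be $G_K$-equivariant, which is exactly the desired conclusion (and incidentally establishes Theorem~\ref{ff} by a straightforward induction on composition length).
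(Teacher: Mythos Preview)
Your proof is correct and follows exactly the paper's argument: use injectivity of restriction on $H^1$ to obtain a $G_K$-splitting $s'$, then use the $H^0$ isomorphism to show $s-s'$ is $G_K$-equivariant. The only caveat is your final parenthetical remark: the induction proving Theorem~\ref{ff} requires more care than you suggest (see the paper's Lemma~\ref{inductivestep} and the subsequent argument), so I would drop that aside.
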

\begin{proof}
	The $i=1$ part of Lemma~\ref{devissage} implies $0 \rightarrow W \rightarrow Z \rightarrow V \rightarrow 0$ is split in $\operatorname{Rep}(G_K)$, say by a $G_K$-equivariant map $s'\colon V \rightarrow Z$. Then $s' - s \in H^0(G_{K_\infty},\operatorname{Hom}(V,W))$ and so by the $i=0$ part of Lemma~\ref{devissage} implies $s'-s \in H^0(G_K,\operatorname{Hom}(V,W))$. Thus $s$ is $G_K$-equivariant.
\end{proof}
\subsection{Proving the theorem} 
\begin{lemma}\label{inductivestep}
	Let $V,W$ be cyclotomic-free $G_K$-representations, with $V$ irreducible. Then any $G_{K_\infty}$-equivariant map $f \colon V \rightarrow W$ is $G_K$-equivariant.
\end{lemma}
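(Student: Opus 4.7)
My plan is to induct on the length of $W$. The base case, when $W$ is irreducible, is immediate from Lemma~\ref{equiv}, which in particular gives $\operatorname{Hom}_{G_K}(V,W) = \operatorname{Hom}_{G_{K_\infty}}(V,W)$ for $V, W$ both irreducible.

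For the inductive step, I would fix a composition series $W = W_0 \supset W_1 \supset \cdots \supset W_n = 0$ realising the cyclotomic-free condition on $W$, and write $W' := W/W_1$ for the associated irreducible quotient. The chosen series has the defining property that $W' \otimes \overline{\mathbb{F}}_p(1)$ is not a Jordan--Holder factor of $W_1$, and the truncation $W_1 \supset W_2 \supset \cdots$ exhibits $W_1$ as cyclotomic-free of strictly smaller length, so the inductive hypothesis applies to $W_1$.

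The first step is to compose $f$ with the projection $W \to W'$ to obtain a $G_{K_\infty}$-equivariant map $\bar{f} \colon V \to W'$ between irreducibles, which is automatically $G_K$-equivariant by Lemma~\ref{equiv}. I then split into two cases. If $\bar{f} = 0$, then $f$ factors through $W_1$ and the inductive hypothesis finishes the argument. If $\bar{f} \neq 0$, then $\bar{f}$ is an isomorphism $V \cong W'$, and in particular $V \otimes \overline{\mathbb{F}}_p(1)$ is not a Jordan--Holder factor of $W_1$. I would then pull back the extension $0 \to W_1 \to W \to W' \to 0$ along $\bar{f}$ to obtain a short exact sequence $0 \to W_1 \to Z \to V \to 0$ in $\operatorname{Rep}(G_K)$, and observe that $f$ induces a $G_{K_\infty}$-equivariant section $s \colon V \to Z$, $v \mapsto (v, f(v))$, of this pulled-back sequence. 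Corollary~\ref{consequences} applied to $(V, W_1)$ then promotes $s$ to a $G_K$-equivariant splitting, and composing with the tautological map $Z \to W$ recovers $f$ as a $G_K$-equivariant map.

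The real content of the argument sits in the case $\bar{f} \neq 0$, where the crucial observation is that cyclotomic-freeness of $W$ with respect to the chosen composition series supplies precisely the non-occurrence hypothesis required by Corollary~\ref{consequences}. Beyond arranging this dichotomy I do not foresee any real obstacle, since the serious Galois-cohomology work has already been done in Lemma~\ref{devissage}.
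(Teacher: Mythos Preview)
Your proof is correct and follows essentially the same approach as the paper. The paper avoids the explicit induction by directly choosing the largest $j$ with $f(V) \subset W_j$ (so your Case~1 is absorbed into this choice), and then, since $g\colon V \to W_j/W_{j+1}$ is an isomorphism, it simply identifies $V$ with $W_j/W_{j+1}$ and views $f \circ g^{-1}$ as a $G_{K_\infty}$-splitting of $0 \to W_{j+1} \to W_j \to W_j/W_{j+1} \to 0$, rather than pulling back; but your pullback $Z$ is canonically isomorphic to $W_j$ via the second projection once $\bar f$ is invertible, so this is only a cosmetic difference.
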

\begin{proof}
	Let $(W_j)_j$ be a composition series as in Definition~\ref{cycfreecondition}. There is a largest $j$ such that $f$ factors through $W_j \hookrightarrow W$; so long as $f \neq 0$ (in which case the lemma is trivial) the composite $g\colon V \xrightarrow{f} W_j \rightarrow W_j/W_{j+1}$ is then non-zero. Lemma~\ref{equiv} implies $g$ is a $G_K$-equivariant isomorphism. Now $f \circ g^{-1}$ is a $G_{K_\infty}$-splitting of $0 \rightarrow W_{j+1} \rightarrow W_j \rightarrow W_j/W_{j+1} \rightarrow 0$ and so $f \circ g^{-1}$ is $G_K$-equivariant by Corollary~\ref{consequences}. Thus $f$ is $G_K$-equivariant.
\end{proof}
\begin{proof}[Proof of Theorem~\ref{ff}]
	Induct on the length of $V$. When $V$ is irreducible the theorem is given by Lemma~\ref{equiv}. If $V$ is not irreducible choose a composition series $(V_i)_i$ as in Definition~\ref{cycfreecondition}. As $V_1$ is cyclotomic-free our inductive hypothesis implies $f\colon V_1 \rightarrow W$ is $G_K$-equivariant. In particular $\operatorname{ker}(f|_{V_1}) \subset V$ is $G_K$-stable. If $\operatorname{ker}(f|_{V_1}) \neq 0$ then, as $f$ factors through $V \rightarrow V/\operatorname{ker}(f|_{V_1})$ and $V/\operatorname{ker}(f|_{V_1})$ is cyclotomic-free by Lemma~\ref{quotient}, the result follows from our inductive hypothesis. Thus we can assume $f|_{V_1}$ is injective. Set $W_1 = f(V_1)$; again since $f|_{V_1}$ is $G_K$-equivariant $W_1 \subset W$ is $G_K$-stable. 
	
	Now consider the commutative diagram
	$$
	\begin{tikzcd}[column sep =tiny, row sep=small]
	0 \arrow{r}  & V_{1} \arrow{r} \arrow{d} & V \arrow{r} \arrow{d}{f} & V/V_{1} \arrow{d} \arrow{r} & 0 \\
	0 \arrow{r} & W_{1} \arrow{r}  & W \arrow{r}  & W/W_{1} \arrow{r}  & 0
	\end{tikzcd}
	$$
	with the rows exact in $\operatorname{Rep}(G_K)$. As $W/W_1$ is cyclotomic-free by Lemma~\ref{quotient}, $V/V_1 \rightarrow W/W_1$ is $G_K$-equivariant. Thus $f(\sigma(v)) - \sigma(f(v)) \in W_1$ for all $v \in V$ and $\sigma \in G_K$, and so $f(V) \subset W$ is $G_K$-stable. As $f(V)$ is cyclotomic-free we may assume $f$ is surjective. This implies $V/V_1 \rightarrow W/W_1$ is surjective, and so is a $G_K$-isomorphism since $V/V_1$ is irreducible.
	
	If we identify $V/V_{n-1} = W/W_{n-1}$ and $V_{n-1} = W_{n-1}$ via the outer arrows of the above diagram, the map $f$ shows that the two horizontal rows in the diagram define the same class in $\operatorname{Ext}^1_{\operatorname{Rep}(G_{K_\infty})}(V/V_{n-1},V_{n-1}) = H^1(G_{K_\infty},\operatorname{Hom}(V/V_{n-1},V_{n-1}))$. Lemma~\ref{devissage} therefore implies these two rows define the same class in $\operatorname{Ext}^1_{\operatorname{Rep}(G_K)}(V/V_{n-1},V_{n-1})$ and so there exists a $G_K$-equivariant $h\colon V \rightarrow W$ fitting into the diagram as $f$ does. Thus $h -f$ describes a $G_{K_\infty}$-equivariant map $V/V_1 \rightarrow W_1$. By induction $h-f$ is $G_K$-equivariant and so $f$ is $G_K$-equivariant also.
\end{proof}

\subsection{Stable lattices} To conclude this section fix a finite extension $E/\mathbb{Q}_p$ with ring of integers $\mathcal{O}$ and residue field $\mathbb{F}$. By using the above we shall show that certain $G_{K_\infty}$-stable lattices inside $E$-representations of $G_K$ are $G_K$-stable.

If $G$ is a topological group acting linearly and continuously on a topological abelian group $M$ then let $Z^1(G,M)$ denote the group of continuous $1$-cocycles $G \rightarrow M$, and let $B^1(G,M) \subset Z^1(G,M)$ denote the group of $1$-coboundaries. 
\begin{lemma}\label{coboundary}
Let $V,W$ be continuous representations of $G_K$ on finite free $\mathcal{O}$-modules. Assume $\overline{W} = W \otimes_{\mathcal{O}} \mathbb{F}$ is cyclotomic-free, that $\overline{V} = V \otimes_{\mathcal{O}} \mathbb{F}$ is irreducible, and that $\overline{V} \otimes_{\mathbb{F}} \mathbb{F}(1)$ is not a Jordan--Holder factor of $\overline{W}$.

If $c \in Z^1(G_K,\operatorname{Hom}(V,W)[\frac{1}{p}])$ is such that $c(\sigma) \in \operatorname{Hom}(V,W)$ for all $\sigma \in G_{K_\infty}$ then $c \in Z^1(G_K,\operatorname{Hom}(V,W))$.
\end{lemma}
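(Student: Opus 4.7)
The plan is to descend on the $p$-adic denominators of $c$. Since $G_K$ is compact and $c$ is continuous, its image in $\operatorname{Hom}(V,W)[\frac{1}{p}]$ is bounded, so there is a smallest integer $N \geq 0$ with $p^N c(\sigma) \in \operatorname{Hom}(V,W)$ for every $\sigma \in G_K$. The goal is $N = 0$; I will assume $N \geq 1$ and derive a contradiction with the minimality of $N$.

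Reducing $p^N c$ modulo $p$ yields a continuous cocycle $\overline{c} \in Z^1(G_K, \operatorname{Hom}(\overline{V},\overline{W}))$. The integrality hypothesis on $c|_{G_{K_\infty}}$ together with $N \geq 1$ forces $p^N c(\sigma) \in p\operatorname{Hom}(V,W)$ for every $\sigma \in G_{K_\infty}$, so that $\overline{c}|_{G_{K_\infty}} = 0$. The conditions on $\overline{V}$ and $\overline{W}$ are exactly those of Lemma~\ref{devissage}, hence the restriction map $H^1(G_K, \operatorname{Hom}(\overline{V},\overline{W})) \to H^1(G_{K_\infty}, \operatorname{Hom}(\overline{V},\overline{W}))$ is injective. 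Since $\overline{c}$ is the zero cocycle after restriction, it must be a coboundary on $G_K$, say $\overline{c}(\sigma) = \sigma(m) - m$ for some $m \in \operatorname{Hom}(\overline{V}, \overline{W})$.

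The key observation is that $\overline{c}|_{G_{K_\infty}} = 0$ reads precisely as $\sigma(m) = m$ for all $\sigma \in G_{K_\infty}$, i.e. $m$ is $G_{K_\infty}$-equivariant. Theorem~\ref{ff} then upgrades this to $G_K$-equivariance of $m$ (noting that $\overline{V}$, being irreducible, is automatically cyclotomic-free, and using the given cyclotomic-freeness of $\overline{W}$). Consequently $\overline{c} = 0$, which means $p^N c(\sigma) \in p \operatorname{Hom}(V,W)$ for every $\sigma \in G_K$, so $p^{N-1} c$ is already integral on all of $G_K$, contradicting the minimality of $N$. The argument is essentially bookkeeping once the two prior results are in hand; there is no substantive obstacle beyond combining the injectivity of restriction on $H^1$ from Lemma~\ref{devissage} with the full faithfulness of restriction on cyclotomic-free representations from Theorem~\ref{ff}. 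If a scalar extension issue arises because Theorem~\ref{ff} was stated over $\overline{\mathbb{F}}_p$ rather than $\mathbb{F}$, one simply base-changes $\overline{V}$, $\overline{W}$ and $m$ along $\mathbb{F} \hookrightarrow \overline{\mathbb{F}}_p$, where cyclotomic-freeness is preserved, and then descends back.
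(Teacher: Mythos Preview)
Your proof is correct and follows essentially the same descent-on-valuation argument as the paper. Two minor remarks: first, since $\mathcal{O}$ may be ramified over $\mathbb{Z}_p$, you should work with a uniformizer $\varpi$ of $\mathcal{O}$ rather than $p$ (so that $\operatorname{Hom}(V,W)/\varpi = \operatorname{Hom}(\overline{V},\overline{W})$); this is a cosmetic change. Second, your boundedness step via compactness of $G_K$ is a pleasant simplification over the paper, which instead invokes Tate's identification $H^1(G_K,H)[\tfrac{1}{p}] \cong H^1(G_K,H[\tfrac{1}{p}])$ to produce an integral multiple of $c$. For the step where you upgrade $G_{K_\infty}$-equivariance of $m$ to $G_K$-equivariance, the paper appeals directly to the $i=0$ case of Lemma~\ref{devissage} rather than Theorem~\ref{ff}; these amount to the same thing here, and using Lemma~\ref{devissage} avoids the scalar-extension digression you mention at the end.
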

\begin{proof}
Put $H = \operatorname{Hom}(V,W)$ (the representation of $\mathcal{O}$-linear homomorphisms) and $\overline{H} = \operatorname{Hom}(\overline{V},\overline{W}) = H \otimes_{\mathcal{O}} \mathbb{F}$. Lemma~\ref{devissage} implies $H^0(G_K,\overline{H}) = H^0(G_{K_\infty},\overline{H})$ and $H^1(G_K,\overline{H}) \rightarrow H^1(G_{K_\infty},\overline{H})$ is injective.

Let $\mathfrak{m}$ denote the maximal ideal of $\mathcal{O}$. We claim there exists $\mu \in \mathfrak{m}$ such that $\mu c \in Z^1(G_K,H)$. Recall from \cite[Proposition 2.3]{Tate76} that the map $H^1(G_K,H) \rightarrow H^1(G_K,H[\frac{1}{p}])$, coming from $H \hookrightarrow H[\frac{1}{p}]$, induces an isomorphism $H^1(G_K,H)[\frac{1}{p}] \cong H^1(G_K,H[\frac{1}{p}])$. Thus there exists $\mu' \in \mathfrak{m}$ such that the class of $\mu'c$ in $H^1(G_K,H[\frac{1}{p}])$ is represented by $\widetilde{c} \in Z^1(G_K,H)$. Thus $\mu' c - \widetilde{c} \in B^1(G_K,H[\frac{1}{p}])$. Clearly our claim holds for cocycles in $B^1(G_K,H[\frac{1}{p}])$, so the claim holds in general. 

Choose $\mu \in \mathfrak{m}$ so that $\mu c \in Z^1(G_K,H)$ and so that $v_p(\mu)$ is minimal amongst all such $\mu$. If $v_p(\mu) = 0$ there is nothing to prove so assume $v_p(\mu) >0$. Since $c(\sigma) \in H$ for $\sigma \in G_{K_\infty}$ the image $\overline{\mu c}$ of $\mu c$ in $Z^1(G_K,\overline{H})$ must vanish on $G_{K_\infty}$. Injectivity of $H^1(G_K,\overline{H}) \rightarrow H^1(G_{K_\infty},\overline{H})$ therefore implies $\overline{\mu c} \in B^1(G_K,\overline{H})$, so $\overline{\mu c} = (\sigma -1)\overline{h}$ for some $\overline{h} \in \overline{H}$. As $\overline{\mu c}|_{G_{K_\infty}} =0$, $\overline{h} \in H^0(G_{K_\infty},\overline{H})$ and so $\overline{h} \in H^0(G_K,\overline{H})$. Thus $\overline{\mu c} =0$ on $G_K$ which contradicts the minimality of $v_p(\mu)$.
\end{proof}

\begin{corollary}\label{stable}
	Let $V$ and $W$ be as in Lemma~\ref{coboundary}, and
	\begin{equation}\label{seq}
	0 \rightarrow W \rightarrow Z \rightarrow V \rightarrow 0
	\end{equation}
	be an exact sequence of $G_{K_\infty}$-representations. If the $G_{K_\infty}$-action on $Z[\frac{1}{p}]$ extends to a continuous $G_K$-action so that \eqref{seq} becomes $G_{K}$-equivariant after inverting $p$, then $Z$ is $G_K$-stable inside $Z[\frac{1}{p}]$.
\end{corollary}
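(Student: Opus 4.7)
The plan is to reduce the claim to an application of Lemma~\ref{coboundary}. Since $V$ is a finite free $\mathcal{O}$-module, the exact sequence \eqref{seq} admits an $\mathcal{O}$-linear splitting $s\colon V \rightarrow Z$. Identifying $Z = W \oplus s(V)$ as $\mathcal{O}$-modules, and hence $Z[\tfrac{1}{p}] = W[\tfrac{1}{p}] \oplus s(V)[\tfrac{1}{p}]$ as $E$-vector spaces, the $G_K$-action on $Z[\tfrac{1}{p}]$ is encoded, relative to this splitting, by the $1$-cocycle
$$
c\colon G_K \longrightarrow \operatorname{Hom}(V,W)[\tfrac{1}{p}], \qquad c(\sigma)(v) = \sigma(s(v)) - s(\sigma(v)).
$$
The fact that this is a cocycle valued in $\operatorname{Hom}(V,W)[\tfrac{1}{p}]$ is automatic from the hypothesis that \eqref{seq} is $G_K$-equivariant after inverting $p$.

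Next I would check the integrality hypothesis of Lemma~\ref{coboundary} on $G_{K_\infty}$. For $\sigma \in G_{K_\infty}$ the action preserves the integral lattice $Z$, so both $\sigma(s(v)) \in Z$ and $s(\sigma(v)) \in Z$; their difference lies in $\ker(Z \rightarrow V) = W$. Therefore $c(\sigma) \in \operatorname{Hom}(V,W)$ for every $\sigma \in G_{K_\infty}$.

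The hypotheses on $V,W$ are exactly those of Lemma~\ref{coboundary}, so that lemma applies and yields $c \in Z^1(G_K, \operatorname{Hom}(V,W))$; that is, $c(\sigma) \in \operatorname{Hom}(V,W)$ for all $\sigma \in G_K$. Unwinding the definition of $c$, for any $\sigma \in G_K$ and $v \in V$ we have $\sigma(s(v)) = s(\sigma(v)) + c(\sigma)(v) \in Z$, and since $Z$ is also stable under $W$ (trivially) and generated over $\mathcal{O}$ by $W$ and $s(V)$, we conclude that $Z \subset Z[\tfrac{1}{p}]$ is $G_K$-stable.

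The only subtle step is the appeal to Lemma~\ref{coboundary}: the choice of splitting $s$ is irrelevant because changing $s$ alters $c$ by an element of $B^1(G_K, \operatorname{Hom}(V,W)[\tfrac{1}{p}])$, and the lemma is stated at the level of cocycles so no cohomological cleanup is needed. The main work is really the preceding lemma; once that is in hand, this corollary is a formal cocycle translation of the stability question.
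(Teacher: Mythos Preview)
Your proof is correct and follows exactly the same approach as the paper: both reduce the stability question to the cocycle condition via a splitting of \eqref{seq} and then invoke Lemma~\ref{coboundary}. The paper compresses the first two paragraphs of your argument into the phrase ``in the usual way the extension defines a continuous $1$-cocycle,'' but the content is identical.
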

\begin{proof}
	In the usual way the extension \eqref{seq} defines a continuous $1$-cocycle $c: G_{K_\infty} \rightarrow \operatorname{Hom}(V,W)$ in $Z^1(G_{K_\infty},\operatorname{Hom}(V,W))$. The extension of \eqref{seq} to $G_K$ after inverting $p$ produces an extension of $c$ to an element of $Z^1(G_K,\operatorname{Hom}(V,W)[\frac{1}{p}])$. To show $Z$ is $G_K$-stable it suffices to show this extension of $c$ lies in $Z^1(G_K,\operatorname{Hom}(V,W))$, and this follows from Lemma~\ref{coboundary}.
\end{proof}

\section{Breuil--Kisin modules}\label{section3}
In this section we recall the theory of Breuil--Kisin modules and their relationship to crystalline Galois representations.

Throughout let $k$ denote a finite field of characteristic $p$. Write $K_0 = W(k)[\frac{1}{p}]$ and let $K$ be a totally ramified extension of $K_0$ of degree $e$. Let $C$ denote the completion of a fixed algebraic closure of $K$, with ring of integers $\mathcal{O}_C$.
\subsection{Breuil--Kisin modules}
	Let $\mathfrak{S} = W(k)[[u]]$. We equip this ring with the $\mathbb{Z}_p$-linear endomorphism $\varphi$ which acts on $W(k)$ by the Witt vector Frobenius and which sends $u \mapsto u^p$. Fix a uniformiser $\pi \in K$ and let $E(u) \in \mathfrak{S}$ denote the minimal polynomial of $\pi$ over $K_0$.
	\begin{definition}
		A Breuil--Kisin module $M$ is a finitely generated $\mathfrak{S}$-module equipped with an isomorphism
		$$
		\varphi_M: M \otimes_{\varphi,\mathfrak{S}} \mathfrak{S}[\tfrac{1}{E}] \cong M[\tfrac{1}{E}]
		$$
		When there is no risk of confusion we write $\varphi$ in place of $\varphi_M$. We can identify $\varphi_M$ with the semilinear map $M \rightarrow M[\frac{1}{E}]$ given by $m \mapsto \varphi_M(m \otimes 1)$. Denote the category of Breuil--Kisin modules by $\operatorname{Mod}^{\operatorname{BK}}_K$.
	\end{definition}
	
	We now recall the connection between Breuil--Kisin modules and crystalline representations. As in the previous section choose a compatible system $\pi^{1/p^n}$ of $p^n$-th roots of $\pi$ in $C$, and let $K_\infty = K(\pi^{1/p^\infty})$. Let $A_{\operatorname{inf}} =W(\mathcal{O}_{C^\flat})$ where $\mathcal{O}_{C^\flat} = \varprojlim \mathcal{O}_C/p$ with transition maps given by $x \mapsto x^p$. Our choice of $\pi^{1/p^n}$ defines an element $\pi^\flat \in \mathcal{O}_{C^\flat}$ and we embed $\mathfrak{S} \rightarrow A_{\operatorname{inf}}$ via $\sum a_i u^i \mapsto \sum a_i [\pi^\flat]^i$. This inclusion is compatible with $\varphi$ on $\mathfrak{S}$ and the Witt vector Frobenius on $A_{\operatorname{inf}}$. Note that the $G_K$-action on $\mathcal{O}_C/p$ induces a $G_K$-action on $A_{\operatorname{inf}}$, and via this action the image of $\mathfrak{S} \rightarrow A_{\operatorname{inf}}$ is $G_{K_\infty}$-stable

	\begin{lemma}\label{BKF}
		There is an exact functor $M \mapsto T(M) =  (M \otimes_{\mathfrak{S}} W(C^\flat))^{\varphi =1}$ from $\operatorname{Mod}^{\operatorname{BK}}_K$ to the category of finitely generated $\mathbb{Z}_p$-modules equipped with a continuous $\mathbb{Z}_p$-linear action of $G_{K_\infty}$. Further, there are $\varphi,G_{K_\infty}$-equivariant identifications
		$$
		M \otimes_{\mathfrak{S}} W(C^\flat) \cong T(M) \otimes_{\mathbb{Z}_p} W(C^\flat)
		$$
		which are functorial in $M$.
	\end{lemma}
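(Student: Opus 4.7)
The plan is to reduce the assertion to Fontaine's classical equivalence between \'etale $\varphi$-modules and $\mathbb{Z}_p$-representations of $G_{K_\infty}$. First I would introduce the ring $\mathcal{O}_{\mathcal{E}}$, defined as the $p$-adic completion of $\mathfrak{S}[\frac{1}{u}]$. Since $E(u) \equiv u^e \pmod{p}$, the element $E(u)$ is a unit in $\mathfrak{S}[\frac{1}{u}]/p$ and hence, by $p$-adic Nakayama, a unit in $\mathcal{O}_{\mathcal{E}}$. Consequently $M_{\mathcal{O}_{\mathcal{E}}} := M \otimes_{\mathfrak{S}} \mathcal{O}_{\mathcal{E}}$ is an \'etale $\varphi$-module in the sense of Fontaine: a finitely generated $\mathcal{O}_{\mathcal{E}}$-module equipped with a $\varphi$-semilinear isomorphism $\varphi^{\ast} M_{\mathcal{O}_{\mathcal{E}}} \xrightarrow{\sim} M_{\mathcal{O}_{\mathcal{E}}}$.

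Next I would extend the embedding $\mathfrak{S} \hookrightarrow A_{\operatorname{inf}}$ to $\mathcal{O}_{\mathcal{E}} \hookrightarrow W(C^\flat)$: the Teichm\"uller lift $[\pi^\flat]$ is a unit in $W(C^\flat)$, since $C^\flat$ is a field of characteristic $p$ and $\pi^\flat \neq 0$, and the target is $p$-adically complete. Fontaine's theory of the field of norms then provides a further $\varphi, G_{K_\infty}$-stable extension $\mathcal{O}_{\mathcal{E}} \hookrightarrow \widehat{\mathcal{O}_{\mathcal{E}^{\operatorname{ur}}}} \hookrightarrow W(C^\flat)$, where $\widehat{\mathcal{O}_{\mathcal{E}^{\operatorname{ur}}}}$ is the $p$-adic completion of a strict Henselisation of $\mathcal{O}_{\mathcal{E}}$, and on which $G_{K_\infty}$ acts as the absolute Galois group of the fraction field of $\mathcal{O}_{\mathcal{E}}$. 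Fontaine's theorem then supplies the exact equivalence $N \mapsto (N \otimes_{\mathcal{O}_{\mathcal{E}}} \widehat{\mathcal{O}_{\mathcal{E}^{\operatorname{ur}}}})^{\varphi=1}$ between \'etale $\varphi$-modules and finitely generated continuous $\mathbb{Z}_p[G_{K_\infty}]$-modules, together with a natural trivialisation $N \otimes_{\mathcal{O}_{\mathcal{E}}} \widehat{\mathcal{O}_{\mathcal{E}^{\operatorname{ur}}}} \cong T(N) \otimes_{\mathbb{Z}_p} \widehat{\mathcal{O}_{\mathcal{E}^{\operatorname{ur}}}}$.

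Applying this to $N = M_{\mathcal{O}_{\mathcal{E}}}$ defines $T(M)$ and, after base-change along $\widehat{\mathcal{O}_{\mathcal{E}^{\operatorname{ur}}}} \hookrightarrow W(C^\flat)$, yields the required $\varphi, G_{K_\infty}$-equivariant isomorphism $M \otimes_{\mathfrak{S}} W(C^\flat) \cong T(M) \otimes_{\mathbb{Z}_p} W(C^\flat)$. Taking $\varphi$-invariants and using the well-known identity $W(C^\flat)^{\varphi=1} = \mathbb{Z}_p$ identifies $T(M)$ with the formula $(M \otimes_{\mathfrak{S}} W(C^\flat))^{\varphi=1}$ appearing in the statement. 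Functoriality is automatic from the construction, and exactness is inherited from exactness of Fontaine's functor together with flatness of $W(C^\flat)$ over the discrete valuation ring $\widehat{\mathcal{O}_{\mathcal{E}^{\operatorname{ur}}}}$ (the target being $p$-torsion free). The chief obstacle, if one wanted a self-contained argument rather than an appeal, is the identification of $\widehat{\mathcal{O}_{\mathcal{E}^{\operatorname{ur}}}}$ inside $W(C^\flat)$ with its predicted $\varphi$- and $G_{K_\infty}$-structure, i.e.\ the input from the theory of the field of norms; everything else is essentially bookkeeping once that is in hand.
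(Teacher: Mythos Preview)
Your proposal is correct and is essentially the argument the paper is invoking: the paper's proof consists solely of the citation ``This is proven in \cite{Fon00}'' together with a pointer to \cite{B18} for details, and what you have written is precisely an outline of the standard route through Fontaine's equivalence between \'etale $\varphi$-modules over $\mathcal{O}_{\mathcal{E}}$ and $G_{K_\infty}$-representations, followed by base change to $W(C^\flat)$. There is no methodological difference to discuss.
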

	\begin{proof}
		This is proven in \cite{Fon00}. We refer to \cite[Proposition 4.1.5]{B18} and \cite[Construction 4.2.3]{B18} for more details.
	\end{proof}
	
	In the following theorem a crystalline $\mathbb{Z}_p$-lattice is a $G_K$-stable $\mathbb{Z}_p$-lattice inside a crystalline (in the sense of \cite{Fon94b}) $\mathbb{Q}_p$-representation of $G_K$. 

\begin{theorem}[Kisin]\label{kisin}
	There is a fully faithful functor $T \mapsto M(T)$ from the category of crystalline $\mathbb{Z}_p$-lattices into the category of Breuil--Kisin modules finite free over $\mathfrak{S}$. The module $M(T)$ is characterised up to isomorphism by $T(M(T)) \cong T$ as $G_{K_\infty}$-representations. 
\end{theorem}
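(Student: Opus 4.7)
The plan is to reproduce Kisin's original construction from \cite{Kis06}. Given a crystalline $\mathbb{Z}_p$-lattice $T \subset V$, I would begin by forming Fontaine's weakly admissible filtered $\varphi$-module $D = D_{\operatorname{cris}}(V) = (V \otimes_{\mathbb{Q}_p} B_{\operatorname{cris}})^{G_K}$, a finite-dimensional $K_0$-vector space equipped with a $\varphi$-semilinear automorphism and with a decreasing filtration on $D_K := D \otimes_{K_0} K$. The goal is to descend this data to a finite free $\mathfrak{S}$-module with $\varphi$-module structure in the sense of the Breuil--Kisin definition.

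The bridge between $D$ and $\mathfrak{S}$-modules is provided by Kisin's auxiliary ring $\mathcal{O}$ of bounded rigid-analytic functions on the open unit disc over $K_0$, into which $\mathfrak{S}[\tfrac{1}{p}]$ sits densely and which inherits a Frobenius extending that of $\mathfrak{S}$. Using the filtration on $D_K$ together with the fact that $E(u)$ is the minimal polynomial of $\pi$, one can build a $\varphi$-module $\mathcal{M}(D)$ over $\mathcal{O}$ of rank $n = \dim_{K_0} D$ equipped with an isomorphism $\varphi^* \mathcal{M}(D)[\tfrac{1}{E}] \cong \mathcal{M}(D)[\tfrac{1}{E}]$. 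That this is possible is the essential input of weak admissibility, by way of a Newton--Hodge slope comparison.

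The next step is to show that $\mathcal{M}(D)$ contains a unique finite free $\mathfrak{S}$-lattice $M \subset \mathcal{M}(D)$ stable under $\varphi$ (in the sense that $\varphi^* M[\tfrac{1}{E}] \cong M[\tfrac{1}{E}]$), and to declare $M(T) := M$. Existence is witnessed by the $\mathbb{Z}_p$-lattice $T$ via the $\varphi,G_{K_\infty}$-equivariant comparison of Lemma~\ref{BKF}; uniqueness follows from the rigidity of $\varphi$-modules over $\mathfrak{S}$, since any two such lattices sitting inside the same $\mathcal{O}$-module admit mutual $\varphi$-equivariant inclusions that must be isomorphisms by a slope argument. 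The verification $T(M(T)) \cong T$ then comes from tracing through the construction: by the way $M(T)$ sits inside $D \otimes_{K_0} \mathcal{O}$, one obtains a $\varphi,G_{K_\infty}$-equivariant identification $M(T) \otimes_{\mathfrak{S}} W(C^\flat) \cong T \otimes_{\mathbb{Z}_p} W(C^\flat)$, and taking $\varphi$-invariants recovers $T$ with its $G_{K_\infty}$-action.

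Full faithfulness is then formal: any $\mathbb{Z}_p[G_K]$-morphism $T \rightarrow T'$ induces a morphism of filtered $\varphi$-modules $D \rightarrow D'$, hence of the associated $\mathcal{O}$-modules, and by uniqueness of the $\mathfrak{S}$-lattice this restricts to a morphism $M(T) \rightarrow M(T')$; conversely, the functor $M \mapsto T(M)$ of Lemma~\ref{BKF} is faithful and the identification $T(M(T)) \cong T$ recovers morphisms between lattices from morphisms of Breuil--Kisin modules. The chief obstacle is the construction of $\mathcal{M}(D)$ and the existence/uniqueness of its $\mathfrak{S}$-lattice: this is the technical core of \cite{Kis06}, relying on a delicate analysis of $\varphi$-modules over $\mathcal{O}$ and their specialisations at the Frobenius iterates of $u - \pi$.
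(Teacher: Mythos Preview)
The paper does not actually prove this theorem: its ``proof'' consists of two citations, to Kisin's original paper and to the formulation in \cite[Theorem~4.4]{BMS}. Your proposal is therefore not a reproduction of the paper's argument but rather an attempted sketch of the content behind those citations. As such it is reasonable in outline, but there is a genuine error in your understanding of the uniqueness step.

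You write that $\mathcal{M}(D)$ contains a \emph{unique} finite free $\mathfrak{S}$-lattice stable under $\varphi$, and that any two such lattices are mutually isomorphic by a slope argument. This is false. The module $\mathcal{M}(D)$ depends only on $V = T[\tfrac{1}{p}]$, and it contains many non-isomorphic $\varphi$-stable $\mathfrak{S}$-lattices, one for each $G_{K_\infty}$-stable $\mathbb{Z}_p$-lattice in $V$. Indeed, if your uniqueness claim held then $M(T)$ would be independent of the choice of $T$ inside $V$ up to isomorphism, contradicting full faithfulness (since non-isomorphic lattices $T, T'$ in the same $V$ certainly exist). The actual mechanism is a \emph{bijection}: Kisin shows that $\varphi$-stable $\mathfrak{S}$-lattices in $\mathcal{M}(D)$ correspond bijectively to $G_{K_\infty}$-stable $\mathbb{Z}_p$-lattices in $V$, and $M(T)$ is defined to be the lattice corresponding to $T$ under this bijection. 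The characterising property $T(M(T)) \cong T$ is precisely the statement of this correspondence.

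Two smaller points: the ring you call $\mathcal{O}$ clashes with the paper's coefficient ring and is in any case not the ring of \emph{bounded} functions on the open disc (that would be $\mathfrak{S}[\tfrac{1}{p}]$) but of all convergent power series; the paper later denotes it $\mathcal{O}^{\operatorname{rig}}$. Also, full faithfulness is not quite as formal as you suggest: one must know that restriction from $G_K$ to $G_{K_\infty}$ is fully faithful on crystalline lattices, which is a separate input.
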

\begin{proof}
	The functor $T \mapsto M(T)$ was first constructed by Kisin in \cite{Kis06}. The formulation we have given here is that in \cite[Theorem 4.4]{BMS}.
\end{proof}
\subsection{Coefficients} In practice it is sometimes necessary to consider crystalline representations valued in extensions of $\mathbb{Z}_p$. Kisin's construction can be suitably adapted to allow this, provided the coefficient ring is finite over $\mathbb{Z}_p$.

\begin{notation}
	Let $E/\mathbb{Q}_p$ be a finite extension with ring of integers $\mathcal{O}$ and residue field $\mathbb{F}$. Assume that $K_0 \subset E$. 
\end{notation}

By a crystalline $\mathcal{O}$-lattice we mean an $\mathcal{O}$-lattice inside a $G_K$-representation on an $E$-vector space, which is crystalline when viewed as a representation on a $\mathbb{Q}_p$-vector space. By functoriality, if $T$ is a crystalline $\mathcal{O}$-lattice then $M(T)$ is a Breuil--Kisin module with $\mathcal{O}$-action as defined below.
\begin{definition}
	A Breuil--Kisin module with $\mathcal{O}$-action is a pair $(M,\iota)$ where $M \in \operatorname{Mod}^{\operatorname{BK}}_K$ and $\iota$ is a $\mathbb{Z}_p$-algebra homomorphism $\iota \colon\mathcal{O} \rightarrow \operatorname{End}_{\operatorname{BK}}(M)$. Equivalently a Breuil--Kisin module with $\mathcal{O}$-action is a finitely generated $\mathfrak{S}_{\mathcal{O}}:=\mathfrak{S} \otimes_{\mathbb{Z}_p} \mathcal{O}$-module equipped with an isomorphism
	$$
	M \otimes_{\varphi \otimes 1, \mathfrak{S}_{\mathcal{O}}} \mathfrak{S}_{\mathcal{O}}[\tfrac{1}{E}] \cong M[\tfrac{1}{E}]
	$$
	Let $\operatorname{Mod}^{\operatorname{BK}}_K(\mathcal{O})$ denote the category of Breuil--Kisin modules with $\mathcal{O}$-action.
\end{definition}

\begin{construction}\label{O-semilinear}
	Our assumption that $K_0 \subset E$ has the following consequence. Since $\mathfrak{S} \otimes_{\mathbb{Z}_p} \mathcal{O}$ is a finite $\mathfrak{S}$-module it is $u$-adically complete, and so the inclusion $\mathcal{O}[u] \rightarrow \mathfrak{S} \otimes_{\mathbb{Z}_p} \mathcal{O}$ given by $\sum a_iu^i \mapsto \sum u^i \otimes a_i$ extends to $\mathcal{O}[[u]] \rightarrow \mathfrak{S} \otimes_{\mathbb{Z}_p} \mathcal{O}$. In this way we view $\mathfrak{S} \otimes_{\mathbb{Z}_p} \mathcal{O}$ as an $\mathcal{O}[[u]]$-module. The map
	$$
	(\sum a_i u^i) \otimes b \mapsto (\sum \tau(a_i)bu^i)_\tau
	$$
	then describes an isomorphism of $\mathcal{O}[[u]]$-algebras $\mathfrak{S} \otimes_{\mathbb{Z}_p} \mathcal{O} \rightarrow \prod_{\tau} \mathcal{O}[[u]]$, the product running over $\tau \in \operatorname{Hom}_{\mathbb{F}_p}(k,\mathbb{F})$ (we abusively write $\tau$ also for its extension to an embedding $\tau\colon W(k) \rightarrow \mathcal{O}$). Let $\widetilde{e}_\tau \in \mathfrak{S} \otimes_{\mathbb{Z}_p} \mathcal{O}$ be the idempotent corresponding to $\tau$. As $\widetilde{e}_\tau$ is determined by the property $(a \otimes 1)\widetilde{e}_\tau = (1 \otimes \tau(a))\widetilde{e}_\tau$ for $a \in W(k)$, the map $\varphi \otimes 1$ sends
	$$
	\widetilde{e}_{\tau \circ \varphi} \mapsto \widetilde{e}_\tau
	$$
	If $M \in \operatorname{Mod}^{\operatorname{BK}}_K(\mathcal{O})$ we set $M_\tau = \widetilde{e}_\tau M$ which we view as an $\mathcal{O}[[u]]$-algebra. By the above $\varphi_M$ restricts to a map
	\begin{equation}\label{Olalal}
	M_{\tau \circ \varphi} \otimes_{\varphi,\mathcal{O}[[u]]} \mathcal{O}[[u]] \rightarrow M_\tau[\tfrac{1}{\tau(E)}]
	\end{equation}
	which becomes an isomorphism after inverting $\tau(E)$. Here $\varphi$ on $\mathcal{O}[[u]]$ is that induced by $\varphi \otimes 1$ on $\mathfrak{S} \otimes_{\mathbb{Z}_p} \mathcal{O}$, i.e. is given by $\sum a_i u^i \mapsto \sum a_iu^{ip}$.
\end{construction}

\begin{corollary}\label{freecoeff}
	If $M \in \operatorname{Mod}^{\operatorname{BK}}_K(\mathcal{O})$ is free as an $\mathfrak{S}$-module then $M$ is free over $\mathfrak{S} \otimes_{\mathbb{Z}_p} \mathcal{O}$.
\end{corollary}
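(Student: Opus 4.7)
The plan is to leverage the idempotent decomposition of Construction~\ref{O-semilinear}. Under $\mathfrak{S}\otimes_{\mathbb{Z}_p}\mathcal{O}\cong\prod_{\tau}\mathcal{O}[[u]]$ we obtain $M=\bigoplus_{\tau} M_\tau$, with each $M_\tau=\widetilde{e}_\tau M$ a finitely generated $\mathcal{O}[[u]]$-module. Since $\mathfrak{S}\otimes_{\mathbb{Z}_p}\mathcal{O}$ is a product of local rings, freeness of $M$ over this ring amounts to each $M_\tau$ being free over $\mathcal{O}[[u]]$ of one common rank. The strategy is to establish these two assertions separately.

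For freeness of each $M_\tau$: the ring $\mathcal{O}[[u]]$ is a regular local ring of Krull dimension two, so by Auslander--Buchsbaum a finitely generated nonzero module $N$ over it is free precisely when $\mathrm{depth}(N)=2$, and it suffices to exhibit a regular sequence of length two on $N$. I would show that $(u,\varpi)$ is such a sequence on $M_\tau$, where $\varpi$ is a uniformiser of $\mathcal{O}$. Since $M$ is free over $\mathfrak{S}=W(k)[[u]]$, the element $u$ acts as a non-zero divisor on $M$, and $M/uM\cong W(k)^n$ is $p$-torsion free. As $p$ is a unit multiple of a power of $\varpi$ in $\mathcal{O}$, it follows that $\varpi$ is a non-zero divisor on $M/uM$. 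Both properties clearly pass to the direct summand $M_\tau$, so $(u,\varpi)$ is regular on $M_\tau$, which is therefore free over $\mathcal{O}[[u]]$.

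For the equality of ranks $r_\tau:=\mathrm{rank}_{\mathcal{O}[[u]]}M_\tau$: the semilinear map \eqref{Olalal} is an isomorphism after inverting $\tau(E)$, so comparison of generic ranks over the domain $\mathcal{O}[[u]]$ yields $r_\tau=r_{\tau\circ\varphi}$. Since $\tau\mapsto\tau\circ\varphi$ is a cyclic permutation of the finite set $\operatorname{Hom}_{\mathbb{F}_p}(k,\mathbb{F})$, all $r_\tau$ coincide, and hence $M$ is free over $\mathfrak{S}\otimes_{\mathbb{Z}_p}\mathcal{O}$. (The case $M=0$ is vacuous.)

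The main technical step is the freeness of each $M_\tau$. The point is that $\mathfrak{S}$-freeness does not formally descend to $\mathcal{O}[[u]]$-freeness on a summand picked out by an idempotent, and one really needs the depth-theoretic input provided by the regular sequence $(u,\varpi)$ together with Auslander--Buchsbaum to bridge the gap.
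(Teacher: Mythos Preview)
Your proof is correct. It differs from the paper's in its mechanism for proving that each $M_\tau$ is free over $\mathcal{O}[[u]]$.

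The paper argues more elementarily: since $M$ is $\mathfrak{S}$-free, $M/uM$ is $W(k)$-free, hence $\mathcal{O}$-torsion-free, hence $\mathcal{O}$-free (as $\mathcal{O}$ is a PID). Lifting an $\mathcal{O}$-basis of $M/uM$ and using Nakayama (in the $u$-adic topology) produces a surjection $F\twoheadrightarrow M$ from a free $\mathcal{O}[[u]]$-module $F$ whose $\mathfrak{S}$-rank equals that of $M$; a surjection between free $\mathfrak{S}$-modules of the same rank is an isomorphism, so $M$ is $\mathcal{O}[[u]]$-free. Each $M_\tau$, being a direct summand of a free module over the local ring $\mathcal{O}[[u]]$, is then free. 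You instead invoke Auslander--Buchsbaum, exhibiting the regular sequence $(u,\varpi)$ on each $M_\tau$ to get depth $2$ and hence projective dimension $0$. Both arguments feed the same input (that $u$ and then $p$ act without torsion, coming from $\mathfrak{S}$-freeness) through different machines. The paper's route avoids any homological algebra beyond Nakayama and a rank count; yours is more conceptual, works directly componentwise, and would adapt immediately to other regular local coefficient rings. The final step---equating the ranks $r_\tau$ via the Frobenius isomorphism \eqref{Olalal}---is identical in both proofs.

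One small remark: your invocation of Auslander--Buchsbaum assumes $M_\tau\neq 0$; you note that $M=0$ is vacuous, but strictly speaking you should also observe that if $M\neq 0$ then every $M_\tau\neq 0$ (this follows from the rank equality you prove, or directly from the $\varphi$-structure).
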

\begin{proof}
	If $M$ is free over $\mathfrak{S}$ then the $\mathcal{O}$-module $M/uM$ is free over $W(k)$, is torsion-free, and is therefore free over $\mathcal{O}$. By Nakayama's lemma, any lift of an $\mathcal{O}$-basis of $M/uM$ generates $M$; there is therefore a surjection $F \rightarrow M$ where $F$ is $\mathcal{O}[[u]]$-free of $\mathfrak{S}$-rank equal to that of $M$. As surjective maps between free-modules of the same rank are isomorphisms, $M$ is free over $\mathcal{O}[[u]]$. By \eqref{Olalal} each $M_\tau$ has the same $\mathcal{O}[[u]]$-rank which proves $M$ is free over $\mathfrak{S} \otimes_{\mathbb{Z}_p} \mathcal{O} = \prod_{\tau} \mathcal{O}[[u]]$.
\end{proof}
\subsection{Strong divisibility} We now explain what it means for a $p$-torsion Breuil--Kisin module to be strongly divisible. After a result of Gee--Liu--Savitt (Theorem~\ref{GLS}) strong divisibility is closely related to the reduction modulo~$p$ of crystalline representations.

Note that since $E \equiv u^e$ modulo~$p$, a $p$-torsion Breuil--Kisin module is a finitely generated $k[[u]]$-module $M$ equipped with an isomorphism $M \otimes_{\varphi,k[[u]]} k((u)) \rightarrow M[\frac{1}{u}]$. 

From now on all Breuil--Kisin modules will be considered with $\mathcal{O}$-action.

\begin{definition}
Let $\operatorname{Mod}^{\operatorname{BK}}_k(\mathcal{O})$ be the full sub-category of $M \in \operatorname{Mod}^{\operatorname{BK}}_K(\mathcal{O})$ which are free modules over $k[[u]] \otimes_{\mathbb{F}_p} \mathbb{F}$.
\end{definition}

If $M \in \operatorname{Mod}^{\operatorname{BK}}_k(\mathcal{O})$ set $M^\varphi$ equal to the image of $ M \otimes_{\varphi,k[[u]]} k[[u]] \xrightarrow{\varphi} M[\frac{1}{u}]$. Equivalently $M^\varphi$ is the $k[[u]]$-sub-module of $M[\frac{1}{u}]$ generated by the $k[[u^p]]$-module $\varphi(M)$

\begin{construction}
	Equip $M^\varphi$ with the filtration $F^i M^\varphi = M^\varphi \cap u^iM$. Similarly define a filtration on $M$ by $F^iM = \lbrace m \in M \mid \varphi(m) \in u^iM \rbrace$. Note that the semi-linear map $\varphi\colon M \rightarrow M^\varphi$ is compatible with these filtrations.
	
	Set $M^\varphi_k = M^\varphi/uM^\varphi$ and $M_k = M/uM$. These are both $k \otimes_{\mathbb{F}_p} \mathbb{F}$-modules and we equip both with the quotient filtration coming from $M^\varphi$ and $M$ respectively. In other words $F^i M^\varphi_k$ equals the image of $F^iM^\varphi$ under $M^\varphi \rightarrow M^\varphi_k$, and likewise $F^i M_k$ is the image of $F^iM$ under $M \rightarrow M_k$.
\end{construction}

The filtration on $M_k$ is by $k \otimes_{\mathbb{F}_p} \mathbb{F}$-sub-modules. Thus, as in Construction~\ref{O-semilinear}, there are decompositions $M_k = \prod_{\tau \in \operatorname{Hom}_{\mathbb{F}_p}(k,\mathbb{F})} M_{k,\tau}$ of filtered modules. Likewise $M_k^\varphi = \prod_{\tau} M_{k,\tau}^\varphi$. Each of $M_{k,\tau}$ and $M_{k,\tau}^\varphi$ is a filtered $\mathbb{F}$-vector space. 
\begin{remark}
	Note that the map $\varphi \colon M \rightarrow M^\varphi$ induces a semi-linear map $M_k \rightarrow M^\varphi_k$, which is compatible with filtrations. This latter map is a bijection but not necessarily an isomorphism of filtered modules. 
\end{remark}
\begin{lemma}\label{equivcond}
	The map $M_k \rightarrow M^\varphi_k$ is a semi-linear isomorphism of filtered modules if and only if for each $\tau \in \operatorname{Hom}_{\mathbb{F}_p}(k,\mathbb{F})$ there exist an $\mathbb{F}[[u]]$-basis $(f_i)_i$ of $M_\tau$ and integers $(r_i)$ such that $(u^{r_i}f_i)$ is an $\mathbb{F}[[u^p]]$-basis of $\varphi(M)_\tau = \varphi(M_{\tau \circ \varphi})$. 
\end{lemma}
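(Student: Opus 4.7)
The plan is to reduce to $\tau$-components and then handle each direction separately, the converse being the substantive one. By Construction~\ref{O-semilinear}, the map $M_k \to M^\varphi_k$ decomposes as the product over $\tau$ of semi-linear maps $M_{k,\tau\circ\varphi} \to M^\varphi_{k,\tau}$ (since $\varphi$ sends $\widetilde{e}_{\tau\circ\varphi}$ to $\widetilde{e}_\tau$), so it suffices to work $\tau$ by $\tau$. Setting $\sigma = \tau\circ\varphi$, the semi-linear map $\varphi\colon M_\sigma \to M_\tau[\tfrac{1}{u}]$ is a bijection onto $\varphi(M_\sigma)$ (using $u$-torsion-freeness of $M_\sigma$ and that $\varphi$ becomes an isomorphism after inverting $u$), and $M^\varphi_\tau = \mathbb{F}[[u]]\cdot\varphi(M_\sigma)$.

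For the easy direction $(\Leftarrow)$, the basis $(f_i)$ of $M_\tau$ together with $(r_i)$ determines, by bijectivity of $\varphi$, a unique $\mathbb{F}[[u]]$-basis $(g_i)$ of $M_\sigma$ with $\varphi(g_i) = u^{r_i}f_i$. Using that $\varphi$ on $\mathbb{F}[[u]]$ sends $u\mapsto u^p$, a routine expansion in the bases $(g_i)$ and $(f_i)$ yields
\[
F^j M_\sigma = \bigoplus_i u^{\max(\lceil (j-r_i)/p\rceil,\,0)}\mathbb{F}[[u]]\,g_i, \qquad F^j M^\varphi_\tau = \bigoplus_i u^{\max(r_i,j)}\mathbb{F}[[u]]\,f_i.
\]
Reducing modulo $u$ gives $F^j M_{k,\sigma}=\bigoplus_{r_i\geq j}\mathbb{F}[g_i]$ and $F^j M^\varphi_{k,\tau}=\bigoplus_{r_i\geq j}\mathbb{F}[u^{r_i}f_i]$, and the induced map $[g_i]\mapsto [u^{r_i}f_i]$ is visibly a filtered isomorphism.

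For the converse $(\Rightarrow)$, I would pick a basis $(\overline{v}_i)$ of $M^\varphi_{k,\tau}$ adapted to the filtration, with $\overline{v}_i\in F^{r_i}\setminus F^{r_i+1}$ for specific $r_i$, transport via the given filtered isomorphism to an adapted basis of $M_{k,\sigma}$ with the same $r_i$, and lift to $g_i\in F^{r_i}M_\sigma$. Nakayama ensures $(g_i)$ is an $\mathbb{F}[[u]]$-basis of $M_\sigma$, and the lifts cannot lie in $F^{r_i+1}M_\sigma$ without contradicting adaptedness; hence $\varphi(g_i) = u^{r_i}f_i$ with $f_i\in M_\tau\setminus uM_\tau$, and semi-linear bijectivity of $\varphi$ shows that $(u^{r_i}f_i)$ is an $\mathbb{F}[[u^p]]$-basis of $\varphi(M_\sigma)$.

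The hard part will be verifying that the $f_i$ themselves form an $\mathbb{F}[[u]]$-basis of $M_\tau$. The multiplication map $\mathbb{F}((u))\otimes_{\mathbb{F}((u^p))}\varphi(M_\sigma)[\tfrac{1}{u}]\xrightarrow{\sim}M_\tau[\tfrac{1}{u}]$ is an isomorphism, so the $f_i$ are $\mathbb{F}((u))$-linearly independent and $N:=\bigoplus\mathbb{F}[[u]]f_i$ is a genuine direct sum inside $M_\tau$. In the principal case $\varphi(M_\sigma)\subset M_\tau$ (the only one relevant for strongly divisible modules with non-negative weights), I would reduce the equality $N = M_\tau$ to a length comparison in the chain $M^\varphi_\tau\subset N\subset M_\tau$: by construction $\operatorname{length}_{\mathbb{F}[[u]]}(N/M^\varphi_\tau) = \sum r_i$, whereas $\operatorname{length}_{\mathbb{F}[[u]]}(M_\tau/M^\varphi_\tau) = \sum r'_i$ with $(r'_i)$ the Smith elementary divisors of $M^\varphi_\tau\subset M_\tau$. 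Repeating the $(\Leftarrow)$-calculation in a Smith basis identifies these $r'_i$ with the filtration jumps of $M^\varphi_{k,\tau}$ (with multiplicity), and via the given filtered isomorphism these match those of $M_{k,\sigma}$, namely $\{r_i\}$. The two lengths thus coincide, forcing $N = M_\tau$.
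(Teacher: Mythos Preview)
Your argument is sound and the paper itself gives no proof here (it simply cites \cite[Lemma~5.3.4]{B18}), so there is nothing to compare against directly. Two small remarks.

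First, in the final paragraph you restrict to the ``principal case'' $\varphi(M_\sigma)\subset M_\tau$, i.e.\ all $r_i\geq 0$. The lemma as stated is for arbitrary $M\in\operatorname{Mod}^{\operatorname{BK}}_k(\mathcal{O})$, and in the paper it is later applied to $\operatorname{Hom}(P,M)^{\mathcal{O}}$, whose weights lie in $[-p,p]$; so negative $r_i$ genuinely occur. Your length argument extends immediately: replace $\operatorname{length}_{\mathbb{F}[[u]]}(M_\tau/M^\varphi_\tau)$ and $\operatorname{length}_{\mathbb{F}[[u]]}(N/M^\varphi_\tau)$ by the signed relative indices $[M_\tau:M^\varphi_\tau]$ and $[N:M^\varphi_\tau]$ of lattices in $M_\tau[\tfrac{1}{u}]$, which satisfy the same additivity and for which one still has $[N:M^\varphi_\tau]=\sum r_i$ and $[M_\tau:M^\varphi_\tau]=\sum r'_i$. (Alternatively, twist $\varphi_M$ by a large power of $u$ to reduce to the non-negative case.) You already have $N\subset M_\tau$ regardless of the signs of the $r_i$, since $g_i\in F^{r_i}M_\sigma$ forces $f_i\in M_\tau$, so $[M_\tau:N]=0$ gives $N=M_\tau$.

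Second, the detour through the jumps of $M_{k,\sigma}$ at the very end is unnecessary: you chose the $\overline{v}_i$ adapted to the filtration on $M^\varphi_{k,\tau}$, so the $r_i$ are already the jumps of $M^\varphi_{k,\tau}$, and the Smith computation shows the $r'_i$ are those same jumps. The equality $\{r_i\}=\{r'_i\}$ follows directly, without invoking the filtered isomorphism a second time.
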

\begin{proof}
	This is \cite[Lemma 5.3.4]{B18}.
\end{proof}
\begin{definition}
	For $M \in \operatorname{Mod}^{\operatorname{BK}}_k(\mathcal{O})$ and $\tau \in \operatorname{Hom}_{\mathbb{F}_p}(k,\mathbb{F})$ define $\operatorname{Weight}_\tau(M)$ to be the multiset of integers which contains $i$ with multiplicity 
	$$
	\operatorname{dim}_{\mathbb{F}} \operatorname{gr}^i(M^\varphi_k)
	$$
\end{definition} 
\begin{definition}[Strong divisibility]
	$M \in \operatorname{Mod}^{\operatorname{BK}}_k(\mathcal{O})$ is strongly divisible if $\operatorname{Weight}_\tau(M) \subset [0,p]$ for each $\tau$ and if $M_{k} \rightarrow M^\varphi_{k}$ is a semi-linear isomorphism of filtered modules. Let $\operatorname{Mod}^{\operatorname{SD}}_k(\mathcal{O})$ denote full subcategory of strongly divisible Breuil--Kisin modules.
\end{definition}

\begin{remark}\label{explicit}
	Recall that any matrix $X \in \operatorname{GL}_n(\mathbb{F}((u)))$ can be written uniquely as $A \operatorname{diag}(u^{r_i})B$ for some $r_i \in \mathbb{Z}$ and $A,B \in \operatorname{GL}_n(\mathbb{F}[[u]])$. If $M \in \operatorname{Mod}^{\operatorname{BK}}_k(\mathcal{O})$ and if we choose $\mathbb{F}[[u]]$-bases of $M_{\tau \circ \varphi}$ and $M_\tau$ then the with respect to these bases $\varphi: M_{\tau \circ \varphi} \rightarrow M_\tau$ may be represented by a matrix 
	$$
	A \operatorname{diag}(u^{r_i})B
	$$
	with $A,B \in \operatorname{GL}_n(\mathbb{F}[[u]])$. Then $\lbrace r_i \rbrace = \operatorname{Weight}_\tau(M)$. Moreover the conditions of Lemma~\ref{equivcond} are equivalent to asking that $B \in \operatorname{GL}_n(\mathbb{F}[[u^p]])$. In particular $M \in \operatorname{Mod}^{\operatorname{SD}}_k(\mathcal{O})$ if and only if $r_i \in [0,p]$ and $B \in \operatorname{GL}_n(\mathbb{F}[[u^p]])$.
\end{remark}

\begin{proposition}\label{SDexact}
	Let $0 \rightarrow M \rightarrow N \rightarrow P \rightarrow 0$ be an exact sequence in $\operatorname{Mod}^{\operatorname{BK}}_k(\mathcal{O})$.
	\begin{enumerate}
		\item If $N \in \operatorname{Mod}^{\operatorname{SD}}_k(\mathcal{O})$ then $M,P \in \operatorname{Mod}^{\operatorname{SD}}_k(\mathcal{O})$ and $\operatorname{Weight}_\tau(N) = \operatorname{Weight}_\tau(M) \cup \operatorname{Weight}_\tau(P)$.
		\item If $M,P \in \operatorname{Mod}^{\operatorname{SD}}_k(\mathcal{O})$ then $N \in \operatorname{Mod}^{\operatorname{SD}}_k(\mathcal{O})$ if and only if the map $N \rightarrow P$ is a strict\footnote{Recall that a map $f\colon M \rightarrow N$ of filtered modules is strict if $f(F^iM) = F^i N \cap f(M)$ for every $i \in \mathbb{Z}$.} map of filtered modules.
	\end{enumerate}
\end{proposition}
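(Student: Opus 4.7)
The plan is to work one \(\tau\)-component at a time, using the matrix characterisation from Remark~\ref{explicit}: an object \(L\) is strongly divisible precisely when, for each \(\tau\), the matrix of \(\varphi \colon L_{\tau \circ \varphi} \to L_\tau\) admits a factorisation \(A \operatorname{diag}(u^{r_i}) B\) with \(A \in \operatorname{GL}_n(\mathbb{F}[[u]])\), \(B \in \operatorname{GL}_n(\mathbb{F}[[u^p]])\) and \(r_i \in [0,p]\). Since \(P\) is free over \(k[[u]] \otimes_{\mathbb{F}_p} \mathbb{F}\), the sequence \(0 \to M_\tau \to N_\tau \to P_\tau \to 0\) splits as \(\mathbb{F}[[u]]\)-modules; I would choose an \(\mathbb{F}[[u]]\)-basis of \(N_\tau\) whose first \(a\) vectors form a basis of \(M_\tau\) and whose last \(b\) project to a basis of \(P_\tau\), so that the matrix of \(\varphi\) on \(N\) is block upper-triangular
\[
\Phi_N = \begin{pmatrix} \Phi_M & C \\ 0 & \Phi_P \end{pmatrix},
\]
with \(\Phi_M\) and \(\Phi_P\) representing \(\varphi\) on \(M\) and \(P\) respectively.

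For part (1), assume \(N\) is SD. Since the Smith exponents \(s_i\) of \(\Phi_N\) are non-negative, \(\Phi_N\) has entries in \(\mathbb{F}[[u]]\), and therefore so do \(\Phi_M\) and \(\Phi_P\); in particular \(M^\varphi \subset M\) and \(P^\varphi \subset P\). Using the saturation \(M = N \cap M[\tfrac{1}{u}]\) (which comes from freeness of \(P = N/M\)) I would derive the filtration identities \(F^i M = F^i N \cap M\) and \(F^i M^\varphi = F^i N^\varphi \cap M^\varphi\), together with the analogous statements expressing the filtrations on \(P, P^\varphi\) as quotients of those on \(N, N^\varphi\). Passing mod \(u\) then gives strict short exact sequences \(0 \to M_k \to N_k \to P_k \to 0\) and \(0 \to M^\varphi_k \to N^\varphi_k \to P^\varphi_k \to 0\) of filtered \(\mathbb{F}\)-vector spaces, and a diagram chase starting from the middle vertical filtered isomorphism \(\bar\varphi \colon N_k \to N^\varphi_k\) (given by SD of \(N\)) shows that the outer verticals \(\bar\varphi \colon M_k \to M^\varphi_k\) and \(\bar\varphi \colon P_k \to P^\varphi_k\) are also filtered isomorphisms. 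The weight bound \([0,p]\) is inherited by comparing Smith exponents of the triangular blocks of \(\Phi_N\), giving SD of both \(M\) and \(P\); additivity of weights then follows by counting dimensions of graded pieces in the strict sequence on the \(\varphi\)-side.

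For part (2) the forward implication is immediate from (1), since the strict filtered exact sequences constructed in its proof already encode the strictness of \(N \to P\). For the converse, assume \(M, P\) SD with decompositions \(\Phi_M = A_M \operatorname{diag}(u^{s^M_i}) B_M\) and \(\Phi_P = A_P \operatorname{diag}(u^{s^P_j}) B_P\) as in Remark~\ref{explicit}, and assume \(N \to P\) is strict. Conjugating by the block-diagonal matrices \(\operatorname{diag}(A_M, A_P) \in \operatorname{GL}_n(\mathbb{F}[[u]])\) and \(\operatorname{diag}(B_M, B_P) \in \operatorname{GL}_n(\mathbb{F}[[u^p]])\) brings \(\Phi_N\) to the form
\[
\begin{pmatrix} \operatorname{diag}(u^{s^M_i}) & C' \\ 0 & \operatorname{diag}(u^{s^P_j}) \end{pmatrix}
\]
with \(C' = A_M^{-1} C B_P^{-1}\) having entries in \(\mathbb{F}[[u]]\). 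The strictness hypothesis translates into valuation estimates on the entries of \(C'\) which allow the off-diagonal block to be cleared by a single left multiplication by a block upper-triangular matrix \(\bigl(\begin{smallmatrix} I & -X \\ 0 & I \end{smallmatrix}\bigr) \in \operatorname{GL}_n(\mathbb{F}[[u]])\); the resulting block-diagonal matrix manifestly has a factorisation \(A \operatorname{diag}(u^{s_i}) B\) with \(B \in \operatorname{GL}_n(\mathbb{F}[[u^p]])\), giving SD of \(N\).

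The main difficulty is the asymmetry in the Smith-form characterisation between the ring \(\mathbb{F}[[u]]\) governing \(A\) and the subring \(\mathbb{F}[[u^p]]\) governing \(B\), which reflects the \(\varphi\)-semilinearity of \(\varphi\). This asymmetry means row operations on \(\Phi\) are essentially unrestricted while column operations must lie in \(\mathbb{F}[[u^p]]\), and it is precisely why strictness is needed: column operations cannot be used to eliminate \(C'\). In part (1) the delicate step is verifying the strict exactness of the two filtered sequences via the saturation identities, while in part (2) the crux is extracting from strictness the precise valuation conditions on \(C'\) that make the row-operation argument go through.
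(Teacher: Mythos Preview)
The paper does not prove this proposition; it simply cites \cite[Proposition~5.4.7]{B18}. So there is no argument here to compare against, and your proposal must stand on its own. It has the right architecture but contains two real gaps.

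\textbf{Part (1): the quotient-side strictness is unjustified.} You correctly observe the sub-side identities $F^iM = F^iN\cap M$ and $F^iM^{\varphi}=F^iN^{\varphi}\cap M^{\varphi}$, which follow from saturation. But you then assert ``the analogous statements expressing the filtrations on $P,P^{\varphi}$ as quotients of those on $N,N^{\varphi}$''. That statement is precisely that $N\to P$ (and $N^{\varphi}\to P^{\varphi}$) is strict, i.e.\ that every $p\in F^iP$ lifts to $F^iN$. This is \emph{not} a formal consequence of saturation; it is the essential content of the proposition and requires the strong divisibility of $N$ in a substantive way. Indeed, this quotient strictness is exactly the forward implication of part~(2), so your proof of~(1) assumes what~(2) is meant to establish. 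The adapted basis $(g_k)$ of $N_\tau$ from Lemma~\ref{equivcond} need not respect the submodule $M$, so one cannot simply read off adapted bases for $M$ and $P$; some genuine argument is needed here (for instance, an inductive row/column reduction respecting the block structure, using $t_k\in[0,p]$).

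\textbf{Part (2) converse: a single left multiplication does not suffice.} After reducing to $\Phi_N=\bigl(\begin{smallmatrix}D_M & C'\\ 0 & D_P\end{smallmatrix}\bigr)$, you claim strictness gives valuation bounds on $C'$ allowing the off-diagonal to be killed by $\bigl(\begin{smallmatrix}I & -X\\ 0 & I\end{smallmatrix}\bigr)\in\operatorname{GL}_n(\mathbb{F}[[u]])$ alone. This is false. Take $D_M=(1)$, $D_P=(u^p)$, $C'=(1)$: then $N\to P$ is strict (lift $e^P$ to $\tilde e^P - e^M$), yet $\bigl(\begin{smallmatrix}1 & -X\\0 & 1\end{smallmatrix}\bigr)\bigl(\begin{smallmatrix}1 & 1\\0 & u^p\end{smallmatrix}\bigr)=\bigl(\begin{smallmatrix}1 & 1-Xu^p\\0 & u^p\end{smallmatrix}\bigr)$ is never block-diagonal for $X\in\mathbb{F}[[u]]$. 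What strictness actually gives is the existence of $Y\in M(\mathbb{F}[[u]])$ and $X'\in M(\mathbb{F}[[u]])$ with $C'=YD_P - D_M\,\varphi(X')$, whence
\[
\begin{pmatrix}D_M & C'\\ 0 & D_P\end{pmatrix}
=\begin{pmatrix}I & Y\\ 0 & I\end{pmatrix}
\begin{pmatrix}D_M & 0\\ 0 & D_P\end{pmatrix}
\begin{pmatrix}I & -\varphi(X')\\ 0 & I\end{pmatrix},
\]
with the right factor in $\operatorname{GL}_n(\mathbb{F}[[u^p]])$. So you need \emph{both} a row operation over $\mathbb{F}[[u]]$ and a column operation over $\mathbb{F}[[u^p]]$; the asymmetry you yourself flag is exactly why.
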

\begin{proof}
	This is \cite[Proposition 5.4.7]{B18}.
\end{proof}

Recall that if $V$ is a crystalline representation of $G_K$ on an $E$-vector space and $D_{\operatorname{crys}}(V) = (V \otimes_{\mathbb{Q}_p} B_{\operatorname{crys}})^{G_K}$ is the associated filtered $\varphi$-module, then $D_{\operatorname{crys}}(V)$ is a free $K_0 \otimes_{\mathbb{Q}_p} E$-module, and so $D_{\operatorname{crys}}(V)_K = \prod_\tau D_{\operatorname{crys}}(V)_{K,\tau}$ as filtered modules. Each $D_{\operatorname{crys}}(V)_{K,\tau}$ is a $K \otimes_{K_0} E$-module; the $\tau$-th Hodge--Tate weights of $V$ is the multiset $\operatorname{HT}_\tau(V)$ containing $i$ with multiplicity 
$$
\operatorname{dim}_E \operatorname{gr}^i(D_{\operatorname{crys}}(V)_{K,\tau})
$$ 
Thus $\operatorname{HT}_\tau(V)$ contains $e\operatorname{dim}_E V$ integers and our normalisations are such that the Hodge--Tate weight of the cyclotomic character is $-1$ (or rather $e$ copies of $-1$).

The following theorem relates $\operatorname{Mod}^{\operatorname{SD}}_k(\mathcal{O})$ to reductions of crystalline representations. We must assume that $K = K_0$ and that if $p =2$ then $\pi$ is chosen so that $K_\infty \cap K(\mu_{p^\infty}) = K$ (such $\pi$ exist by \cite[Lemma 2.1]{Wang17}).
\begin{theorem}[Gee--Liu--Savitt, Wang]\label{GLS}
	Assume $K$ and $\pi$ are as in the previous paragraph. Let $T$ be a crystalline $\mathcal{O}$-lattice and let $V = T \otimes_{\mathcal{O}} E$. If $\operatorname{HT}_\tau(V) \subset [0,p]$ for each $\tau$ then $\overline{M} := M(T) \otimes_{\mathcal{O}} \mathbb{F} \in \operatorname{Mod}^{\operatorname{SD}}_k(\mathcal{O})$ and $\operatorname{Weight}_\tau(\overline{M}) = \operatorname{HT}_\tau(V)$.
\end{theorem}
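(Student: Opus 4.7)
The plan is to reduce the claim to a direct matrix-level analysis of Frobenius on $M := M(T)$. Since $K = K_0$ is unramified we have $e = 1$, so $E(u) \equiv u \pmod p$ in $\mathfrak{S}$, and by Corollary~\ref{freecoeff}, $M$ is free over $\mathfrak{S}_{\mathcal{O}}$; hence $\overline{M} := M \otimes_{\mathcal{O}} \mathbb{F}$ is free over $k[[u]] \otimes_{\mathbb{F}_p} \mathbb{F}$ and is an object of $\operatorname{Mod}^{\operatorname{BK}}_k(\mathcal{O})$. The task is then to find, for each embedding $\tau$, bases of $\overline{M}_{\tau \circ \varphi}$ and $\overline{M}_\tau$ realising the explicit criterion for strong divisibility recorded in Remark~\ref{explicit}.

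The heart of the matter is the structural result of Gee--Liu--Savitt (with Wang's contribution when $p=2$) providing a normal form for $\varphi$ on $M$. Under the hypothesis $\operatorname{HT}_\tau(V) \subset [0,p]$ for all $\tau$, one can choose $\mathcal{O}[[u]]$-bases of $M_{\tau \circ \varphi}$ and $M_\tau$ so that the matrix of $\varphi$ takes the form $A_\tau \operatorname{diag}(E^{h_{i,\tau}}) B_\tau$, with $A_\tau \in \operatorname{GL}_n(\mathcal{O}[[u]])$, the multiset $\{h_{i,\tau}\}$ equal to $\operatorname{HT}_\tau(V)$, and $B_\tau$ congruent modulo $p$ to an element of $\operatorname{GL}_n(\mathbb{F}[[u^p]])$. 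Their argument passes through the Breuil module (or equivalently the filtered $\varphi$-module $D_{\operatorname{crys}}(V)$) as an intermediate object: Frobenius on this intermediate object is expressible in terms of the Hodge filtration on $D_{\operatorname{crys}}(V)_K$, and the bound $\operatorname{HT}_\tau(V) \le p$ is precisely what forces the ``$B$-part'' of the resulting matrix to land modulo $p$ in the Frobenius-image subring $\mathbb{F}[[u^p]]$.

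Granted this input the theorem follows immediately. Reducing the normal form modulo $p$ and using $E(u) \equiv u$, the matrix of $\varphi$ on $\overline{M}$ becomes $\overline{A}_\tau \operatorname{diag}(u^{h_{i,\tau}}) \overline{B}_\tau$ with $\overline{A}_\tau \in \operatorname{GL}_n(\mathbb{F}[[u]])$ and $\overline{B}_\tau \in \operatorname{GL}_n(\mathbb{F}[[u^p]])$; Remark~\ref{explicit} then simultaneously yields $\operatorname{Weight}_\tau(\overline{M}) = \operatorname{HT}_\tau(V) \subset [0,p]$ and the filtered-isomorphism criterion of Lemma~\ref{equivcond}, so $\overline{M} \in \operatorname{Mod}^{\operatorname{SD}}_k(\mathcal{O})$. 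The main obstacle is therefore precisely the Gee--Liu--Savitt normal form itself: for weights in $[0, p-1]$ it follows essentially from Fontaine--Laffaille-style arguments, but the endpoint $p$ requires their delicate analysis exploiting the built-in flexibility of the $E$-isogeny formulation of Kisin modules, and for $p = 2$ one additionally needs Wang's choice of uniformiser $\pi$ ensuring $K_\infty \cap K(\mu_{p^\infty}) = K$ so that the Breuil--Kisin formalism interacts correctly with the crystalline property.
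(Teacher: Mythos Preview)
Your proposal is correct and takes essentially the same approach as the paper: both simply invoke the structural description of $M(T)$ from \cite[Theorem 4.22]{GLS} (and \cite[Theorem 4.2]{Wang17} for $p=2$) and reduce modulo a uniformiser, with your version spelling out explicitly how the resulting matrix form matches the criterion of Remark~\ref{explicit}. The one point you gloss over that the paper flags is a normalisation issue: in \cite{GLS,Wang17} Hodge--Tate weights carry the opposite sign and Breuil--Kisin modules are attached contravariantly, so $M(T)$ is recovered as the dual of their $\mathfrak{M}$; this does not affect the argument but is worth tracking when actually citing their Theorem 4.22.
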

\begin{proof}
	When $p>2$ this follows by reducing the description of $M(T)$ given in \cite[Theorem 4.22]{GLS} modulo a uniformizer of $\mathcal{O}$. The case $p=2$ is proven in \cite[Theorem 4.2]{Wang17}.\footnote{When using results from \cite{GLS,Wang17} it is important to keep track of normalisations. In both references Hodge--Tate weights are normalised to be the opposite of ours. Also Breuil--Kisin modules are attached contravariantly to crystalline representations; from the Breuil--Kisin module $\mathfrak{M}$ associated to $T$ in \cite{GLS,Wang17} one recovers $M(T)$ as the dual of $\mathfrak{M}$ (for the dual of a Breuil--Kisin module see the construction at the start of Subsection~\ref{cohom})}
\end{proof}

\section{Strongly divisible extensions}\label{section4}
We maintain the notation from the previous subsection. Our aim here is to compute dimensions of the space of extensions of strongly divisible Breuil--Kisin modules.

Throughout we shall use the following construction. If $M,N \in \operatorname{Mod}^{\operatorname{BK}}_K(\mathcal{O})$ define a Breuil--Kisin module $\operatorname{Hom}(M,N)^{\mathcal{O}}$ with underlying module $\operatorname{Hom}_{\mathfrak{S}_{\mathcal{O}}}(M,N)$ and with Frobenius given by $f \mapsto \varphi_N \circ f \circ \varphi_M^{-1}$ (see \cite[Construction 4.2.5 and 4.3.3]{B18}).

\subsection{Cohomology and ext groups}\label{cohom}
	If $M \in \operatorname{Mod}^{\operatorname{BK}}_k(\mathcal{O})$ let $H^i(M)$ denote the cohomology of the complex
	$$
	M \xrightarrow{\varphi - 1} M[\tfrac{1}{u}]
	$$
	The $H^i(M)$ are $\mathbb{F}$-vector spaces. 

	\begin{construction}\label{H1andExt}
		If $P,M \in \operatorname{Mod}^{\operatorname{BK}}_k(\mathcal{O})$ then there is a map
		\begin{equation}\label{identO}
		H^1(\operatorname{Hom}(P,M)^{\mathcal{O}}) \rightarrow \operatorname{Ext}^1_{\mathbb{F}}(P,M)
		\end{equation}
		into the first Yoneda extension group in the exact category $\operatorname{Mod}^{\operatorname{BK}}_k(\mathcal{O})$. This map sends a class represented by $f \in \operatorname{Hom}(P,M)^{\mathcal{O}}[\frac{1}{u}]$ onto a class represented by an extension $0 \rightarrow M \rightarrow N_{f} \rightarrow P \rightarrow 0$ in $\operatorname{Mod}^{\operatorname{BK}}_k(\mathcal{O})$ where $N_{f}$ is the Breuil--Kisin module with underlying $k[[u]] \otimes_{\mathbb{F}_p} \mathbb{F}$-module $M \oplus P$ with Frobenius given by $(\varphi_M + f \circ \varphi_P, \varphi_P)$. This map is injective and functorial in $P$ and $M$, in particular it is a map of $\mathbb{F}$-vector spaces. Since every extension in $\operatorname{Mod}^{\operatorname{BK}}_k(\mathcal{O})$ of $P$ by $M$ splits as a $k[[u]] \otimes_{\mathbb{F}_p} \mathbb{F}$-module \eqref{identO} is surjective, and so an isomorphism.
\end{construction}

	If $M \in \operatorname{Mod}^{\operatorname{BK}}_k(\mathcal{O})$ let $H^i_{\operatorname{SD}}(M)$ denote the cohomology of the complex
	$$
	F^0M \xrightarrow{\varphi -1} M
	$$
	Then $H^0_{\operatorname{SD}}(M) = H^0(M)$. The inclusion $M \rightarrow M[\frac{1}{u}]$ induces a map $H^1_{\operatorname{SD}}(M) \rightarrow H^1(M)$. If $m \in M$ can be written as $\varphi(m')-m'$ with $m' \in M$ then $\varphi(m') \in M$ and so $m' \in F^0M$; therefore $H^1_{\operatorname{SD}}(M) \rightarrow H^1(M)$ is injective. Let $$
	\operatorname{Ext}^1_{\operatorname{SD}}(P,M) \subset \operatorname{Ext}^1_{\mathbb{F}}(P,M)
	$$
	denote the image of $H^1_{\operatorname{SD}}(\operatorname{Hom(P,M)^{\mathcal{O}}})$ under \eqref{identO}.

\begin{lemma}\label{SD=SD}
	Let $0 \rightarrow M \rightarrow N \rightarrow P \rightarrow 0$ be an extension in $\operatorname{Mod}^{\operatorname{BK}}_k(\mathcal{O})$ and suppose that $M,P \in \operatorname{Mod}^{\operatorname{SD}}_k(\mathcal{O})$. Then $N \in \operatorname{Mod}^{\operatorname{SD}}_k(\mathcal{O})$ if and only if the class of this extension lies in $\operatorname{Ext}^1_{\operatorname{SD}}(P,M)$.
\end{lemma}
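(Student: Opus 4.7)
The plan begins by invoking Proposition~\ref{SDexact}(2): since $M,P \in \operatorname{Mod}^{\operatorname{SD}}_k(\mathcal{O})$, we have $N \in \operatorname{Mod}^{\operatorname{SD}}_k(\mathcal{O})$ if and only if $N \to P$ is a strict map of filtered modules. Unwinding the definition, $\operatorname{Ext}^1_{\operatorname{SD}}(P,M)$ is the image of $H^1_{\operatorname{SD}}(\operatorname{Hom}(P,M)^{\mathcal{O}})$ in $H^1(\operatorname{Hom}(P,M)^{\mathcal{O}})$, and this image consists exactly of those classes admitting a cocycle representative in $\operatorname{Hom}(P,M)^{\mathcal{O}}$ (i.e.\ without inverting $u$). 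The lemma therefore reduces to the equivalent statement: strictness of $N \to P$ is equivalent to the extension class having such an integral representative.

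For the implication ``$\operatorname{Ext}^1_{\operatorname{SD}} \Rightarrow \operatorname{SD}$'', I would pick an integral representative $f \in \operatorname{Hom}(P,M)^{\mathcal{O}}$ and model the extension as $N_f = M \oplus P$ with Frobenius $(\varphi_M + f \circ \varphi_P, \varphi_P)$. The tautological section $s\colon p \mapsto (0,p)$ is then filtered: for $p \in F^i P$, write $\varphi_P(p) = u^i q$ with $q \in P$, whence $f(\varphi_P(p)) = u^i f(q) \in u^i M$ since $f(q) \in M$; combined with $\varphi_P(p) \in u^i P$, this puts $(0,p)$ into $F^i N_f$. Strictness is then immediate.

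For the converse, suppose $N \in \operatorname{Mod}^{\operatorname{SD}}_k(\mathcal{O})$, so $N \to P$ is strict. By Remark~\ref{explicit}, and after absorbing the matrices $A$ and $B$ into basis changes (possible since $B \in \operatorname{GL}_n(\mathbb{F}[[u^p]])$ by strong divisibility), for each embedding $\tau$ there are $\mathbb{F}[[u]]$-bases $(e_j)_j$ of $P_{\tau\varphi}$ and $(f_j)_j$ of $P_\tau$ with $\varphi_P(e_j) = u^{r_j} f_j$ and $r_j \in [0,p]$; in particular $e_j \in F^{r_j} P_{\tau\varphi}$. Using strictness, lift each $e_j$ to some $\tilde e_j \in F^{r_j} N_{\tau\varphi}$; setting $s(e_j) = \tilde e_j$ and extending $\mathfrak{S}_{\mathcal{O}}$-linearly produces a section $s$ which satisfies $s(F^i P) \subset F^i N$ for every $i$, using $\varphi_P(u^m e_j) = u^{mp}\varphi_P(e_j)$ and freeness of $(e_j)_j$. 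Identifying $N \cong M \oplus P$ via $s$, the Frobenius takes the form $(\varphi_M + f \circ \varphi_P, \varphi_P)$ for a cocycle $f$, and evaluating the filtered condition on $e_j$ gives $\varphi_N(s(e_j)) = (u^{r_j} f(f_j), u^{r_j} f_j) \in u^{r_j}N$, forcing $f(f_j) \in M$. Since $(f_j)_j$ is an $\mathbb{F}[[u]]$-basis of $P_\tau$, $\mathfrak{S}_{\mathcal{O}}$-linearity then forces $f(P) \subset M$, i.e.\ $f \in \operatorname{Hom}(P,M)^{\mathcal{O}}$, as required.

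The main obstacle is the converse: producing an integral cocycle. This is where strong divisibility of $P$ is essential---without the diagonalised Frobenius afforded by Lemma~\ref{equivcond} and Remark~\ref{explicit}, a filtered section would only force $f$ to be integral on the smaller submodule $P^\varphi \subsetneq P$, which would be insufficient to conclude $f \in \operatorname{Hom}(P,M)^{\mathcal{O}}$.
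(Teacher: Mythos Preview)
Your proof is correct and follows essentially the same approach as the paper: both reduce via Proposition~\ref{SDexact} to strictness, handle the forward direction by observing that an integral $f$ makes the zero section filtered, and for the converse exploit the special basis of $P$ afforded by strong divisibility (Lemma~\ref{equivcond}). The only cosmetic difference is that the paper fixes an arbitrary cocycle $f$ and then explicitly constructs a coboundary $g$ with $f+\varphi(g)-g$ integral, whereas you choose the filtered section $s$ first and read off an integral cocycle directly; these are the same manoeuvre, since changing the section by $g$ changes the cocycle by exactly $\varphi(g)-g$.
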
  
\begin{proof}
	As in Construction~\ref{H1andExt} we can assume $N=N_{f}$ for some $f \in \operatorname{Hom}(P,M)^{\mathcal{O}}[\frac{1}{u}]$. Thus as a module $N = M \bigoplus P$ and $\varphi_N = (\varphi_M + f \circ \varphi_P,\varphi_P)$. Proposition~\ref{SDexact} implies $N \in \operatorname{Mod}^{\operatorname{SD}}_k(\mathcal{O})$ if and only if $N \rightarrow P$ is strict as a map of filtered modules. We have to show that if $f \in \operatorname{Hom}(P,M)^{\mathcal{O}}$ then $N \rightarrow P$ is strict, and conversely that $N \rightarrow P$ being strict implies there exists $g \in \operatorname{Hom}(P,M)^{\mathcal{O}}$ such that $f + \varphi(g) -g \in \operatorname{Hom}(P,M)^{\mathcal{O}}$.
	
	The map $N \rightarrow P$ is strict if and only if for every $\tau \in \operatorname{Hom}_{\mathbb{F}_p}(k,\mathbb{F})$ and every $z \in F^iP_{\tau \circ \varphi}$, there exists $(m,z) \in N_{\tau \circ \varphi}$ such that 
	$$
	\varphi((m,z)) = (\varphi_M(m)+f(\varphi_P(z)),\varphi_P(z)) \in u^iN_\tau
	$$
	If $f \in \operatorname{Hom}(P,M)^{\mathcal{O}}$ then $f(\varphi_P(z)) \in u^iM_\tau$, since $\varphi_P(z) \in u^iP_\tau$, and so we can take $m = 0$. This shows $f \in \operatorname{Hom}(P,M)^{\mathcal{O}}$ implies $N \rightarrow P$ is strict.
	
	For the converse, since $P \in \operatorname{Mod}^{\operatorname{SD}}_k(\mathcal{O})$ we can find, for each $\tau \in \operatorname{Hom}_{\mathbb{F}_p}(k,\mathbb{F})$, a basis $(z_i)$ of $P_\tau$ and integers $r_i$ such that $u^{r_i}z_i$ forms a basis of $\varphi_P(P)_\tau$ (Lemma~\ref{equivcond}). If $N \rightarrow P$ is strict then, as in the previous paragraph, we may choose $m_i \in M_{\tau \circ \varphi}$ such that $\varphi_M(m_i) + f(u^{r_i}z_i) \in u^{r_i}M_\tau$. Since the $u^{r_i} z_i$ form an $\mathbb{F}[[u^p]]$-basis of $\varphi_P(P_{\tau \circ \varphi})$, the $n_i = \varphi_P^{-1}(u^{r_i}z_i)$ form an $\mathbb{F}[[u]]$-basis of $P_{\tau \circ \varphi}$. Define $g \in \operatorname{Hom}(P,M)^{\mathcal{O}}$ by asserting that on $P_{\tau \circ \varphi}$ this map sends $n_i \mapsto m_i$. Then $ f +\varphi(g)- g$ sends
	$$
	u^{r_i}z_i \mapsto f(u^{r_i}z_i) + \varphi_M \circ g \circ \varphi^{-1}_P(u^{r_i}z_i) - g(z_i) = f(u^{r_i}z_i) + \varphi_M(m_i) + g(u^{r_i}z_i) \in u^{r_i}M_\tau
	$$
	Thus $f + \varphi(g) - g \in \operatorname{Hom}(P,M)^{\mathcal{O}}$.
\end{proof}

\subsection{Dimension calculations}
We now compute the dimensions of $H^1_{\operatorname{SD}}$. Our proof will use that for $M \in \operatorname{Mod}^{\operatorname{BK}}_k(\mathcal{O})$ there are exact sequences
$$
0 \rightarrow \operatorname{gr}^{i-p}(M) \xrightarrow{u} \operatorname{gr}^i(M) \rightarrow \operatorname{gr}^i(M_k) \rightarrow 0
$$
(here $\operatorname{gr}^i(N) = F^iN/F^{i+1}N$ for any filtered module $N$). The exactness of this sequence follows from the observation $F^iM \cap uM = u(F^{i-p}M)$.

\begin{lemma}\label{dimfirststep}
	Let $M$ be an object of $\operatorname{Mod}^{\operatorname{BK}}_k(\mathcal{O})$. Then both $H^1_{\operatorname{SD}}(M)$ and $H^0(M)$ are finite and
	$$
	\chi(M) - \chi(uM) = \sum_{i \not\in p\mathbb{Z}_{\leq 0} \cup \mathbb{Z}_{\geq 0} } \operatorname{dim}_{\mathbb{F}} \operatorname{gr}^i(M_k)
	$$
	where $\chi(M) := \operatorname{dim}_{\mathbb{F}} H^1_{\operatorname{SD}}(M) - \operatorname{dim}_{\mathbb{F}} H^0(M)$.
\end{lemma}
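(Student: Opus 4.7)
My plan is to first establish finiteness of $H^{0}(M)$ and $H^{1}_{\operatorname{SD}}(M)$, then identify $\chi(M)-\chi(uM)$ with the Euler characteristic of a small quotient complex, and finally match that to the graded-piece sum using the two short exact sequences at our disposal.

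Finiteness of $H^{0}(M)$ is standard: it injects into $(M[\tfrac{1}{u}])^{\varphi=1}$, an $\mathbb{F}_{p}$-vector space whose dimension is bounded by the rank of $M$ via the equivalence between étale $\varphi$-modules over $\mathbb{F}((u))$ and continuous $\mathbb{F}$-representations of $G_{K_{\infty}}$. For $H^{1}_{\operatorname{SD}}(M)=M/(\varphi-1)F^{0}M$ I will show $(\varphi-1)F^{0}M\supseteq u^{N}M$ for $N$ large. Given $x\in u^{N}M$, set $y:=-\sum_{k\geq 0}\varphi^{k}(x)$; if $\varphi(M)\subseteq u^{-N'}M$ and $N(p-1)>N'$, a straightforward induction shows $\varphi^{k}(x)\in u^{\epsilon p^{k}+N'/(p-1)}M$ with $\epsilon=N-N'/(p-1)>0$, so the series converges in $M$, and $\varphi(y)=y+x\in M$ places $y$ in $F^{0}M$ with $(\varphi-1)y=x$. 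Hence $H^{1}_{\operatorname{SD}}(M)$ is a quotient of $M/u^{N}M$.

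The key observation for the main formula is that $F^{0}(uM)\subseteq F^{0}M$: if $m\in F^{0}(uM)$, then $\varphi(m)\in uM\subseteq M$, so $m\in F^{0}M$. Writing $C(N):=[F^{0}N\xrightarrow{\varphi-1}N]$, this yields a short exact sequence of two-term complexes
\[
0\to C(uM)\to C(M)\to\tilde Q\to 0,\qquad\tilde Q:=\bigl[F^{0}M/F^{0}(uM)\xrightarrow{\overline{\varphi-1}}M_{k}\bigr],
\]
and additivity of Euler characteristics in the long exact sequence will give $\chi(M)-\chi(uM)=\chi(\tilde Q)$. Combining $F^{j}M\cap uM=uF^{j-p}M$ (recalled in the text) with the tautological $F^{0}(uM)=F^{1}M\cap uM$ yields $F^{0}(uM)=uF^{1-p}M$, and the inclusion $F^{1}M\subseteq F^{0}M$ produces the further short exact sequence $0\to F^{1}M_{k}\to F^{0}M/F^{0}(uM)\to\operatorname{gr}^{0}M\to 0$, where $F^{1}M_{k}$ is the image of $F^{1}M$ in $M_{k}$. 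Both $F^{1}M_{k}\subseteq M_{k}$ and $\operatorname{gr}^{0}M$ (killed by $u$) are finite-dimensional, so $\tilde Q$ is a bounded complex of finite-dimensional vector spaces with $\chi(\tilde Q)=\dim M_{k}-\dim F^{1}M_{k}-\dim\operatorname{gr}^{0}M$.

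To conclude, the filtration $F^{\bullet}M_{k}$ on $M_{k}$ gives $\dim M_{k}-\dim F^{1}M_{k}=\sum_{i\leq 0}\dim\operatorname{gr}^{i}M_{k}$, and iterating $0\to\operatorname{gr}^{i-p}M\xrightarrow{u}\operatorname{gr}^{i}M\to\operatorname{gr}^{i}M_{k}\to 0$ at $i=0,-p,-2p,\ldots$ (the chain terminates because $\operatorname{gr}^{i}M=0$ for $i$ sufficiently negative, since $F^{i}M=M$ in that range) rewrites $\dim\operatorname{gr}^{0}M=\sum_{k\geq 0}\dim\operatorname{gr}^{-kp}M_{k}$. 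Subtracting eliminates exactly the $p$-divisible degrees from the first sum, leaving $\sum_{i<0,\,p\nmid i}\dim\operatorname{gr}^{i}M_{k}$. The main obstacle I anticipate is ensuring the telescoping series in the finiteness step converges inside $M$ rather than merely in $M[\tfrac{1}{u}]$; the explicit $u$-adic estimate using the pole $N'$ of $\varphi$ is essential here, and it is also what makes the whole argument go through without any strong divisibility assumption on $M$.
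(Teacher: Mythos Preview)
Your argument is correct and follows essentially the same route as the paper's: the same short exact sequence of two-term complexes (equivalently, the snake lemma for $uM\hookrightarrow M$), the same identification $F^{0}M/F^{0}(uM)\cong F^{1}M_{k}\oplus\operatorname{gr}^{0}M$, and the same convergent-series argument for surjectivity of $\varphi-1$. The only cosmetic difference is that you prove finiteness of $H^{1}_{\operatorname{SD}}(M)$ directly by showing $u^{N}M\subset(\varphi-1)F^{0}M$, whereas the paper first uses the long exact sequence to reduce to $H^{1}_{\operatorname{SD}}(u^{n}M)=0$ for $n\gg 0$ and then runs the identical series there.
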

\begin{proof}
	As $H^0(M) \subset T(M)$ finiteness of $H^0(M)$ is clear. For the rest of the proof consider the inclusion $uM \rightarrow M$. It induces a commutative diagram whose rows are exact.
	$$
	\begin{tikzcd}[column sep =small, row sep=small]
	0 \arrow{r} & F^0(uM) \arrow{r} \arrow{d}{\varphi-1} & F^0M \arrow{r} \arrow{d}{\varphi-1} & Q_1 \arrow{r} \arrow{d}{\alpha} & 0 \\
	0 \arrow{r} & uM \arrow{r} & M \arrow{r} & M_k \arrow{r} & 0
	\end{tikzcd}
	$$
	The snake lemma yields a long exact sequence
	$$
	0 \rightarrow H^0(uM) \rightarrow H^0(M) \rightarrow \operatorname{ker}\alpha \rightarrow H^1_{\operatorname{SD}}(uM) \rightarrow H^1_{\operatorname{SD}}(M) \rightarrow \operatorname{coker}\alpha \rightarrow 0
	$$
	Provided we have finiteness of the $H^1_{\operatorname{SD}}(uM)$ and $H^1_{\operatorname{SD}}(M)$, consideration of the alternating sums of the dimensions in this long exact sequence gives that $\chi(N) - \chi(uN) = \operatorname{dim}_{\mathbb{F}} \operatorname{coker}\alpha - \operatorname{dim}_{\mathbb{F}} \operatorname{ker}\alpha$, which is equal to the $\mathbb{F}$-dimension of $M_k$ minus the $\mathbb{F}$-dimension of $Q_1$. We claim that non-canonically
	\begin{equation}\label{exp}
	Q_1 = \bigoplus_{i \in p\mathbb{Z}_{\leq 0} \cup \mathbb{Z}_{\geq 1}} \operatorname{gr}^i(M_{k})
	\end{equation}
	as an $\mathbb{F}$-vector space. This will imply the second part of the lemma.
	
	To verify \eqref{exp} choose a splitting (as $\mathbb{F}$-vector spaces) of the exact sequence $0 \rightarrow F^1 M \rightarrow F^0M \rightarrow \operatorname{gr}^0(M) \rightarrow 0$. Then we can write $F^0 M  = F^1 M \bigoplus \operatorname{gr}^0(M)$. Observe that $F^0(uM) = (uM) \cap F^1 M$ and that this is the kernel of $F^1 M \rightarrow F^1M_{k}$. Therefore
	$$
	F^0M / F^0(uM) = F^1 M_{k} \bigoplus \operatorname{gr}^0(M)
	$$ 
	Choosing splitting's of $0 \rightarrow F^{i+1} M_{k} \rightarrow F^i M_{k} \rightarrow \operatorname{gr}^i(M_{k}) \rightarrow 0$ allows us to identify  the first term of the above sum with $\bigoplus_{i \in \mathbb{Z}_{\geq 1}} \operatorname{gr}^i(M_{k})$. For the second term: splitting the exact sequence $0 \rightarrow \operatorname{gr}^{i-p}(M) \rightarrow \operatorname{gr}^i(M) \rightarrow \operatorname{gr}^i(M_{k}) \rightarrow 0$ described at the beginning of the subsection shows that $\operatorname{gr}^0(M) = \bigoplus_{i \in p\mathbb{Z}_{\leq 0}} \operatorname{gr}^i(M_k)$. This verifies \eqref{exp}.
	
	It remains to prove that $H^1_{\operatorname{SD}}(M)$ is finite. Observe that, except for $H^1_{\operatorname{SD}}(M)$ and $H^1_{\operatorname{SD}}(uM)$, all the terms in the long exact sequence above are finite. Therefore finiteness of $H^1_{\operatorname{SD}}(M)$ can be deduced from finiteness of $H^1_{\operatorname{SD}}(u^nM)$ for large enough $n$. In fact $H^1_{\operatorname{SD}}(u^nM)$ will vanish for $n$ large enough, as we now show. We need the following lemma, whose proof is straightforward.
	\begin{lemma}\label{multbyu}
		Multiplication by $u$ describes a bijection $F^{i+1-p}(M) \rightarrow F^i(uM)$.
	\end{lemma}
	Continuing with the proof of Lemma~\ref{dimfirststep}, Lemma~\ref{multbyu} implies that $F^1N = N$ when $N = u^nM$ and $n$ is large enough. In this case to show $H^1_{\operatorname{SD}}(N) = 0$ it suffices to show $\varphi -1$ is surjective as a map $N \rightarrow N$. To do this note that for any $x \in N$ we have $\varphi(x) \in uN$ because $F^1N = N$; thus $\sum_{i \geq 0} \varphi^i(-x)$ converges to an element $y \in N$ which satisfies $\varphi(y) - y = x$. This shows surjectivity of $\varphi-1$ and completes the proof.
\end{proof}
\begin{corollary}\label{dimsecondstep}
	Let $M$ be an object of $\operatorname{Mod}^{\operatorname{BK}}_k(\mathcal{O})$ and assume that $\operatorname{gr}^i(M_k) =0$ for $i < -p$. Then
	$$
	\chi(M) = \sum_{i <0} \operatorname{dim}_{\mathbb{F}} \operatorname{gr}^i(M_k)
	$$
	(with $\chi$ as in Lemma~\ref{dimfirststep}).
\end{corollary}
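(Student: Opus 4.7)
The plan is to apply Lemma~\ref{dimfirststep} iteratively to $M$, $uM$, $u^2 M$, \ldots\ and telescope. Under the vanishing hypothesis on $\operatorname{gr}^i(M_k)$ only the first two applications contribute, so the calculation collapses after two steps.

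I would first establish the filtration-shift identity $\operatorname{gr}^i((u^j M)_k) \cong \operatorname{gr}^{i-j(p-1)}(M_k)$. For $j=1$ this follows directly from Lemma~\ref{multbyu}: the bijection $F^{i-p+1}(M) \xrightarrow{u} F^i(uM)$ means $F^i(uM) = u \cdot F^{i-p+1}(M)$, so the $\mathbb{F}$-linear isomorphism $M/uM \to uM/u^2 M$ sending $m \mapsto um$ carries $F^j(M_k)$ to $F^{j+p-1}((uM)_k)$; passing to associated gradeds gives the claim, and iteration yields the general case.

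Next I would use the hypothesis $\operatorname{gr}^i(M_k)=0$ for $i<-p$ to deduce $\operatorname{gr}^i((u^j M)_k)=0$ for $i<-p+j(p-1)$. For $j \geq 2$ (and $p \geq 2$) this lower bound is $\geq 0$, so Lemma~\ref{dimfirststep} applied to $u^j M$ gives $\chi(u^j M) = \chi(u^{j+1} M)$ for all $j \geq 2$. Combined with the vanishing $\chi(u^n M)=0$ for $n$ large established in the proof of Lemma~\ref{dimfirststep} (which I would supplement by noting $H^0(u^n M) = H^0(M) \cap u^n M = 0$ for large $n$, since $H^0(M) \subset T(M)$ is a finite $\mathbb{F}_p$-vector space and its elements have bounded $u$-adic order), this forces $\chi(u^2 M) = 0$.

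Finally, I would telescope
$$\chi(M) = \bigl(\chi(M)-\chi(uM)\bigr) + \bigl(\chi(uM)-\chi(u^2 M)\bigr)$$
and evaluate each difference via Lemma~\ref{dimfirststep}. The first difference collapses to $\sum_{i=-p+1}^{-1} \dim_{\mathbb{F}} \operatorname{gr}^i(M_k)$: the hypothesis kills contributions from $i < -p$, and $i=-p$ is excluded from the Lemma~\ref{dimfirststep} sum since $-p \in p\mathbb{Z}_{\leq 0}$. The second difference reduces to the single term $\dim_{\mathbb{F}} \operatorname{gr}^{-1}((uM)_k) = \dim_{\mathbb{F}} \operatorname{gr}^{-p}(M_k)$: the support of $(uM)_k$ meets $\mathbb{Z}_{<0}$ only at $i=-1$, and $-1 \notin p\mathbb{Z}_{\leq 0}$ for $p \geq 2$. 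Summing yields $\sum_{i=-p}^{-1} \dim_{\mathbb{F}} \operatorname{gr}^i(M_k) = \sum_{i<0} \dim_{\mathbb{F}} \operatorname{gr}^i(M_k)$.

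There is no serious obstacle; this is essentially a bookkeeping argument. The only point that requires care is noticing that the $i=-p$ term, absent from the first application because $-p$ is a non-positive multiple of $p$, is recovered exactly once by the second application after the filtration shift maps $-p \mapsto -1$.
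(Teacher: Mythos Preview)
Your proposal is correct and follows essentially the same approach as the paper: both telescope $\chi(M)$ using Lemma~\ref{dimfirststep} and the filtration shift from Lemma~\ref{multbyu}, observe that only the $n=0$ and $n=1$ steps contribute under the hypothesis, and recover the missing $\operatorname{gr}^{-p}(M_k)$ term from the second step. The paper packages the telescoping as a single sum $\chi(M)=\sum_{n\geq 0}\bigl(\chi(u^nM)-\chi(u^{n+1}M)\bigr)$ and then notes all terms with $n\geq 2$ vanish, while you first establish $\chi(u^2M)=0$ and then add the two remaining differences, but this is purely a presentational difference.
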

\begin{proof}
	Lemma~\ref{multbyu} implies $\operatorname{dim}_{\mathbb{F}} \operatorname{gr}^i((uM)_k) = \operatorname{dim}_{\mathbb{F}} \operatorname{gr}^{i+1-p}(M_k)$. Note also that $H^0(u^nM) = 0$ for large enough $n$. Thus Lemma~\ref{dimfirststep} shows (without using that $\operatorname{gr}^i(M_k) = 0$ for $i < -p$)
	$$
	\chi(M) = \sum_{n \geq 0} \big( \sum_{i \not\in p\mathbb{Z}_{\leq 0} \cup \mathbb{Z}_{\geq 0}} \operatorname{dim}_{\mathbb{F}} \operatorname{gr}^{i+n(1-p)}(M_k) \big)
	$$
	Since $\operatorname{gr}^i(M_k) = 0$ for $i < -p$ the inner sum for $n=0$ counts the dimensions of $\operatorname{gr}^i(M_k)$ for $i <0$ and $\neq -p$ and the inner sum for $n=1$ counts the dimension of $\operatorname{gr}^{-p}(M_k)$. The remaining inner sums are all zero, which finishes the proof.
\end{proof}
\begin{proposition}\label{DIMFORM}
	If $P$ and $M$ are objects of $\operatorname{Mod}^{\operatorname{SD}}_k(\mathcal{O})$ then
	$$
	\begin{aligned}
	\operatorname{dim}_{\mathbb{F}} \operatorname{Ext}^1_{\operatorname{SD}}(P,M) -& \operatorname{dim}_{\mathbb{F}}\operatorname{Hom}_{\operatorname{BK}}(P,M) =  \\
	& \sum_\tau \operatorname{Card}(\lbrace i-j < 0 \mid i \in \operatorname{Weight}_\tau(M),j \in \operatorname{Weight}_\tau(P) \rbrace)
	\end{aligned}
	$$
\end{proposition}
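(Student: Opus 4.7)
The plan is to identify the LHS with the Euler characteristic $\chi(H)$ of $H := \operatorname{Hom}(P,M)^{\mathcal{O}}$ in the sense of Lemma~\ref{dimfirststep}, and then evaluate this via Corollary~\ref{dimsecondstep}. The identification is immediate from the setup of Subsection~\ref{cohom}: Construction~\ref{H1andExt} gives $\operatorname{Ext}^1_{\mathbb{F}}(P,M) \cong H^1(H)$, and the definition of $\operatorname{Ext}^1_{\operatorname{SD}}$ together with the injectivity of $H^1_{\operatorname{SD}}(H) \hookrightarrow H^1(H)$ restricts this to $\operatorname{Ext}^1_{\operatorname{SD}}(P,M) \cong H^1_{\operatorname{SD}}(H)$. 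Combined with $\operatorname{Hom}_{\operatorname{BK}}(P,M) = H^0(H)$ this yields LHS $= \chi(H)$.

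The heart of the argument is to compute $\operatorname{dim}_{\mathbb{F}} \operatorname{gr}^k H_{k,\sigma}$ explicitly for each embedding $\sigma$. For a fixed $\tau$, apply Lemma~\ref{equivcond} to choose a basis $(f^M_i)$ of $M_\tau$ and integers $r_i$ (enumerating $\operatorname{Weight}_\tau(M)$) such that $(u^{r_i}f^M_i)$ is an $\mathbb{F}[[u^p]]$-basis of $\varphi_M(M_{\tau \circ \varphi})$; lifting along $\varphi_M$ produces a basis $(e^M_i)$ of $M_{\tau \circ \varphi}$ with $\varphi_M(e^M_i) = u^{r_i}f^M_i$. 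Make analogous choices $(e^P_j), (f^P_j)$ for $P$ with weights $s_j = \operatorname{Weight}_\tau(P)$. Since $H_\sigma = \operatorname{Hom}_{\mathbb{F}[[u]]}(P_\sigma, M_\sigma)$ the resulting dual bases yield bases $(e^H_{ij})$ of $H_{\tau \circ \varphi}$ and $(f^H_{ij})$ of $H_\tau$. Direct computation from $\varphi_H(f) = \varphi_M \circ f \circ \varphi_P^{-1}$ gives
$$
\varphi_H(e^H_{ij}) = u^{r_i - s_j} f^H_{ij}.
$$
For general $h = \sum c_{ij}(u) e^H_{ij} \in H_{\tau \circ \varphi}$, semi-linearity then yields $\varphi_H(h) = \sum c_{ij}(u^p) u^{r_i - s_j} f^H_{ij}$. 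Requiring $\varphi_H(h) \in u^k H_\tau$ amounts to demanding $c_{ij}(u^p) \in u^{k-(r_i-s_j)} \mathbb{F}[[u]]$ whenever $r_i - s_j < k$; as $c_{ij}(u^p) \in \mathbb{F}[[u^p]]$ and the exponent is strictly positive this forces $c_{ij}(0) = 0$. Hence $F^k H_{k,\tau \circ \varphi}$ is spanned by the classes of those $e^H_{ij}$ with $r_i - s_j \geq k$, so $\operatorname{dim}_{\mathbb{F}} \operatorname{gr}^k H_{k, \tau \circ \varphi} = \#\{(i,j) : r_i - s_j = k\}$.

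Since $P$ and $M$ are strongly divisible we have $r_i, s_j \in [0,p]$, so $r_i - s_j \in [-p,p]$ and in particular $\operatorname{gr}^k H_k = 0$ for $k < -p$. Corollary~\ref{dimsecondstep} therefore applies, giving
$$
\chi(H) = \sum_{k<0} \operatorname{dim}_{\mathbb{F}} \operatorname{gr}^k(H_k) = \sum_\sigma \sum_{k<0} \operatorname{dim}_{\mathbb{F}} \operatorname{gr}^k H_{k,\sigma}.
$$
Reindexing the outer sum by $\sigma = \tau \circ \varphi$ (a bijection on $\operatorname{Hom}_{\mathbb{F}_p}(k,\mathbb{F})$) and substituting the formula from the previous paragraph produces exactly the RHS, namely $\sum_\tau \#\{(i,j) : r_i - s_j < 0,\ r_i \in \operatorname{Weight}_\tau(M),\ s_j \in \operatorname{Weight}_\tau(P)\}$.

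The main obstacle is the explicit bases calculation in the second paragraph: one must correctly track the semi-linearity of $\varphi_H$, be careful about the $\tau$ versus $\tau \circ \varphi$ indexing, and verify that the only way for a constant-term contribution $c_{ij}(0)$ to survive inside $F^k$ is to have $r_i - s_j \geq k$. Everything else is bookkeeping from Construction~\ref{H1andExt}, Corollary~\ref{dimsecondstep}, and the fact that $\operatorname{Mod}^{\operatorname{SD}}_k(\mathcal{O})$ constrains weights to $[0,p]$.
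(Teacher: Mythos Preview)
Your proof is correct and follows essentially the same strategy as the paper: set $H=\operatorname{Hom}(P,M)^{\mathcal{O}}$, identify the left-hand side with $\chi(H)$, exhibit adapted bases so that the $\tau$-weights of $H$ are the differences $r_i-s_j$, check these lie in $[-p,p]$, and apply Corollary~\ref{dimsecondstep}. The only cosmetic difference is that the paper verifies $H$ satisfies the conditions of Lemma~\ref{equivcond} (so that $\operatorname{gr}^i H_k \cong \operatorname{gr}^i H^\varphi_k$) and reads off the dimensions from $\operatorname{Weight}_\tau(H)$, whereas you compute $\operatorname{gr}^k H_{k,\tau\circ\varphi}$ directly from the definition of $F^kH$; these are two phrasings of the same calculation.
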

\begin{proof}
	First we show
	$$
	\lbrace i - j\mid i \in \operatorname{Weight}_{\tau}(M),j \in \operatorname{Weight}_{\tau}(P) \rbrace = \operatorname{Weight}_\tau(\operatorname{Hom}(P,M)^{\mathcal{O}})
	$$
	To see this choose a basis $(m_i)$ of $M_\tau$ such that $(u^{r_i}m_i)$ is a basis of $\varphi(M)_\tau$. The integers $r_i$ are the elements of $\operatorname{Weight}_\tau(M)$. Likewise choose a basis $(p_j)$ of $P_\tau$ such that $(u^{s_j}p_j)$ are a basis of $\varphi(P)_\tau$. One checks that if $f_{ij}$ is the element of $\operatorname{Hom}(P,M)^{\mathcal{O}}$ which is zero everywhere except that it maps $p_j \mapsto m_i$ then the $f_{ij}$ form a basis of $\operatorname{Hom}(P,M)^{\mathcal{O}}_\tau$ and $u^{r_i-s_j}f_{ij}$ forms a basis of $\varphi(\operatorname{Hom}(P,M)^{\mathcal{O}})_\tau$. Now appeal to the comment made after Lemma~\ref{equiv}. 
	
	The previous paragraph shows that $\operatorname{Hom}(P,M)^{\mathcal{O}}$ satisfies the equivalent conditions of Lemma~\ref{equiv} and so $\operatorname{dim}_{\mathbb{F}}\operatorname{gr}^i(\operatorname{Hom}(P,M)^{\mathcal{O}}_k) = \operatorname{dim}_{\mathbb{F}} \operatorname{gr}^i(\operatorname{Hom}(P,M)^{\mathcal{O},\varphi}_k)$. Since $\operatorname{Weight}(\operatorname{Hom}(P,M)^{\mathcal{O}}) \subset [-p,p]$ it follows that $\operatorname{gr}^i(\operatorname{Hom}(P,M)^{\mathcal{O}}_k) = 0$ for $i<-p$. Thus Corollary~\ref{dimsecondstep} applies with $M = \operatorname{Hom}(P,M)^{\mathcal{O}}$. Using Construction~\ref{H1andExt} to identify $\operatorname{Ext}^1_{\operatorname{SD}}(P,M)$ and $H^1_{\operatorname{SD}}(\operatorname{Hom}(P,M)^{\mathcal{O}})$ the result follows.
\end{proof}
\begin{remark}
	This proposition should be compared with the number of possible extensions described in \cite[Theorem 7.9]{GLS}.
\end{remark}

\section{Lifting extensions}\label{section5}
In this section we show how to produce crystalline lifts of some exact sequences in $\operatorname{Mod}^{\operatorname{SD}}_k(\mathcal{O})$. We maintain the notation from the previous two sections.

\subsection{Isogeny categories of Breuil--Kisin modules} One difficulty which arises when producing lifts of extensions in $\operatorname{Mod}^{\operatorname{SD}}_k(\mathcal{O})$ comes from the fact that $T \mapsto M(T)$ is not an exact functor. However, as the following lemma shows, this functor does become exact after inverting $p$.
\begin{lemma}\label{exact}
	If $0 \rightarrow T_1 \rightarrow T \rightarrow T_2 \rightarrow 0$ is an exact sequence of crystalline $\mathbb{Z}_p$-lattices then $0 \rightarrow M(T_1)[\frac{1}{p}] \rightarrow M(T)[\frac{1}{p}] \rightarrow M(T_2)[\frac{1}{p}] \rightarrow 0$ is an exact sequence of $\mathfrak{S}[\frac{1}{p}]$-modules.
\end{lemma}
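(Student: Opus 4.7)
The plan is to invoke Kisin's equivalence in the isogeny category of Breuil--Kisin modules to reduce the lemma to the exactness of standard functors. The characterization $T(M(T)) \cong T$ in Theorem~\ref{kisin} implies that $M(T)[\tfrac{1}{p}]$ is functorial in, and determined by, $V := T \otimes_{\mathbb{Z}_p} \mathbb{Q}_p$ alone; indeed any two crystalline $\mathbb{Z}_p$-lattices in the same $\mathbb{Q}_p$-representation have isomorphic associated Breuil--Kisin modules after inverting $p$. Thus, writing $V_i := T_i \otimes \mathbb{Q}_p$ and applying $-\otimes \mathbb{Q}_p$ to the exact sequence of the lemma, we get an exact sequence $0 \to V_1 \to V \to V_2 \to 0$ of crystalline $G_K$-representations on $\mathbb{Q}_p$-vector spaces, and it suffices to prove that the induced functor $V \mapsto M(V) := M(T)[\tfrac{1}{p}]$ sends this to a short exact sequence of $\mathfrak{S}[\tfrac{1}{p}]$-modules.

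For this I would factor Kisin's functor through the filtered $\varphi$-module $D_{\operatorname{crys}}(V) = (V \otimes_{\mathbb{Q}_p} B_{\operatorname{crys}})^{G_K}$. On the one hand, $D_{\operatorname{crys}}$ is exact on crystalline representations (a standard consequence of the faithful flatness of $B_{\operatorname{crys}}$ over its $G_K$-fixed field, together with the fact that $V$ being crystalline forces $V_1$ and $V_2$ to be crystalline, so that the dimension count in Fontaine's admissibility criterion is preserved). On the other hand, by Kisin's main theorem \cite{Kis06}, the recipe $D \mapsto M(D)$ giving $M(V)$ from $D_{\operatorname{crys}}(V)$ is an equivalence of categories from weakly admissible filtered $\varphi$-modules over $K_0$ onto a full subcategory of finite projective $\mathfrak{S}[\tfrac{1}{p}]$-modules with $\varphi$-structure; this subcategory is abelian, and the equivalence (being an equivalence of abelian categories) is exact. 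Composing two exact functors yields the desired exactness of $0 \to M(T_1)[\tfrac{1}{p}] \to M(T)[\tfrac{1}{p}] \to M(T_2)[\tfrac{1}{p}] \to 0$.

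I do not anticipate a serious obstacle, as the lemma is essentially a repackaging of Kisin's theory in the isogeny category. The main subtlety is verifying that the target category for $M(-)$ after inverting $p$ is genuinely abelian (so that kernels, images, and cokernels computed there agree with those computed as $\mathfrak{S}[\tfrac{1}{p}]$-modules); this is precisely what Kisin's equivalence with weakly admissible filtered $\varphi$-modules guarantees. As a sanity check one could also verify injectivity of $M(T_1)[\tfrac{1}{p}] \to M(T)[\tfrac{1}{p}]$ and vanishing of the cokernel of $M(T)[\tfrac{1}{p}] \to M(T_2)[\tfrac{1}{p}]$ directly by applying the exact functor $T(-)$ of Lemma~\ref{BKF} and using Kisin's full faithfulness to detect zero objects via their associated $G_{K_\infty}$-representation.
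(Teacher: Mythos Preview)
Your strategy---factor through $D_{\operatorname{crys}}$ and use that it is exact on crystalline representations---is exactly the paper's. The gap is in the step you yourself flag as the main subtlety: you assert that Kisin's equivalence guarantees kernels and cokernels in the essential image of $M(-)$ agree with those computed as $\mathfrak{S}[\tfrac{1}{p}]$-modules, but an equivalence of categories onto a full subcategory of a module category says nothing about how that subcategory's intrinsic abelian structure compares with kernels and cokernels taken in the ambient module category. That comparison is precisely the content of the lemma, so invoking the equivalence to justify it is circular.

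The paper fills this gap with a concrete faithful flatness argument rather than abstract category theory. Kisin's construction (via \cite[Lemma~1.2.6]{Kis06}) yields a $\varphi$-equivariant identification
\[
M(T) \otimes_{\mathfrak{S}} \mathcal{O}^{\operatorname{rig}}[\tfrac{1}{\lambda}] \;\cong\; D_{\operatorname{crys}}(V) \otimes_{K_0} \mathcal{O}^{\operatorname{rig}}[\tfrac{1}{\lambda}],
\]
where $\mathcal{O}^{\operatorname{rig}} \subset K_0[[u]]$ is the ring of power series converging on the open unit disk and $\lambda = \prod_{n \geq 0} \varphi^n(E(u)/E(0))$. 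Exactness of $D_{\operatorname{crys}}$ then gives exactness of the $M(T_i)$-sequence after base change to $\mathcal{O}^{\operatorname{rig}}[\tfrac{1}{\lambda}]$, and one descends to $\mathfrak{S}[\tfrac{1}{p}]$ by checking that $\mathcal{O}^{\operatorname{rig}}[\tfrac{1}{\lambda}]$ is faithfully flat over $\mathfrak{S}[\tfrac{1}{p}]$. Since $\mathfrak{S}[\tfrac{1}{p}]$ is a PID, this reduces to showing a non-unit $f \in \mathfrak{S}[\tfrac{1}{p}]$ remains a non-unit in $\mathcal{O}^{\operatorname{rig}}[\tfrac{1}{\lambda}]$, which holds because such an $f$ has a nonzero finite set of zeros in the open disk, whereas a unit of $\mathcal{O}^{\operatorname{rig}}[\tfrac{1}{\lambda}]$ has either no zeros or infinitely many (at the zeros of $\lambda$). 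Your proposed sanity check via the functor $T(-)$ runs into the same issue: it would require knowing that $- \otimes_{\mathfrak{S}} W(C^\flat)$ detects exactness, i.e.\ a faithful flatness statement you have not supplied.
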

\begin{proof}
	For this we must use that the functor in Theorem~\ref{kisin} satisfies an additional property; namely if $T$ is a crystalline $\mathbb{Z}_p$-lattice inside $V = T[\frac{1}{p}]$ then there exists a $\varphi$-equivariant identification 
	$$
	M(T) \otimes_{\mathfrak{S}} \mathcal{O}^{\operatorname{rig}}[\tfrac{1}{\lambda}] \cong D_{\operatorname{crys}}(V) \otimes_{K_0} \mathcal{O}^{\operatorname{rig}}[\tfrac{1}{\lambda}]
	$$
	This is a consequence of \cite[Lemma 1.2.6]{Kis06}. Here $\mathcal{O}^{\operatorname{rig}} \subset K_0[[u]]$ is the subring of power series which converge on the open unit disk, and $\lambda \in \operatorname{\mathcal{O}}^{\operatorname{rig}}$ is the convergent product $\prod_{n=0}^\infty \varphi^n(E(u)/E(0))$. Since $V \mapsto D_{\operatorname{crys}}(V)$ is an exact functor it suffices to show that $\mathfrak{S}[\frac{1}{p}]$ is faithfully flat over $\mathcal{O}^{\operatorname{rig}}[\frac{1}{\lambda}]$. Since $\mathfrak{S}[\frac{1}{p}]$ is a principal ideal domain faithful flatness follows because if $f \in \mathfrak{S}[\frac{1}{p}]$ is not a unit then $f$ is not a unit in $\mathcal{O}^{\operatorname{rig}}[\frac{1}{\lambda}]$ (if it was then $f$ must have either no zeroes on the open unit disk, or infinitely many, at the zeroes of $\lambda$).
\end{proof}

Consider the category $\operatorname{Mod}^{\operatorname{BK-iso}}_K(\mathcal{O})$ of $\mathfrak{S}_E :=\mathfrak{S} \otimes_{\mathbb{Z}_p} E$-modules $M$ equipped with isomorphisms $\varphi_M\colon M \otimes_{\varphi,\mathfrak{S}_E} \mathfrak{S}_E[\frac{1}{E}] \cong M[\frac{1}{E}]$, such that $\varphi$-equivariantly $M \cong M^\circ[\frac{1}{p}]$ for some $M^\circ \in \operatorname{Mod}^{\operatorname{BK}}_K(\mathcal{O})$. This category can be identified with the isogeny category of $\operatorname{Mod}^{\operatorname{BK}}_K(\mathcal{O})$, in particular it is abelian. 

By \cite[Proposition 4.3]{BMS} every object of $\operatorname{Mod}^{\operatorname{BK-iso}}_K(\mathcal{O})$ is free as an $\mathfrak{S}$-module; arguing as in Corollary~\ref{freecoeff} we see they are free as $\mathfrak{S} \otimes_{\mathbb{Z}_p} E$-modules.

\begin{corollary}\label{isofunctor}
	The functor $T \mapsto M(T)$ induces an exact fully faithful functor $V \mapsto M(V)$ from the category of crystalline $E$-representations to $\operatorname{Mod}^{\operatorname{BK-iso}}_K(\mathcal{O})$.
\end{corollary}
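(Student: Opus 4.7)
The plan is to define $M(V) := M(T) \otimes_{\mathcal{O}} E$ for any choice of $G_K$-stable $\mathcal{O}$-lattice $T \subset V$, and then verify in turn that this is well-defined, that it upgrades to a functor, and that the resulting functor is both exact and fully faithful. The target object lives in $\operatorname{Mod}^{\operatorname{BK-iso}}_K(\mathcal{O})$ by construction, so these are the only issues.

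Well-definedness on objects is handled by commensurability: any two lattices $T,T'$ in $V$ satisfy $p^n T \subset T' \subset p^{-n} T$ for some $n \geq 0$, so the induced maps $M(T) \to M(T') \to M(p^{-n}T)$ have composite equal to multiplication by $p^{2n}$ and hence become mutually inverse isomorphisms in $\operatorname{Mod}^{\operatorname{BK-iso}}_K(\mathcal{O})$ after inverting $p$. The same scaling trick turns a morphism $f\colon V_1 \to V_2$ of crystalline $E$-representations into a morphism of $\mathcal{O}$-lattices (replace $T_1$ by $p^N T_1$ so that $f(p^N T_1) \subset T_2$), to which one then applies Kisin's functor.

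Fully faithfulness reduces to Theorem~\ref{kisin}. Faithfulness is immediate: a nonzero $f\colon V_1 \to V_2$ restricts, after rescaling, to a nonzero morphism of $\mathcal{O}$-lattices, and Theorem~\ref{kisin} sends this to a nonzero map of Breuil--Kisin modules. For fullness, given $g\colon M(V_1) \to M(V_2)$ in $\operatorname{Mod}^{\operatorname{BK-iso}}_K(\mathcal{O})$, choose any $\mathcal{O}$-lattices $T_i \subset V_i$. Since both $M(T_1)$ and $M(T_2)$ are finitely generated, there exists $N$ such that $g$ carries $p^N M(T_1)$ into $M(T_2)$, and Theorem~\ref{kisin} lifts the resulting integral morphism to some $p^N T_1 \to T_2$; inverting $p$ gives the required $V_1 \to V_2$.

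The main substance is exactness, and this is where Lemma~\ref{exact} does the real work. Given a short exact sequence $0 \to V_1 \to V \to V_2 \to 0$ of crystalline $E$-representations, choose any $G_K$-stable $\mathcal{O}$-lattice $T \subset V$ and set $T_1 := T \cap V_1$ and $T_2 := T / T_1$. Both are $\mathcal{O}$-stable and $G_K$-stable, and each spans its corresponding $V_i$ (for $T_1$ this uses that any $v \in V_1$ has $p^n v \in T$ for some $n$, and hence $p^n v \in T_1$; for $T_2$ it is immediate). They are crystalline as lattices in subquotients of a crystalline representation, and fit into a short exact sequence $0 \to T_1 \to T \to T_2 \to 0$ of crystalline $\mathcal{O}$-lattices. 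Applying Lemma~\ref{exact} (to the underlying $\mathbb{Z}_p$-lattices) yields exactness of $0 \to M(T_1)[\tfrac{1}{p}] \to M(T)[\tfrac{1}{p}] \to M(T_2)[\tfrac{1}{p}] \to 0$ in $\operatorname{Mod}^{\operatorname{BK-iso}}_K(\mathcal{O})$, which is the required sequence. This step is the crux because $M$ is not exact on integral lattices; one genuinely needs the rigid-analytic comparison underlying Lemma~\ref{exact} to recover exactness after inverting $p$.
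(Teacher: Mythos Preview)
Your proof is correct and follows the same approach as the paper, which simply cites Lemma~\ref{exact} for exactness and the full faithfulness of $T \mapsto M(T)$ from Theorem~\ref{kisin}. You have spelled out the implicit details (well-definedness via commensurability, functoriality and fullness via rescaling into an integral lattice) that the paper's one-line proof leaves to the reader.
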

\begin{proof}
	This follows from Lemma~\ref{exact}, and the fact that $T \mapsto M(T)$ is fully faithful.
\end{proof}

\subsection{More ext groups} As in Subsection~\ref{cohom}, if $M \in \operatorname{Mod}^{\operatorname{BK}}_K(\mathcal{O})$ we can define $H^1(M)$ as the cokernel of $M \xrightarrow{\varphi -1} M[\frac{1}{E}]$. Arguing as in Construction~\ref{H1andExt}, if $P \in \operatorname{Mod}^{\operatorname{BK}}_K(\mathcal{O})$ also there is a functorial inclusion 
$$
H^1(\operatorname{Hom}(P,M)^{\mathcal{O}}) \hookrightarrow \operatorname{Ext}^1_{\mathcal{O}}(P,M)
$$ 
where $\operatorname{Ext}^1_{\mathcal{O}}$ denote the Yoneda extension group in the abelian category $\operatorname{Mod}^{\operatorname{BK}}_K(\mathcal{O})$. In particular this is a map of $\mathcal{O}$-modules. This map is surjective if $P$ is projective as an $\mathfrak{S}_{\mathcal{O}}$-module (for then every extension of $P$ by $M$ splits as an $\mathfrak{S}_{\mathcal{O}}$-module).

Suppose $P$ and $M$ are free $\mathfrak{S}_{\mathcal{O}}$-modules, so that $\overline{P} = P \otimes_{\mathcal{O}} \mathbb{F}$ and $\overline{M} = M \otimes_{\mathcal{O}} \mathbb{F}$ are objects of $\operatorname{Mod}^{\operatorname{BK}}_k(\mathcal{O})$. There is a map 
\begin{equation}\label{modp}
\operatorname{Ext}^1_{\mathcal{O}}(P,M) = H^1(\operatorname{Hom}(P,M)^{\mathcal{O}}) \rightarrow H^1(\operatorname{Hom}(\overline{P},\overline{M})^{\mathcal{O}}) = \operatorname{Ext}^1_{\mathbb{F}}(\overline{P},\overline{M})
\end{equation}
induced by $\operatorname{Hom}(P,M)^{\mathcal{O}} \rightarrow \operatorname{Hom}(\overline{P},\overline{M})^{\mathcal{O}}$. On the level of exact sequences this map is given by tensoring with $\mathbb{F}$ over $\mathcal{O}$. Via $\operatorname{Hom}(P,M)^{\mathcal{O}} \rightarrow \operatorname{Hom}(\overline{P},\overline{M})^{\mathcal{O}}$ we can identify $\operatorname{Hom}(P,M)^{\mathcal{O}} \otimes_{\mathcal{O}} \mathbb{F} = \operatorname{Hom}(\overline{P},\overline{M})^{\mathcal{O}}$. Therefore, since
$$
\operatorname{Hom}(P,M)^{\mathcal{O}} \otimes_{\mathcal{O}} \mathbb{F} \rightarrow \operatorname{Hom}(P,M)^{\mathcal{O}}[\tfrac{1}{E}] \otimes_{\mathcal{O}} \mathbb{F} \rightarrow H^1(\operatorname{Hom}(P,M)^{\mathcal{O}}) \otimes_{\mathcal{O}} \mathbb{F} \rightarrow 0
$$
is exact, it follows that via \eqref{modp} we can identify $\operatorname{Ext}^1_{\mathcal{O}}(P,M) \otimes_{\mathcal{O}} \mathbb{F} = \operatorname{Ext}^1_{\mathbb{F}}(\overline{P},\overline{M})$.

Analogously, for $M \in \operatorname{Mod}^{\operatorname{BK-iso}}_K(\mathcal{O})$ we define $H^1(M)$ as the cokernel of $\varphi -1 \colon M \rightarrow M[\frac{1}{E}]$. Just as in Construction~\ref{H1andExt}, if $P \in \operatorname{Mod}^{\operatorname{BK-iso}}_K(\mathcal{O})$ there are inclusions 
$$
H^1(\operatorname{Hom}(P,M)^E) \hookrightarrow \operatorname{Ext}^1_E(P,M)
$$
where $\operatorname{Ext}^1_E(P,M)$ denotes the Yoneda extension group in $\operatorname{Mod}^{\operatorname{BK-iso}}_K(\mathcal{O})$. Since $P$ is free as an $\mathfrak{S}_E$-module this inclusion is an isomorphism. Choose $P^\circ,M^\circ \in \operatorname{Mod}^{\operatorname{BK}}_K(\mathcal{O})$ such that $P^\circ \otimes_{\mathcal{O}} E = P,M^\circ \otimes_{\mathcal{O}} E = M$. Suppose $P^\circ$ and $M^\circ$ are free over $\mathfrak{S}_{\mathcal{O}}$, then the inclusion $\operatorname{Hom}(P,M)^{\mathcal{O}} \hookrightarrow \operatorname{Hom}(P,M)^E$ induces maps
\begin{equation}\label{beta}
\operatorname{Ext}^1_{\mathcal{O}}(P^\circ,M^\circ) = H^1(\operatorname{Hom}(P^\circ,M^\circ)^{\mathcal{O}}) \rightarrow H^1(\operatorname{Hom}(P,M)^E) = \operatorname{Ext}^1_E(P,M)
\end{equation}
On the level of exact sequences this map is given by applying $\otimes_{\mathcal{O}} E$. Similarly to above \eqref{beta} induces identifications $\operatorname{Ext}^1_{\mathcal{O}}(P^\circ,M^\circ) \otimes_{\mathcal{O}} E = \operatorname{Ext}^1_E(P,M)$.

\subsection{Lifting extensions} Our aim is to prove the following.

\begin{proposition}\label{liftingextThm}
	Suppose $K = K_0$ and if $p =2$ suppose further that $\pi$ is chosen so that $K_\infty \cap K(\mu_{p^\infty}) = K$. Let $T_2,T_1$ be crystalline $\mathcal{O}$-lattices with Hodge--Tate weights contained in $[0,p]$. Set $\overline{T}_i = T_i \otimes_{\mathcal{O}} \mathbb{F}$. Assume $\overline{T}_1$ is cyclotomic-free, that $\overline{T}_2$ is irreducible, and that $\overline{T}_2 \otimes_{\mathbb{F}} \mathbb{F}(1)$ is not a Jordan--Holder factor of $\overline{T}_1$.
	
	Set $\overline{M}_i = M(T_i) \otimes_{\mathcal{O}} \mathbb{F}$ and suppose 
	\begin{equation}\label{tobelifted}
	0 \rightarrow \overline{M}_1\rightarrow \overline{M} \rightarrow \overline{M}_2 \rightarrow 0
	\end{equation}
	is an exact sequence in $\operatorname{Mod}^{\operatorname{SD}}_k(\mathcal{O})$. Then there exists a crystalline extension $0 \rightarrow T_1 \rightarrow T \rightarrow T_2 \rightarrow 0$ such that $0 \rightarrow M(T_1) \rightarrow M(T) \rightarrow M(T_2) \rightarrow 0$ is exact and recovers \eqref{tobelifted} after applying $\otimes_{\mathcal{O}} \mathbb{F}$.
\end{proposition}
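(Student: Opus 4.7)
The plan is to build the lift in three stages. First, use the identification $\operatorname{Ext}^1_{\mathcal{O}}(M(T_2),M(T_1))\otimes_{\mathcal{O}}\mathbb{F}=\operatorname{Ext}^1_{\mathbb{F}}(\overline{M}_2,\overline{M}_1)$ from Subsection~\ref{cohom} to lift the class of (\ref{tobelifted}) to an extension $0\rightarrow M(T_1)\rightarrow M\rightarrow M(T_2)\rightarrow 0$ in $\operatorname{Mod}^{\operatorname{BK}}_K(\mathcal{O})$ whose reduction modulo $\pi$ recovers (\ref{tobelifted}). Because $M(T_2)$ is free over $\mathfrak{S}_{\mathcal{O}}$ the sequence splits as $\mathfrak{S}_{\mathcal{O}}$-modules, so $M$ is free over $\mathfrak{S}_{\mathcal{O}}$ as well.

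Second, invert $p$ to obtain an extension $0\rightarrow M(V_1)\rightarrow M[\tfrac{1}{p}]\rightarrow M(V_2)\rightarrow 0$ in $\operatorname{Mod}^{\operatorname{BK-iso}}_K(\mathcal{O})$, where $V_i=T_i\otimes_{\mathcal{O}}E$. Each $M(V_i)$ has $\operatorname{coker}\varphi$ killed by $E^p$ because its Hodge--Tate weights lie in $[0,p]$; hence $M[\tfrac{1}{p}]$ is of finite $E$-height as well. Kisin's equivalence between crystalline $E$-representations and finite-$E$-height objects of $\operatorname{Mod}^{\operatorname{BK-iso}}_K(\mathcal{O})$, together with the exactness of $V\mapsto M(V)$ recorded in Corollary~\ref{isofunctor}, then identifies $M[\tfrac{1}{p}]$ with $M(V)$ for some crystalline extension $0\rightarrow V_1\rightarrow V\rightarrow V_2\rightarrow 0$.

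Third, apply the exact functor $T$ of Lemma~\ref{BKF} to $M$; this yields a $G_{K_\infty}$-equivariant short exact sequence $0\rightarrow T_1\rightarrow T(M)\rightarrow T_2\rightarrow 0$ of $\mathcal{O}$-lattices inside $V|_{G_{K_\infty}}$. The cyclotomic-freeness hypotheses on $\overline{T}_1$ and $\overline{T}_2$ are exactly those of Corollary~\ref{stable} (with $V=V_2$ and $W=V_1$), which forces $T(M)$ to be $G_K$-stable inside $V$. Setting $T:=T(M)$ yields a crystalline $\mathcal{O}$-lattice fitting in $0\rightarrow T_1\rightarrow T\rightarrow T_2\rightarrow 0$ as $G_K$-representations, and the full faithfulness clause of Theorem~\ref{kisin} together with the identity $T(M(T))=T=T(M)$ upgrades this to $M(T)\cong M$, so reducing modulo $\pi$ recovers (\ref{tobelifted}).

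The main obstacle is the essential-surjectivity step: Theorem~\ref{kisin} as stated supplies only full faithfulness, so the passage from $M[\tfrac{1}{p}]\in\operatorname{Mod}^{\operatorname{BK-iso}}_K(\mathcal{O})$ to a genuine crystalline representation requires the stronger form of Kisin's theorem identifying the essential image with the finite-$E$-height objects, plus the observation that the finite-$E$-height condition is preserved under extensions. All use of cyclotomic-freeness is funnelled through Corollary~\ref{stable}, which is what promotes the $G_{K_\infty}$-stable lattice $T(M)$ to a $G_K$-stable one.
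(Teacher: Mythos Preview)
The gap you flag in the second stage is fatal, not just a missing citation: the ``stronger form of Kisin's theorem'' you appeal to does not exist. The functor $V\mapsto M(V)$ from crystalline $E$-representations to $\operatorname{Mod}^{\operatorname{BK-iso}}_K(\mathcal{O})$ is fully faithful (Corollary~\ref{isofunctor}), but its essential image is \emph{not} all finite-$E$-height objects. In Kisin's work the essential image is cut out by a further differential condition (the operator $N_\nabla$), and there is no reason an arbitrary extension $M[\tfrac{1}{p}]$ of $M(V_2)$ by $M(V_1)$ should satisfy it. Concretely, the map $\gamma\colon \operatorname{Ext}^1_{\operatorname{crys}}(V_2,V_1)\hookrightarrow \operatorname{Ext}^1_E(M(V_2),M(V_1))$ is injective but in general not surjective, so the lift $M$ you produce in stage one need not lie in its image.

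The paper's proof runs in the opposite direction and replaces the missing essential surjectivity by a dimension count. One starts from the crystalline side: let $\Theta'\subset\operatorname{Ext}^1_E(M(V_2),M(V_1))$ be the image of $\gamma$ and $\Theta\subset\operatorname{Ext}^1_{\mathcal{O}}(M(T_2),M(T_1))$ its preimage under $\otimes_{\mathcal O}E$. Corollary~\ref{stable} is used exactly as you describe, but its role is to show that every class in $\Theta$ comes from a $G_K$-stable crystalline lattice $T$; Theorem~\ref{GLS} then forces the reduction to land in $\operatorname{Ext}^1_{\operatorname{SD}}(\overline{M}_2,\overline{M}_1)$, giving an inclusion $\Theta\otimes_{\mathcal{O}}\mathbb{F}\hookrightarrow\operatorname{Ext}^1_{\operatorname{SD}}(\overline{M}_2,\overline{M}_1)$. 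Equality---and hence the desired surjection---is then established by comparing dimensions: $\dim_E\operatorname{Ext}^1_{\operatorname{crys}}(V_2,V_1)$ is computed via Nekov\'a\v{r}'s formula for $H^1_f$, $\dim_{\mathbb{F}}\operatorname{Ext}^1_{\operatorname{SD}}(\overline{M}_2,\overline{M}_1)$ is computed in Proposition~\ref{DIMFORM}, and these agree because $\operatorname{HT}_\tau(V_i)=\operatorname{Weight}_\tau(\overline{M}_i)$ by Theorem~\ref{GLS} (a short auxiliary lemma accounts for the torsion in $\operatorname{Ext}^1_{\mathcal{O}}$). This is precisely why the extension computations of Section~\ref{section4} are needed; your route, if it worked, would render them superfluous.
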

\begin{proof}
	Let $V_i = T_i \otimes_{\mathcal{O}} E$ and let $\operatorname{Ext}^1_{\operatorname{crys}}(V_2,V_1) \subset \operatorname{Ext}^1(V_2,V_1)$ denote the subset whose elements are represented by exact sequences $0 \rightarrow V_1 \rightarrow V \rightarrow V_2 \rightarrow 0$ with $V$ crystalline. Under the usual identification $\operatorname{Ext}^1(V_2,V_1) = H^1(G_K,\operatorname{Hom}(V_2,V_1))$, the subspace $\operatorname{Ext}^1_{\operatorname{crys}}(V_2,V_1)$ identifies with 
	$$
	H^1_f(G_K,\operatorname{Hom}(V,W)) = \operatorname{ker}\Big( H^1(G_K,\operatorname{Hom}(V_2,V_1)) \rightarrow H^1(G_K,\operatorname{Hom}(V_2,V_1) \otimes_{\mathbb{Q}_p} B_{\operatorname{crys}}) \Big)
	$$
	Since the Hodge--Tate weights of $\operatorname{Hom}(V_2,V_1)$ are equal to $i  -j$ where $i$ is a weight of $V_1$ and $j$ a weight of $V_2$, \cite[Proposition 1.24]{Nek93} implies $H^1_f(G_K,\operatorname{Hom}(V_2,V_1))$ has $E$-dimension
	$$
	\sum_{\tau \in \operatorname{Hom}_{\mathbb{F}_p}(k,\mathbb{F})} \operatorname{Card}(\lbrace i -j < 0 \mid i \in \operatorname{HT}_\tau(V_1), j \in \operatorname{HT}_\tau(V_2) \rbrace) + \operatorname{dim}_E \operatorname{Hom}_{E[G_K]}(V_2,V_1)
	$$
	
	Now consider the diagram
	$$
	\begin{tikzcd}[column sep =small, row sep=small]
	~& \operatorname{Ext}^1_{\mathcal{O}}(M(T_2),M(T_1)) \arrow{r}{\beta} \arrow{d}{\alpha} & \operatorname{Ext}^1_{\mathbb{F}}(\overline{M}_2,\overline{M}_1) \\
	\operatorname{Ext}^1_{\operatorname{crys}}(V_2,V_1) \arrow{r}{\gamma} & \operatorname{Ext}^1_E(M(V_2),M(V_1)) & ~ 
	\end{tikzcd}
	$$
	where the maps $\alpha$ and $\beta$ are those described in  \eqref{beta} and \eqref{modp} respectively. The map $\gamma$ is obtained by applying $V \mapsto M(V)$ (from Corollary~\ref{isofunctor}) to exact sequences representing classes in $\operatorname{Ext}_{\operatorname{crys}}(V_2,V_1)$. This makes sense since $V \mapsto M(V)$ is exact. Exactness of $M \mapsto M(V)$ also implies that this functor preserves pushouts and pullbacks; thus $\gamma$ is $E$-linear. Since $V \mapsto M(V)$ is fully faithful we see that $\gamma$ is injective.
	
	Let $\Theta'$ denote the image of $\gamma$ and let $\Theta$ denote the preimage of $\Theta'$ under $\alpha$. If $0 \rightarrow M(T_1) \rightarrow M^\circ \rightarrow M(T_2) \rightarrow 0$ represents a class in $\Theta$ then by definition $M^\circ \otimes_{\mathcal{O}} E = M(V)$ where $V$ is a crystalline $E$-representation fitting into an extension $0 \rightarrow V_1 \rightarrow V \rightarrow V_2 \rightarrow 0$. Thus $T =T(M^\circ)$ is a $G_{K_\infty}$-stable $\mathcal{O}$-lattice inside $V$ which, since $M \mapsto T(M)$ is exact, sits in a $G_{K_\infty}$-equivariant exact sequence $0 \rightarrow T_1 \rightarrow T \rightarrow T_2 \rightarrow 0$. Our assumption on $\overline{T}_2$ and $\overline{T}_1$ allows us to apply Corollary~\ref{stable}; thus $T$ is a $G_{K}$-stable lattice in $V$ and so $M^\circ = M(T)$. Theorem~\ref{GLS} therefore implies $\beta$ maps every element of $\Theta$ into $\operatorname{Ext}^1_{\operatorname{SD}}(\overline{M}_2,\overline{M}_1)$. 
	
	The map $\beta$ can be identified with $\operatorname{Ext}^1_{\mathcal{O}}(M(T_2),M(T_1)) \rightarrow \operatorname{Ext}^1_{\mathcal{O}}(M(T_2),M(T_1) \otimes_{\mathcal{O}} \mathbb{F}$. Since $\alpha x \in \Theta$ for any $\alpha \in \mathcal{O}$ implies $x \in \Theta$, the cokernel of $\Theta \subset \operatorname{Ext}^1_{\mathcal{O}}(M(T_2),M(T_1))$ is free over $\mathcal{O}$ and so $\Theta \hookrightarrow \operatorname{Ext}^1_{\mathcal{O}}(M(T_2),M(T_1))$ induces an inclusion $\Theta \otimes_{\mathcal{O}} \mathbb{F} \hookrightarrow \operatorname{Ext}^1_{\mathcal{O}}(M(T_2),M(T_1)) \otimes_{\mathcal{O}} \mathbb{F}$. As $\beta$ maps every element of $\Theta$ into $\operatorname{Ext}^1_{\operatorname{SD}}(\overline{M_2},\overline{M}_1)$, we have
	$$
	\Theta \otimes_{\mathcal{O}} \mathbb{F} \hookrightarrow \operatorname{Ext}^1_{\operatorname{SD}}(\overline{M}_2,\overline{M}_1)
	$$
	On the other hand, since $\alpha$ is given by inverting $p$, the image of $\alpha$ is an $\mathcal{O}$-lattice inside $\operatorname{Ext}^1_E(M(V_2),M(V_1))$ and its kernel is the torsion subgroup $\operatorname{Ext}^1_{\mathcal{O}}(M(T_2),M(T_1))_{\operatorname{tors}}$. Thus we can decompose $\Theta$ as
	$$
	\Theta_{\operatorname{free}} \bigoplus \operatorname{Ext}_{\mathcal{O}}^1(M(T_2),M(T_1))_{\operatorname{tors}}
	$$ 
	where $\Theta_{\operatorname{free}}$ is a free $\mathcal{O}$-module of rank equal to the $E$-dimension of $\operatorname{Ext}^1_{\operatorname{crys}}(V_2,V_1)$. Using Lemma~\ref{below} below and formula for the dimension of $\operatorname{Ext}^1_{\operatorname{crys}}(V_2,V_1)= H^1_f(G_K,\operatorname{Hom}(V_2,V_1))$ above, we deduce
	$$
	\begin{aligned}
	\operatorname{dim}_{\mathbb{F}}(\Theta \otimes_{\mathcal{O}}\mathbb{F}) - &\operatorname{dim}_{\mathbb{F}} \operatorname{Hom}_{\operatorname{BK}}(\overline{M}_2,\overline{M}_1) = \\
	&\sum_\tau \operatorname{Card}(\lbrace i - j < 0 \mid i \in \operatorname{HT}_{\tau}(V_1),j \in \operatorname{HT}_\tau(V_2) \rbrace) 
	\end{aligned}
	$$
	By Theorem~\ref{GLS} we have $\operatorname{HT}_\tau(V_i) = \operatorname{Weight}_\tau(\overline{M}_i)$. Therefore, by Proposition~\ref{DIMFORM}, each of $\operatorname{Ext}^1_{\operatorname{SD}}(\overline{M}_2,\overline{M}_1)$ and $\Theta \otimes_{\mathcal{O}} \mathbb{F}$ have the same $\mathbb{F}$-dimension. Hence 
	$$
	\Theta \otimes_{\mathcal{O}} \mathbb{F} = \operatorname{Ext}^1_{\operatorname{SD}}(\overline{M}_2,\overline{M}_1)
	$$
	which shows that any extension $0 \rightarrow \overline{M}_1 \rightarrow M \rightarrow \overline{M}_2 \rightarrow 0$ which represents a class in $\operatorname{Ext}^1_{\operatorname{SD}}(\overline{M}_1,\overline{M}_2)$ arises as the reduction of $0 \rightarrow M(T_1) \rightarrow M(T) \rightarrow M(T_2) \rightarrow 0$ for some crystalline extension $0 \rightarrow T_1 \rightarrow T \rightarrow T_2 \rightarrow 0$.
\end{proof}
\begin{lemma}\label{below}
	If $T_1$ and $T_2$ are crystalline $\mathcal{O}$-lattices then
	\begin{gather*}
	\operatorname{dim}_{\mathbb{F}}( \operatorname{Ext}^1_{\mathcal{O}}(M(T_2),M(T_1))_{\operatorname{tors}} \otimes_{\mathcal{O}} \mathbb{F}) = \operatorname{dim}_{\mathbb{F}} \operatorname{Hom}_{\operatorname{BK}}(\overline{M}_2,\overline{M}_1)\\
	- \operatorname{dim}_{E} \operatorname{Hom}_{E[G_K]}(V_2,V_1)
	\end{gather*}
\end{lemma}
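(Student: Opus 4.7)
The plan is to apply the snake lemma to a multiplication-by-$\pi$ short exact sequence of two-term complexes, where $\pi$ is a uniformiser of $\mathcal{O}$. Write $H = \operatorname{Hom}(M(T_2), M(T_1))^{\mathcal{O}}$ and $\overline{H} = H \otimes_{\mathcal{O}} \mathbb{F}$. By Corollary~\ref{freecoeff} each $M(T_i)$ is free over $\mathfrak{S}_{\mathcal{O}}$, so $H$ is $\mathfrak{S}_{\mathcal{O}}$-free and in particular $\mathcal{O}$-flat. Thus multiplication by $\pi$ is injective on both $H$ and $H[\tfrac{1}{E}]$, and, using that $E \equiv u^e \pmod{p}$, one obtains a short exact sequence of complexes
\begin{equation*}
0 \to \bigl( H \xrightarrow{\varphi-1} H[\tfrac{1}{E}] \bigr) \xrightarrow{\pi} \bigl( H \xrightarrow{\varphi-1} H[\tfrac{1}{E}] \bigr) \to \bigl( \overline{H} \xrightarrow{\varphi-1} \overline{H}[\tfrac{1}{u}] \bigr) \to 0.
\end{equation*}

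Taking cohomology then produces the long exact sequence
\begin{equation*}
0 \to H^0(C) \xrightarrow{\pi} H^0(C) \to H^0(\overline{C}) \to H^1(C) \xrightarrow{\pi} H^1(C) \to H^1(\overline{C}) \to 0,
\end{equation*}
where $C$ denotes the first complex and $\overline{C}$ the last. The next step is to identify each term. The full faithfulness of $T \mapsto M(T)$ in Theorem~\ref{kisin} gives $H^0(C) = \operatorname{Hom}_{\operatorname{BK}}(M(T_2), M(T_1)) = \operatorname{Hom}_{\mathcal{O}[G_K]}(T_2, T_1)$, which is an $\mathcal{O}$-lattice inside $\operatorname{Hom}_{E[G_K]}(V_2, V_1)$, hence $\mathcal{O}$-free of rank $\dim_E \operatorname{Hom}_{E[G_K]}(V_2, V_1)$. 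Similarly $H^0(\overline{C}) = \operatorname{Hom}_{\operatorname{BK}}(\overline{M}_2, \overline{M}_1)$, while $H^1(C) = \operatorname{Ext}^1_{\mathcal{O}}(M(T_2), M(T_1))$ and $H^1(\overline{C}) = \operatorname{Ext}^1_{\mathbb{F}}(\overline{M}_2, \overline{M}_1)$ by the discussion at the start of this section, using that each $M(T_i)$ is $\mathfrak{S}_{\mathcal{O}}$-projective.

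Extracting the short exact sequence $0 \to H^0(C)/\pi \to H^0(\overline{C}) \to H^1(C)[\pi] \to 0$ from the long exact sequence yields
\begin{equation*}
\dim_{\mathbb{F}} H^1(C)[\pi] = \dim_{\mathbb{F}} \operatorname{Hom}_{\operatorname{BK}}(\overline{M}_2, \overline{M}_1) - \dim_E \operatorname{Hom}_{E[G_K]}(V_2, V_1),
\end{equation*}
which already matches the right-hand side of the lemma. To close the argument it remains to pass from $H^1(C)[\pi]$ to $H^1(C)_{\operatorname{tors}} \otimes_{\mathcal{O}} \mathbb{F}$. The proof of Proposition~\ref{liftingextThm} exhibits $\operatorname{Ext}^1_{\mathcal{O}}(M(T_2), M(T_1))$ as an extension of the $\mathcal{O}$-lattice $\operatorname{image}(\alpha) \subset \operatorname{Ext}^1_E(M(V_2), M(V_1))$ by its torsion subgroup, so it is finitely generated over the DVR $\mathcal{O}$; its torsion part is then a finite direct sum of cyclic modules $\mathcal{O}/\pi^{k_i}$, for which $\dim_{\mathbb{F}} N[\pi] = \dim_{\mathbb{F}}(N \otimes_{\mathcal{O}} \mathbb{F})$, and clearly $H^1(C)[\pi] = H^1(C)_{\operatorname{tors}}[\pi]$. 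The only real subtlety is this finite-generation observation, which as just noted is already available from the surrounding framework; apart from it, the argument is a routine unwinding of the long exact sequence.
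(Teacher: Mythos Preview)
Your argument is correct and is essentially the same as the paper's: both compute the $\varpi$-torsion in $\operatorname{Ext}^1_{\mathcal{O}}(M(T_2),M(T_1))$ via the long exact sequence induced by multiplication by a uniformiser, and both identify the relevant $H^0$ terms using full faithfulness of $T\mapsto M(T)$. The only cosmetic difference is that you phrase this as the snake lemma applied to the two-term complex $H\xrightarrow{\varphi-1}H[\tfrac1E]$, whereas the paper applies $\operatorname{Hom}_{\operatorname{BK}}(M(T_2),-)$ to $0\to M(T_1)\xrightarrow{\varpi}M(T_1)\to\overline{M}_1\to 0$; these are the same long exact sequence. One small remark: your appeal to the proof of Proposition~\ref{liftingextThm} for finite generation of $\operatorname{Ext}^1_{\mathcal{O}}$ is mildly circular (that proposition invokes the present lemma), but the paper's own proof makes the identical identification $\dim_{\mathbb{F}}(N_{\operatorname{tors}}\otimes\mathbb{F})=\dim_{\mathbb{F}}N[\varpi]$ without further comment, so you are not assuming more than the paper does.
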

\begin{proof}
	The $\mathbb{F}$-dimension of $\operatorname{Ext}_{\mathcal{O}}^1(M(T_2),M(T_1))_{\operatorname{tors}} \otimes_{\mathcal{O}} \mathbb{F}$ equals the $\mathbb{F}$-dimension of the $\varpi$-torsion subgroup of $\operatorname{Ext}^1_{\mathcal{O}}(M(T_2),M(T_1))$. To compute this latter group consider the exact sequence $0 \rightarrow M(T_1) \xrightarrow{\varpi} M(T_1) \rightarrow \overline{M}_1 \rightarrow 0$ in $\operatorname{Mod}^{\operatorname{BK}}_K(\mathcal{O})$; the associated long exact sequence reads
	$$
	\begin{aligned}
	0 \rightarrow \operatorname{Hom}_{\operatorname{BK}}&(M(T_2),M(T_1)) \xrightarrow{\varpi} \operatorname{Hom}_{\operatorname{BK}}(M(T_2),M(T_1)) \rightarrow \operatorname{Hom}_{\operatorname{BK}}(M(T_2),\overline{M}_1) \\
	&\rightarrow \operatorname{Ext}^1_{\mathcal{O}}(M(T_2),M(T_1)) \xrightarrow{\varpi} \operatorname{Ext}^1_{\mathcal{O}}(M(T_2),M(T_1))
	\end{aligned}
	$$
	Identifying $\operatorname{Hom}_{\operatorname{BK}}(M(T_2),\overline{M}_1) = \operatorname{Hom}_{\operatorname{BK}}(\overline{M}_2,\overline{M}_1)$ we see that the $\mathbb{F}$-dimension of the $\varpi$-torsion subgroup of $\operatorname{Ext}^1_{\mathcal{O}}(M(T_2),M(T_1))$ equals
	$$
	\operatorname{dim}_{\mathbb{F}} \operatorname{Hom}_{\operatorname{BK}}(\overline{M}_2,\overline{M}_1) - \operatorname{dim}_{\mathbb{F}} \operatorname{Hom}_{\operatorname{BK}}(M(T_2),M(T_1))/\varpi
	$$
	By full faithfulness of $T \mapsto M(T)$ we have that $\operatorname{Hom}_{\operatorname{BK}}(M(T_2),M(T_1)) = \operatorname{Hom}_{\mathcal{O}[G_{K}]}(T_2,T_1)$. Since $\operatorname{Hom}_{\mathcal{O}[G_K]}(T_2,T_1)$ is $\mathcal{O}$-free and equals $\operatorname{Hom}_{E[G_K]}(V_2,V_1)$ after inverting $p$ the lemma follows.
\end{proof}
\section{Lifting irreducibles}\label{section6}

In this section we study simple objects of $\operatorname{Mod}^{\operatorname{SD}}_k(\mathcal{O})$ in low dimensions and show they arise from crystalline representations.

\subsection{Rank ones}
Recall from Construction~\ref{O-semilinear} how $\mathfrak{S} \otimes_{\mathbb{Z}_p} \mathcal{O}$ is made into an $\mathcal{O}[[u]]$-algebra. In this way we view $k[[u]] \otimes_{\mathbb{F}_p} \mathbb{F}$ as an $\mathbb{F}[[u]]$-algebra. Also let $e_\tau \in k[[u]] \otimes_{\mathbb{F}_p} \mathbb{F}$ denote the image of the idempotent $\widetilde{e}_\tau \in \mathfrak{S} \otimes_{\mathbb{Z}_p} \mathcal{O}$ defined in Construction~\ref{O-semilinear}.

\begin{lemma}
After possibly enlarging $\mathbb{F}$, every $M \in \operatorname{Mod}^{\operatorname{BK}}_k(\mathcal{O})$ of rank one over $k[[u]] \otimes_{\mathbb{F}_p} \mathbb{F}$ is isomorphic to a Breuil--Kisin module
\begin{equation}\label{rk1}
N = k[[u]] \otimes_{\mathbb{F}_p} \mathbb{F}, \qquad \varphi_N(1) = x\sum_{\tau \in \operatorname{Hom}_{\mathbb{F}_p}(k,\mathbb{F})} u^{r_{\theta}}e_{\theta} 
\end{equation}
for some $r_{\theta} \in \mathbb{Z}$ and some $x \in \mathbb{F}^\times$. 
\end{lemma}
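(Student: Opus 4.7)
The plan is to use the product decomposition $k[[u]] \otimes_{\mathbb{F}_p} \mathbb{F} \cong \prod_{\tau} \mathbb{F}[[u]]$ from Construction~\ref{O-semilinear} to reduce to a one-variable problem, and then to normalise the Frobenius on each factor by a successive change of basis. The hard part will be aligning the scalars on all factors to a single $x \in \mathbb{F}^\times$; this is where the hypothesis that $\mathbb{F}$ may be enlarged enters.

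First I would write $M = \prod_\tau M_\tau$ with $M_\tau = e_\tau M$ a rank one free $\mathbb{F}[[u]]$-module, and pick a basis vector $f_\tau$ of each $M_\tau$. By the analogue of \eqref{Olalal}, the Frobenius restricts to $\mathbb{F}[[u]]$-linear isomorphisms $M_{\tau \circ \varphi} \otimes_{\varphi, \mathbb{F}[[u]]} \mathbb{F}[[u]] \rightarrow M_\tau[\tfrac{1}{u}]$, so
\begin{equation*}
\varphi(f_{\tau \circ \varphi}) = u^{r_\tau} c_\tau f_\tau
\end{equation*}
for a uniquely determined integer $r_\tau$ and a unit $c_\tau \in \mathbb{F}[[u]]^\times$. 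These $r_\tau$ are exactly the exponents appearing in the statement, and they are forced by $M$.

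Next I would consider the change-of-basis $f_\tau \mapsto \lambda_\tau f_\tau$ for units $\lambda_\tau \in \mathbb{F}[[u]]^\times$. A direct computation gives the transformation rule $c_\tau \mapsto \lambda_{\tau \circ \varphi}(u^p) \lambda_\tau^{-1} c_\tau$. Thus reducing to the form \eqref{rk1} amounts to solving the system
\begin{equation*}
\lambda_\tau = x^{-1} c_\tau \lambda_{\tau \circ \varphi}(u^p), \qquad \tau \in \operatorname{Hom}_{\mathbb{F}_p}(k,\mathbb{F})
\end{equation*}
for $\lambda_\tau \in \mathbb{F}[[u]]^\times$ and a single constant $x \in \mathbb{F}^\times$.

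Let $f = [k:\mathbb{F}_p]$, fix an ordering $\tau_0, \tau_1, \ldots, \tau_{f-1}$ with $\tau_{i+1} = \tau_i \circ \varphi$, and iterate the recursion around the cycle. I would then show that the system is consistent if and only if $\lambda_{\tau_0}$ satisfies a single equation of the form
\begin{equation*}
\lambda_{\tau_0} = x^{-f} C \cdot \lambda_{\tau_0}(u^{p^f})
\end{equation*}
where $C = \prod_{j=0}^{f-1} c_{\tau_j}(u^{p^j}) \in \mathbb{F}[[u]]^\times$ depends only on the data. Evaluating at $u = 0$ forces $x^f = C(0)$, which can be solved after replacing $\mathbb{F}$ by a finite extension containing an $f$-th root of $C(0)$. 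Having fixed such an $x$, I would inductively determine the coefficients of $\lambda_{\tau_0}$: the coefficient of $u^n$ in $C \cdot \lambda_{\tau_0}(u^{p^f})$ only involves coefficients of $\lambda_{\tau_0}$ in degrees $\leq n/p^f$, so for $n \geq 1$ the recursion is strictly contracting and uniquely determines $\lambda_{\tau_0}$ as a power series, with constant term the chosen $f$-th root. The remaining $\lambda_{\tau_i}$ are then defined by $\lambda_{\tau_i} = x^{-1} c_{\tau_i} \lambda_{\tau_{i+1}}(u^p)$ for $i < f-1$, and the consistency equation we have just solved ensures the relation closes up at $i = f-1$. In the new basis $\lambda_\tau f_\tau$ the Frobenius takes the required form, proving the lemma.
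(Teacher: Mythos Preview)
Your argument is correct and follows essentially the same route as the paper: the author first isolates the abstract one-variable equation $\varphi^{n}(z)=x^{n}fz$ over $\mathbb{F}[[u]]$ (your consistency equation $\lambda_{\tau_0}=x^{-f}C\,\lambda_{\tau_0}(u^{p^f})$ is exactly this with $z=\lambda_{\tau_0}$ and $f=C^{-1}$), observes it is solvable after adjoining an $n$-th root of the constant term, and then writes down the remaining basis vectors explicitly.

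One small slip in your last paragraph: the recursion $\lambda_{\tau_i}=x^{-1}c_{\tau_i}\,\lambda_{\tau_{i+1}}(u^p)$ expresses $\lambda_{\tau_i}$ in terms of $\lambda_{\tau_{i+1}}$, so starting from the known $\lambda_{\tau_0}=\lambda_{\tau_f}$ you should run it through $i=f-1,f-2,\ldots,1$ and then verify closure at $i=0$; as written (``for $i<f-1$'' with closure at $i=f-1$'') you would be trying to recover $\lambda_{\tau_{i+1}}$ from $\lambda_{\tau_i}$, which requires inverting $u\mapsto u^p$ and is not generally possible in $\mathbb{F}[[u]]$. With this direction corrected the argument goes through.
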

\begin{proof}
	For any $f \in 1+ u\mathbb{F}[[u]]$ and $n \geq 0$ it is easy to check th equation $\varphi^n(z) = fz$ has a solution in $\mathbb{F}[[u]]^\times$ (here $\varphi$ on $\mathbb{F}[[u]]$ denotes the Frobenius $\sum a_iu^i \mapsto \sum a_iu^{ip}$). Thus, after possibly enlarging $\mathbb{F}$, if $f \in \mathbb{F}[[u]]^\times$ there exists $z \in \mathbb{F}[[u]]^\times$ and $x \in \mathbb{F}$ such that $\varphi^n(z) = x^n f z$.

	Now consider the statement of the lemma. For each $\tau$ choose a generator $x_{\tau}$ of $M_\tau$ over $\mathbb{F}[[u]]$. There are integers $r_\tau$ and $f_\tau \in \mathbb{F}[[u]]^\times$ such that $\varphi(x_{\tau \circ \varphi}) = u^{r_\tau} f_\tau e_\tau$. Recall that $\varphi_M \colon M_{\tau \circ \varphi}\rightarrow M_\tau[\tfrac{1}{u}]$ is semi-linear for the endomorphism $\varphi$ of $\mathbb{F}[[u]]$ described in the previous paragraph. Choose $z \in \mathbb{F}[[u]]^\times$ and $x \in \mathbb{F}^\times$ so that
	\begin{equation}\label{iterate}
	\frac{\varphi^{[k:\mathbb{F}_p]}(z)}{z} = x^{[k:\mathbb{F}_p]}\Bigg(  f_{\tau} \varphi(f_{\tau \circ \varphi^{-1}}) \ldots \varphi^{[k:\mathbb{F}_p]-1}(f_{\tau \circ \varphi^{-[k:\mathbb{F}_p]-1}}) \Bigg)^{-1}
	\end{equation}
	Set $y_{-1} = zx_{\tau \circ \varphi^{-1}}$ and for $i>1$ set 
	$$
	y_{-i} = \frac{\varphi^{i-1}(z)}{x^{i-1}} f_{\tau \circ \varphi^{-i}} \varphi(f_{\tau \circ \varphi^{-i+1}}) \ldots \varphi^{i-2}(f_{\tau \circ \varphi^{-2}})x_{\tau \circ \varphi^{-i}}
	$$
	Then $y_{-i}$ generates $M_{\theta \circ \varphi^{-i}}$. By construction $\varphi(y_{-i+1}) = x y_{-i}$ for $2 \leq i \leq [k:\mathbb{F}_p]$, also $\varphi(y_{-[k:\mathbb{F}_p]}) = x y_{-1}$ because $x$ and $z$ satisfy \eqref{iterate}. Thus the map $M \rightarrow N$ sending $y_{-i}$ onto $e_{\tau \circ \varphi^{-i}}$ is an isomorphism of Breuil--Kisin modules.
\end{proof}
\begin{proposition}\label{crystliftrk1}
	Assume $K = K_0$ and $N \in \operatorname{Mod}^{\operatorname{BK}}_k(\mathcal{O})$ is as in \eqref{rk1}. Then there exists a rank one crystalline $\mathcal{O}$-lattice $T$ with $\tau$-Hodge--Tate weight $r_\tau$ and such that $M(T) \otimes_{\mathcal{O}} \mathbb{F} \cong N$.
\end{proposition}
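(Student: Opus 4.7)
The plan is to write down the candidate crystalline $\mathcal{O}$-lattice $T$ explicitly as a character of $G_K$, then identify its Breuil--Kisin module and check that the reduction recovers $N$. Since $K = K_0$ is unramified, crystalline characters are classified up to unramified twist by their Hodge--Tate weights, and this makes the construction essentially canonical.

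For each embedding $\tau\colon W(k) \hookrightarrow \mathcal{O}$, let $\psi_\tau\colon G_K \to \mathcal{O}^\times$ be a crystalline character whose $\tau$-Hodge--Tate weight is $1$ and whose other Hodge--Tate weights are zero. Such characters exist when $K = K_0$ (they can be built via Lubin--Tate theory applied to the uniformiser $p$, or as in \cite[Example 3.5]{Kis06}). For a lift $\widetilde{x} \in \mathcal{O}^\times$ of $x \in \mathbb{F}^\times$, let $\eta$ be the unramified character of $G_K$ with $\eta(\operatorname{Frob}) = \widetilde{x}$, and set
$$
T = \mathcal{O}(\chi), \qquad \chi = \eta \cdot \prod_\tau \psi_\tau^{r_\tau}.
$$
Then $T$ is a rank-one crystalline $\mathcal{O}$-lattice with $\tau$-Hodge--Tate weight $r_\tau$ for each $\tau$, and the weights lie in $[0,p]$ by hypothesis.

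Next I would compute $M(T)$ explicitly. By Corollary~\ref{freecoeff} it is free of rank one over $\mathfrak{S} \otimes_{\mathbb{Z}_p} \mathcal{O}$. Using the explicit description of $M(\psi_\tau)$ (a direct calculation from Lubin--Tate theory, or the reduction step in \cite[Theorem 4.22]{GLS}), the Frobenius on $M(\psi_\tau)$ acts on its $\tau$-component by $E(u)$ times a unit and on the other $\tau'$-components by units; and $M(\eta)$ is free of rank one with Frobenius equal to multiplication by $\widetilde{x}$. Combining these via functoriality of $T \mapsto M(T)$ under tensor products, and absorbing any residual unit ambiguity into the choice of $\widetilde{x}$ (which is possible because $\eta \mapsto \eta(\operatorname{Frob})$ surjects onto $\mathcal{O}^\times$, hence onto $\mathbb{F}^\times$ on reduction), produces a generator of $M(T)$ on which Frobenius acts by $\widetilde{x} \sum_\tau E(u)^{r_\tau} \widetilde{e}_\tau$. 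Since $K = K_0$ is unramified, $E(u) = u - p$, and reducing modulo the maximal ideal of $\mathcal{O}$ yields $\varphi(1) = x \sum_\tau u^{r_\tau} e_\tau$, i.e.\ precisely $N$.

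The main obstacle is making the explicit computation of $M(\psi_\tau)$ rigorous enough to pin down the units appearing in $\varphi$; this is standard material in the theory of rank-one Breuil--Kisin modules, but requires careful bookkeeping of normalisations (as in the footnote to Theorem~\ref{GLS}). Once this is in hand, Theorem~\ref{GLS} already guarantees that the weights of the reduction are correct, and the preceding paragraph identifies the scalar, completing the proof.
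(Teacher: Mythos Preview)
The paper's own proof is a one-line citation to \cite[Lemma 6.3]{GLS}; it does not give any argument. Your proposal is essentially a sketch of the standard proof that lies behind that citation: build the character as an unramified twist of a product of Lubin--Tate characters $\psi_\tau$, then identify the Breuil--Kisin module explicitly. So the approaches agree in substance, with yours supplying the details the paper outsources.

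One small inaccuracy: you write that ``the weights lie in $[0,p]$ by hypothesis,'' but the proposition only assumes $N$ is of the form \eqref{rk1}, which allows arbitrary $r_\tau \in \mathbb{Z}$; strong divisibility is not assumed here. This does not affect your argument, since the construction of $\chi$ and the computation of $M(\chi)$ work for any integers $r_\tau$ (using $E(u)^{r_\tau} \in \mathfrak{S}_{\mathcal{O}}[\tfrac{1}{E}]$ and noting $E(u) \equiv u$ modulo the maximal ideal when $K = K_0$). The genuine content, as you correctly flag, is pinning down the units in $M(\psi_\tau)$ well enough to match the scalar $x$; this is exactly the bookkeeping carried out in \cite{GLS}.
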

\begin{proof}
	This is proven in \cite[Lemma 6.3]{GLS}.
\end{proof}
\subsection{Induction and restriction}\label{indres} Let $L/K$ be the unramified extension corresponding to a finite extension $l/k$, and let $L_\infty = K_\infty L$. Set $\mathfrak{S}_L = W(l)[[u]]$. Extension of scalars along the inclusion $f\colon \mathfrak{S} \rightarrow \mathfrak{S}_L$ describes a functor
	$$
	f^*\colon \operatorname{Mod}^{\operatorname{BK}}_K \rightarrow \operatorname{Mod}^{\operatorname{BK}}_L
	$$
	For $M \in \operatorname{Mod}^{\operatorname{BK}}_K$ the module $f^*M = M \otimes_{\mathfrak{S}} \mathfrak{S}_L$ is made into a Breuil--Kisin module via the semilinear map $m \otimes s \mapsto \varphi_M(m)\otimes \varphi(s)$. Similarly restriction of scalars along $f$ induces a functor
	$$
	f_*\colon \operatorname{Mod}^{\operatorname{BK}}_L \rightarrow \operatorname{Mod}^{\operatorname{BK}}_K
	$$
	If $M \in \operatorname{Mod}^{\operatorname{BK}}_L$ we equip $f_*M$ with the obvious semilinear map $m \mapsto \varphi_M(m)$.

\begin{lemma}\label{yoneda}
	Let $N \in \operatorname{Mod}^{\operatorname{BK}}_L$ and $M \in \operatorname{Mod}^{\operatorname{BK}}_K$. Then there are functorial identifications $\operatorname{Hom}_{\operatorname{BK}}(M,f_*N) \cong \operatorname{Hom}_{\operatorname{BK}}(f^*M,N)$, $T(f^*M) \cong T(M)|_{G_{L_\infty}}$, and $T(f_*N) \cong \operatorname{Ind}^{K_\infty}_{L_\infty} T(N)$ making the following diagram commute.
	$$
	\begin{tikzcd}[column sep =small, row sep=small]
	\operatorname{Hom}_{\operatorname{BK}}(M,f_*N) \arrow{r} \arrow{d}{T} & \operatorname{Hom}_{\operatorname{BK}}(f^*M,N) \arrow{d}{T}\\
	\operatorname{Hom}_{G_{K_\infty}}(T(M),\operatorname{Ind}_{L_\infty}^{K_\infty} T(N)) \arrow{r}{(\operatorname{Frob})} & \operatorname{Hom}_{G_{L_\infty}}(T(M)|_{G_{L_\infty}},T(N))
	\end{tikzcd}
	$$
	The lower horizontal arrow is given by Frobenius reciprocity.
\end{lemma}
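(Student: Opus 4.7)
The plan is to establish the three isomorphisms separately, using the characterization of $T$ via the identification $M \otimes_{\mathfrak{S}} W(C^\flat) \cong T(M) \otimes_{\mathbb{Z}_p} W(C^\flat)$ from Lemma~\ref{BKF}, and then to chase an element around the diagram. The Hom adjunction between $f^*$ and $f_*$ is the main algebraic ingredient; everything else is bookkeeping about the various $\varphi$ and Galois actions.

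For the adjunction $\operatorname{Hom}_{\operatorname{BK}}(M,f_*N) \cong \operatorname{Hom}_{\operatorname{BK}}(f^*M,N)$, start from the usual tensor-hom adjunction for modules: a $\mathfrak{S}$-linear map $g\colon M \to f_*N = N$ corresponds to the $\mathfrak{S}_L$-linear map $\tilde g\colon M \otimes_{\mathfrak{S}} \mathfrak{S}_L \to N$, $m \otimes s \mapsto s\cdot g(m)$. One then checks that $g \circ \varphi_M = \varphi_N \circ g$ if and only if $\tilde g \circ \varphi_{f^*M} = \varphi_N \circ \tilde g$, using that $\varphi_{f^*M}(m \otimes s) = \varphi_M(m) \otimes \varphi(s)$ and that $\varphi_N$ is semilinear for $\varphi$ on $\mathfrak{S}_L$. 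This identifies the subsets of Frobenius-equivariant maps on both sides.

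For the two statements about $T$: tensoring with $W(C^\flat)$ over the appropriate ring gives
\[
  f^*M \otimes_{\mathfrak{S}_L} W(C^\flat) \;=\; M \otimes_{\mathfrak{S}} \mathfrak{S}_L \otimes_{\mathfrak{S}_L} W(C^\flat) \;=\; M \otimes_{\mathfrak{S}} W(C^\flat),
\]
and taking $\varphi$-invariants yields $T(f^*M) = T(M)$ as abelian groups; the $G_{L_\infty}$-action on $T(f^*M)$ is the restriction of the $G_{K_\infty}$-action on $T(M)$ because the embedding $\mathfrak{S}_L \hookrightarrow W(C^\flat)$ is only $G_{L_\infty}$-equivariant. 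For $T(f_*N)$ one writes
\[
  f_*N \otimes_{\mathfrak{S}} W(C^\flat) \;=\; N \otimes_{\mathfrak{S}_L} \bigl(\mathfrak{S}_L \otimes_{\mathfrak{S}} W(C^\flat)\bigr) \;=\; N \otimes_{\mathfrak{S}_L} \bigl(W(l) \otimes_{W(k)} W(C^\flat)\bigr).
\]
Because $l/k$ is separable, standard étale-algebra decomposition gives a $(\varphi, G_{K_\infty})$-equivariant isomorphism $W(l) \otimes_{W(k)} W(C^\flat) \cong \prod_{G_{K_\infty}/G_{L_\infty}} W(C^\flat)$, where $G_{K_\infty}$ acts by permuting cosets combined with its action on $W(C^\flat)$, and $\varphi$ acts diagonally. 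Substituting and taking $\varphi$-invariants gives $T(f_*N) \cong \operatorname{Ind}_{L_\infty}^{K_\infty} T(N)$ once one recognises the resulting object as the induced representation (continuous functions $G_{K_\infty} \to T(N)$ satisfying the usual $G_{L_\infty}$-equivariance, or equivalently a coproduct indexed by cosets).

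For the commutativity of the diagram, trace an element. Given $g \colon M \to f_*N$, the induced map $T(g) \colon T(M) \to \operatorname{Ind}_{L_\infty}^{K_\infty} T(N)$, composed with evaluation at $1$, yields a $G_{L_\infty}$-equivariant map $T(M)|_{G_{L_\infty}} \to T(N)$; this is the content of Frobenius reciprocity. One then observes that under the identifications above, this composite coincides with the map $T(\tilde g) \colon T(f^*M) \to T(N)$ obtained from the corresponding $\tilde g \colon f^*M \to N$, because both are induced, at the level of $W(C^\flat)$-modules, by the same underlying $\mathfrak{S}$-linear map $g$. The main technical obstacle will be unwinding the identification $T(f_*N) \cong \operatorname{Ind}_{L_\infty}^{K_\infty} T(N)$ carefully enough to see that evaluation at $1$ on the induced-representation side matches the tautological projection on the product-of-embeddings side; once these coordinates are set up consistently the diagram commutes by inspection.
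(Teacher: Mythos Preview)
Your proposal is a correct direct argument: the tensor--hom adjunction for $(f^*,f_*)$ on underlying modules restricts to Frobenius-equivariant maps, the identity $f^*M \otimes_{\mathfrak{S}_L} W(C^\flat) = M \otimes_{\mathfrak{S}} W(C^\flat)$ gives $T(f^*M) = T(M)|_{G_{L_\infty}}$, and the \'etale decomposition $W(l)\otimes_{W(k)}W(C^\flat) \cong \prod_{G_{K_\infty}/G_{L_\infty}} W(C^\flat)$ yields $T(f_*N) \cong \operatorname{Ind}_{L_\infty}^{K_\infty} T(N)$, after which the diagram commutes by unwinding the identifications.

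The paper, however, does not prove this lemma at all: it simply cites \cite[Lemma~6.2.4]{B18}. So there is no approach in the present paper to compare yours against. Your argument is essentially the standard one and is what one would expect a proof of the cited lemma to look like; the only point that deserves a little more care than you gave it is the $\mathfrak{S}_L$-module structure on $\prod_{\sigma} W(C^\flat)$, where $W(l)$ acts on the $\sigma$-th factor through the embedding $\sigma$, so that $N \otimes_{\mathfrak{S}_L} \prod_\sigma W(C^\flat)$ really decomposes as $\prod_\sigma (N \otimes_{\mathfrak{S}_L,\sigma} W(C^\flat))$ and the $\varphi$-invariants on each factor give the appropriate Galois conjugate of $T(N)$. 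Once this is made explicit the identification with the induced representation and the compatibility with evaluation at $1$ are immediate.
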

\begin{proof}
	This is \cite[Lemma 6.2.4]{B18}.
\end{proof}

By functoriallity $f_*$ and $f^*$ induce functors on $\operatorname{Mod}^{\operatorname{BK}}_K(\mathcal{O})$ and $\operatorname{Mod}^{\operatorname{BK}}_L(\mathcal{O})$. The following lemma shows how they also preserve strong divisibility.
\begin{lemma}\label{SDinductionrestriction}
	Assume $k \subset l \subset \mathbb{F}$.
	\begin{enumerate}
		\item If $M \in \operatorname{Mod}^{\operatorname{SD}}_k(\mathcal{O})$ then $f^*M \in \operatorname{Mod}^{\operatorname{SD}}_l(\mathcal{O})$ and for each $\theta \in \operatorname{Hom}_{\mathbb{F}_p}(l,\mathbb{F})$ we have 
		$$
		\operatorname{Weight}_\theta(f^*M) = \operatorname{Weight}_{\theta|_k}(M)
		$$
		\item If $N \in \operatorname{Mod}^{\operatorname{SD}}_l(\mathcal{O})$ then $f_*N \in \operatorname{Mod}^{\operatorname{SD}}_k(\mathcal{O})$ and 
		$$
		\operatorname{Weight}_\tau(f_*N) = \bigcup_{\theta|_k = \tau} \operatorname{Weight}_\theta(N)
		$$
	\end{enumerate} 
\end{lemma}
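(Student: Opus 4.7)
The plan is to reduce both parts to the explicit matrix criterion of Remark~\ref{explicit}, via a careful bookkeeping of the idempotent decompositions of Construction~\ref{O-semilinear} under $f$ and $f_*$. The first step is to work out how the inclusion $f \colon \mathfrak{S} \rightarrow \mathfrak{S}_L$ interacts with the isomorphisms $\mathfrak{S}_{\mathcal{O}} \cong \prod_{\tau \in \operatorname{Hom}_{\mathbb{F}_p}(k,\mathbb{F})} \mathcal{O}[[u]]$ and $\mathfrak{S}_L \otimes_{\mathbb{Z}_p} \mathcal{O} \cong \prod_{\theta \in \operatorname{Hom}_{\mathbb{F}_p}(l,\mathbb{F})} \mathcal{O}[[u]]$. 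Using the defining property $(a \otimes 1) \widetilde{e}_\tau = (1 \otimes \tau(a))\widetilde{e}_\tau$ for $a \in W(k)$, I can check that $f \otimes 1$ sends $\widetilde{e}_\tau$ to $\sum_{\theta\mid_k = \tau} \widetilde{e}_\theta$ (the restriction map $\theta \mapsto \theta\mid_k$ is surjective with fibres of size $[l:k]$ because $l \subset \mathbb{F}$). Reducing mod~$p$, this gives the analogous identity for the $e_\tau$. I will also note, for the Frobenius bookkeeping, that $(\theta \circ \varphi)\mid_k = \theta\mid_k \circ \varphi$.

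For part~(1), this idempotent identity shows that the $\theta$-component of $f^*M = M \otimes_{\mathfrak{S}_{\mathcal{O}}} \mathfrak{S}_{L,\mathcal{O}}$ is canonically $M_{\theta\mid_k}$ as an $\mathcal{O}[[u]]$-module, and the Frobenius $\varphi_{f^*M}$ on this component is identified with $\varphi_M \colon M_{\theta\mid_k \circ \varphi} \rightarrow M_{\theta\mid_k}$. In particular $f^*M$ is free over $\mathfrak{S}_L \otimes_{\mathbb{Z}_p} \mathcal{O}$ of the same $\mathfrak{S}$-rank as $M$. After reducing mod $p$, an $\mathbb{F}[[u]]$-basis of $M_{\theta\mid_k}$ doubles as a basis of $(f^*M)_\theta$, and the matrix $A\operatorname{diag}(u^{r_i})B$ representing $\varphi_M$ from Remark~\ref{explicit} also represents $\varphi_{f^*M}$ at $\theta$. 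Hence $\operatorname{Weight}_\theta(f^*M) = \operatorname{Weight}_{\theta\mid_k}(M)$ and the condition $B \in \operatorname{GL}_n(\mathbb{F}[[u^p]])$, $r_i \in [0,p]$ transfers directly, giving $f^*M \in \operatorname{Mod}^{\operatorname{SD}}_l(\mathcal{O})$.

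For part~(2), the same idempotent identity gives $(f_*N)_\tau = \widetilde{e}_\tau N = \bigoplus_{\theta\mid_k = \tau} N_\theta$ as $\mathcal{O}[[u]]$-modules, with freeness and $\mathfrak{S}$-rank $[l:k]$ times the $\mathfrak{S}_L$-rank of $N$. The Frobenius of $f_*N$ is the underlying Frobenius of $N$, and under the above identification it splits as the direct sum $\bigoplus_{\theta\mid_k = \tau} \varphi_N \colon N_{\theta \circ \varphi} \rightarrow N_\theta$ (indexed by the $\theta$'s whose restriction to $k$ is $\tau$, via the bijection $\theta \leftrightarrow \theta \circ \varphi$ on such subsets). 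Taking bases of each $N_\theta$ as in Remark~\ref{explicit} and assembling them block-diagonally produces a matrix for $\varphi_{f_*N}$ at $\tau$ of the form $A\operatorname{diag}(u^{r_i})B$ with $A,B$ block-diagonal, $B \in \operatorname{GL}_n(\mathbb{F}[[u^p]])$, and exponents $r_i \in [0,p]$ consisting of the concatenation $\bigcup_{\theta\mid_k = \tau} \operatorname{Weight}_\theta(N)$. Applying Remark~\ref{explicit} again gives $f_*N \in \operatorname{Mod}^{\operatorname{SD}}_k(\mathcal{O})$ and the claimed weight formula.

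The only real content is the initial idempotent calculation; everything else is formal once that is in place. So the main (minor) obstacle is making sure the restriction map $\operatorname{Hom}_{\mathbb{F}_p}(l,\mathbb{F}) \rightarrow \operatorname{Hom}_{\mathbb{F}_p}(k,\mathbb{F})$ and the $\varphi$-twist $\theta \mapsto \theta \circ \varphi$ are compatible in the way required for the Frobenius identifications — but this is immediate from the fact that $\varphi$ on $W(l)$ restricts to $\varphi$ on $W(k)$.
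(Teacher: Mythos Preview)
Your argument is correct. The paper itself does not prove this lemma but simply cites \cite[Lemma~6.2.6]{B18}; your self-contained verification via the idempotent identity $e_\tau \mapsto \sum_{\theta\mid_k = \tau} e_\theta$ together with the matrix criterion of Remark~\ref{explicit} is the natural route and almost certainly what underlies the cited result.
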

\begin{proof}
	This is \cite[Lemma 6.2.6]{B18}.
\end{proof}

\subsection{Approximation by rank ones}\label{approx} For this subsection fix an irreducible $M \in \operatorname{Mod}^{\operatorname{SD}}_k(\mathcal{O})$. Irreducible here means the only non-trivial sub-Brueil--Kisin module $M' \subset M$ in $\operatorname{Mod}^{\operatorname{BK}}_k(\mathcal{O})$ with torsion-free cokernel is $M$ itself. Let $L/K$ be the unramified extension of degree $\operatorname{dim}_{\mathbb{F}} T(M)$ with residue extension $l/k$, and let $f\colon \mathfrak{S} \rightarrow \mathfrak{S}_L$ be as in Subsection~\ref{indres}.

\begin{proposition}\label{approximate}
	After possibly enlarging $\mathbb{F}$ there exists a rank one $N \in \operatorname{Mod}^{\operatorname{SD}}_l(\mathcal{O})$ and an inclusion $M \rightarrow f_*N$ whose image contains $u(f_*N)$. 
	
	Further, if for $\theta \in \operatorname{Hom}_{\mathbb{F}_p}(l,\mathbb{F})$ we set $\delta_\theta = 0$ if the image of $M$ contains $N_\theta$, and $\delta_\theta =1$ otherwise, then
	$$
	r_{\theta} , r_{\theta} + p \delta_{\theta \circ \varphi} - \delta_\theta \in \operatorname{Weight}_{\theta|_k}(M)
	$$
	where $\lbrace r_\theta \rbrace = \operatorname{Weight}_{\theta}(N)$.
\end{proposition}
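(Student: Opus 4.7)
The plan begins by extracting an induction structure from $M$. Since $M$ is irreducible in $\operatorname{Mod}^{\operatorname{BK}}_k(\mathcal O)$, its \'etale $\varphi$-module $M[\tfrac{1}{u}]$ is irreducible, hence so is the $G_{K_\infty}$-representation $T(M)$. By Lemma~\ref{equiv} combined with Lemma~\ref{irredinduced}, after possibly enlarging $\mathbb F$ we may write $T(M)\cong\operatorname{Ind}_{L_\infty}^{K_\infty}\chi$ for a character $\chi\colon G_{L_\infty}\to\mathbb F^\times$, where $L/K$ is the unramified extension of degree $n=\operatorname{rank}_{k[[u]]\otimes\mathbb F}M$ with residue field $l$.

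I then construct a candidate $N$. Pick any rank-one $\widetilde N\in\operatorname{Mod}^{\operatorname{BK}}_l(\mathcal O)$ with $T(\widetilde N)=\chi$; such objects exist by the preceding rank-one lemma (enlarging $\mathbb F$ if needed). By Lemma~\ref{yoneda}, $T(f_*\widetilde N)=\operatorname{Ind}_{L_\infty}^{K_\infty}\chi\cong T(M)$, so a nonzero $G_{K_\infty}$-equivariant map $T(M)\to T(f_*\widetilde N)$ exists; since rescaling $\widetilde N$ by arbitrary powers $u^{a_\theta}$ of $u$ does not change $T(\widetilde N)$, we can rescale so that this Galois map comes from a morphism $g\colon M\to f_*\widetilde N$ of BK modules. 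The map $g$ is automatically injective because $M$ is irreducible and $f_*\widetilde N$ is $u$-torsion-free (the kernel of $g$ would have torsion-free quotient, so would have to be $0$ or $M$). Finally I define $N\subset\widetilde N[\tfrac1u]$ to be the smallest rank-one $l[[u]]\otimes\mathbb F$-stable BK lattice with $g(M)\subset f_*N$; concretely, $N_\theta$ is the $\mathbb F[[u]]$-lattice in $(\widetilde N[\tfrac1u])_\theta$ generated by the image of $g(M)_{\theta|_k}$ under the $\theta$-projection.

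The main step is showing $u(f_*N)\subset g(M)$. By construction, for each $\theta$ there exists $m_\theta\in g(M)_{\theta|_k}$ whose $\theta$-projection is a generator $v_\theta$ of $N_\theta$, so $m_\theta=v_\theta+\sum_{\theta'\ne\theta,\,\theta'|_k=\theta|_k}c_{\theta,\theta'}v_{\theta'}$ with $c_{\theta,\theta'}\in\mathbb F[[u]]$. Then $um_\theta\in g(M)$ differs from $uv_\theta$ by the correction term $\sum_{\theta'}uc_{\theta,\theta'}v_{\theta'}$, and the task is to show that all such corrections can be absorbed into $g(M)$. This is the main technical obstacle. The argument must use the fact that $M$ is not an arbitrary lattice but satisfies strong divisibility and has irreducible Galois module. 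Concretely, I would argue by induction on $|\{\theta:\delta_\theta=1\}|$, exploiting that Frobenius $\varphi_M$ cyclically permutes the $\theta$-components via $\theta\mapsto\theta\circ\varphi$ and that the SD criterion in Lemma~\ref{equivcond} furnishes a basis of $M$ adapted to $\varphi$; irreducibility ensures that the Frobenius cycle closes, so the inductive chain of corrections terminates and yields $uv_\theta\in g(M)$ in every case.

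Once $u(f_*N)\subset g(M)\subset f_*N$ is established, both the strong divisibility of $N$ and the weight statement fall out by comparing $M$ with the two rank-one sandwich bounds $f_*(u^{\delta_\theta}N)\subset g(M)\subset f_*N$, where $u^{\delta_\theta}N$ denotes the BK lattice whose $\theta$-component is $u^{\delta_\theta}N_\theta$ (which lies in $g(M)$ by definition of $\delta_\theta$ together with $u(f_*N)\subset g(M)$). Applying Lemma~\ref{SDinductionrestriction}(2) gives $\operatorname{Weight}_\tau(f_*N)=\{r_\theta\}_{\theta|_k=\tau}$ and $\operatorname{Weight}_\tau(f_*(u^{\delta_\theta}N))=\{r_\theta+p\delta_{\theta\circ\varphi}-\delta_\theta\}_{\theta|_k=\tau}$; since the quotients $(f_*N)/g(M)$ and $g(M)/f_*(u^{\delta_\theta}N)$ are each killed by $u$ and $M$ is strongly divisible with weights in $[0,p]$, both multisets must appear inside $\operatorname{Weight}_\tau(M)$, and in particular $r_\theta\in[0,p]$, establishing the strong divisibility of $N$ as well as the claimed weight containment.
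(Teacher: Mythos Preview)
Your proposal has two genuine gaps, and the paper's proof shows that your order of deduction is inverted.

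\textbf{The ``main step'' is not proved.} You admit that showing $u(f_*N)\subset g(M)$ is the main technical obstacle, and your proposed induction on $|\{\theta:\delta_\theta=1\}|$ is only a sketch. It is not clear how the inductive step would go: the corrections $\sum_{\theta'}uc_{\theta,\theta'}v_{\theta'}$ involve several $\theta'$ simultaneously, and the Frobenius cycle does not obviously terminate the chain. More importantly, this step turns out to be a \emph{consequence} of the weight statement, not a prerequisite.

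\textbf{The weight argument does not follow from the stated lemmas.} You want to conclude that $\operatorname{Weight}_\tau(f_*N)$ and $\operatorname{Weight}_\tau(f_*(u^{\delta_\theta}N))$ lie inside $\operatorname{Weight}_\tau(M)$ from the fact that the quotients $(f_*N)/g(M)$ and $g(M)/f_*(u^{\delta_\theta}N)$ are killed by $u$. But Proposition~\ref{SDexact}, the only tool relating weights along short exact sequences, requires the terms to lie in $\operatorname{Mod}^{\operatorname{BK}}_k(\mathcal O)$, i.e.\ to be \emph{free} over $k[[u]]\otimes\mathbb F$. A quotient killed by $u$ is not such an object, so the proposition does not apply. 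You also need $u^{\delta_\theta}N$ to be strongly divisible to invoke Lemma~\ref{SDinductionrestriction}(2), which already requires the weight bounds you are trying to prove.

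\textbf{How the paper proceeds.} The key idea you are missing is to work with $f^*M$ rather than directly with $M\subset f_*N$. The paper obtains $N$ as a \emph{quotient} of $f^*M$ (via Lemma~\ref{exactly} applied to the surjection $T(f^*M)\to\chi$), so $0\to K\to f^*M\to N\to 0$ is exact in $\operatorname{Mod}^{\operatorname{BK}}_l(\mathcal O)$ and Proposition~\ref{SDexact}(1) gives $N\in\operatorname{Mod}^{\operatorname{SD}}_l(\mathcal O)$ with $r_\theta\in\operatorname{Weight}_{\theta|_k}(M)$ immediately. Next, defining $\delta_\theta\ge0$ as the \emph{minimal} integer with $u^{\delta_\theta}e_\theta\in M$ (a priori possibly large), the submodule $P=\langle u^{\delta_\theta}e_\theta\rangle\subset N$ maps into $f^*M$ with $u$-torsion-free cokernel precisely by minimality of $\delta_\theta$; Proposition~\ref{SDexact}(1) again applies and gives $r_\theta+p\delta_{\theta\circ\varphi}-\delta_\theta\in\operatorname{Weight}_{\theta|_k}(M)$. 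Only then does one deduce $\delta_\theta\in\{0,1\}$: since both displayed quantities lie in $[0,p]$, a telescoping sum $\sum_{i}p^{i-1}(p\delta_{\theta\circ\varphi^i}-\delta_{\theta\circ\varphi^{i-1}})=(p^{[l:\mathbb F_p]}-1)\delta_\theta$ forces $\delta_\theta\le p/(p-1)$, hence $\delta_\theta\le1$ (with a short extra argument when $p=2$). So the containment $u(f_*N)\subset M$ is the \emph{output}, not the input.
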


The construction is identical to that given in \cite[\S6.3]{B18}. For the convenience of the reader we repeat the arguments. After Lemma~\ref{rk1} we may describe $N$ explicitly as
$$
N = k[[u]] \otimes_{\mathbb{F}_p} \mathbb{F}, \qquad \varphi_N(1) = x\sum u^{r_\theta} e_\theta
$$
for some $x \in \mathbb{F}^\times$, with the sum running over $\theta \in \operatorname{Hom}_{\mathbb{F}_p}(l,\mathbb{F})$.

\begin{proof}
First we show that $M$ being irreducible implies $T(M)$ is irreducible as a $G_{K_\infty}$-representation. This follows from the next lemma.

\begin{lemma}\label{exactly}
	Let $M \in \operatorname{Mod}^{\operatorname{BK}}_k(\mathcal{O})$ and let $0 \rightarrow T_1 \rightarrow T(M) \rightarrow T_2 \rightarrow 0$ be an exact sequence of $G_{K_\infty}$-representations. Then there exists an exact sequence $0 \rightarrow M_1 \rightarrow M \rightarrow M_2 \rightarrow 0$ in $\operatorname{Mod}^{\operatorname{BK}}_k(\mathcal{O})$ which recovers the first exact sequence after applying $M \mapsto T(M)$.
\end{lemma}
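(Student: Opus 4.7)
The plan is to exploit Fontaine's equivalence between finite \'etale $\varphi$-modules over $k((u)) \otimes_{\mathbb{F}_p} \mathbb{F}$ and continuous finite $\mathbb{F}$-representations of $G_{K_\infty}$. Composing with $M \mapsto M[\tfrac{1}{u}]$, this equivalence matches the restriction of $T$ to $\operatorname{Mod}^{\operatorname{BK}}_k(\mathcal{O})$; in particular, the subrepresentation $T_1 \subset T(M)$ lifts uniquely to a $\varphi$-stable sub-$k((u)) \otimes_{\mathbb{F}_p} \mathbb{F}$-module $N_1 \subset M[\tfrac{1}{u}]$ with the property that $\varphi$ induces an isomorphism $N_1 \otimes_{\varphi, k((u))} k((u)) \cong N_1$.

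Given such an $N_1$, I would set
$$
M_1 = M \cap N_1 \subset M[\tfrac{1}{u}], \qquad M_2 = M/M_1.
$$
Since $N_1$ and $M$ are each $\varphi$-stable inside $M[\tfrac{1}{u}]$, so is $M_1$. The inclusion $M_2 \hookrightarrow M[\tfrac{1}{u}]/N_1$ shows that $M_2$ has no $u$-torsion. Decomposing via the idempotents $e_\tau$ giving $k[[u]] \otimes_{\mathbb{F}_p} \mathbb{F} = \prod_\tau \mathbb{F}[[u]]$, each $M_{2,\tau}$ is a finitely generated $u$-torsion-free module over the PID $\mathbb{F}[[u]]$ and is therefore free; the submodule $M_{1,\tau} \subset M_\tau$ of the free module $M_\tau$ is then also free by the structure theorem. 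Hence both $M_1$ and $M_2$ are free over $k[[u]] \otimes_{\mathbb{F}_p} \mathbb{F}$.

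To confirm $M_1, M_2 \in \operatorname{Mod}^{\operatorname{BK}}_k(\mathcal{O})$ it only remains to verify that the Frobenius on each becomes an isomorphism after inverting $u$. But by construction $M_1[\tfrac{1}{u}] = N_1$ and $M_2[\tfrac{1}{u}] = M[\tfrac{1}{u}]/N_1$, both of which are \'etale $\varphi$-modules. Finally, applying the functor $T$ to $0 \to M_1 \to M \to M_2 \to 0$ and invoking its exactness from Lemma~\ref{BKF}, together with $T(M_1) = T_1$ (which holds by the equivalence), recovers the original exact sequence.

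The one substantive step is the $u$-torsion-freeness of $M_2 = M/M_1$, which is precisely why the saturated choice $M_1 = M \cap N_1$ is forced; a less careful lift of $N_1$ into $M$ could easily fail this property. Once the torsion-free property is in hand, the remaining verifications reduce to the structure theorem for finitely generated modules over the PID $\mathbb{F}[[u]]$, and everything else is formal.
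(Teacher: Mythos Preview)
Your argument is correct. It differs from the paper's proof only in the ambient object used to carve out the submodule: the paper invokes the comparison $M \otimes_{\mathfrak{S}} W(C^\flat) \cong T(M) \otimes_{\mathbb{Z}_p} W(C^\flat)$ from Lemma~\ref{BKF}, defines $M_2$ as the image of $M$ inside $T_2 \otimes_{\mathbb{Z}_p} W(C^\flat)$, and sets $M_1 = \ker(M \to M_2)$; you instead pass through Fontaine's equivalence for \'etale $\varphi$-modules over $k((u)) \otimes_{\mathbb{F}_p} \mathbb{F}$ and take $M_1 = M \cap N_1$. These produce the same saturated submodule, so the two constructions agree. Your route has the advantage of staying within the mod~$p$ world and of spelling out why $M_1$ and $M_2$ are free over $k[[u]] \otimes_{\mathbb{F}_p} \mathbb{F}$ (a point the paper leaves implicit); the paper's route is shorter because it leans directly on Lemma~\ref{BKF} rather than appealing to an external equivalence.
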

\begin{proof}
	Since $T(M) \otimes_{\mathbb{Z}_p} W(C^\flat) = M \otimes_{\mathfrak{S}} W(C^\flat)$ we obtain a surjection $M \otimes_{\mathfrak{S}} W(C^\flat) \rightarrow T_2 \otimes_{\mathbb{Z}_p} W(C^\flat)$. Let $M_2$ be the image of $M$ under this surjection. Then $M_2 \otimes_{\mathfrak{S}} W(C^\flat) = T_2 \otimes_{\mathbb{Z}_p} W(C^\flat)$ so $T(M_2) = T_2$. Take $M_1$ to be the kernel of $M \rightarrow M_2$.
\end{proof}

Enlarging $\mathbb{F}$ if necessary we can suppose $l \subset \mathbb{F}$ and that $T(M) \cong \operatorname{Ind}_{L_\infty}^{K_\infty} \chi$ (using Lemma~\ref{irredinduced} and Lemma~\ref{equiv}) for a character $\chi$. Frobenius reciprocity gives a non-zero map $T(M)|_{G_{L_\infty}} \rightarrow \chi$. Lemma~\ref{yoneda} implies $T(M)|_{G_{L_\infty}} \cong T(f^*M)$, so Lemma~\ref{exactly} produces a surjection $f^*M \rightarrow N$ for a rank one $N \in \operatorname{Mod}^{\operatorname{BK}}_l(\mathcal{O})$ with $T(N) = \chi$. Lemma~\ref{yoneda} then implies there exists a map 
$$
M \rightarrow f_*N
$$
which, after applying $T$, induces the isomorphism $T(M) \cong \operatorname{Ind}_{L_\infty}^{K_\infty} \chi$. In other words $M \rightarrow f_*N$ becomes an isomorphism after inverting $u$; and so is injective.

Lemma~\ref{SDinductionrestriction} and Proposition~\ref{SDexact} imply $N \in \operatorname{Mod}_{l}^{\operatorname{SD}}(\mathcal{O})$ and $\operatorname{Weight}_\theta(N) \subset \operatorname{Weight}_\theta(f^*M) = \operatorname{Weight}_{\theta|_k}(M)$. 

Now let us assume that $N$ is as in Lemma~\ref{rk1}, so that $N$ is generated over $\mathbb{F}[[u]]$ by the $e_\theta$, with $\varphi_N(e_{\theta \circ \varphi}) = xu^{r_{\theta}} e_\theta$. Since $M \rightarrow f_*N$ is an isomorphism after inverting $u$ we may consider the smallest integer $\delta_\theta \geq 0$ satisfying $u^{\delta_\theta} e_\theta \in M$. We shall show that $\delta_\theta \in [0,1]$ which shows $\delta_\theta$ is equal to the $\delta_\theta$ defined in the proposition. Let $P \in \operatorname{Mod}^{\operatorname{BK}}_l(\mathcal{O})$ be the sub-Breuil--Kisin module of $N$ generated by the $u^{\delta_\theta}e_\theta$. The map $P \rightarrow f^*M$ given by $ u^{\delta_\theta}e_\theta \mapsto e_\theta( u^{\delta_\theta} e_\theta \otimes 1)$ is $\varphi$-equivariant and has $u$-torsion-free cokernel, by definition of the $\delta_\theta$. Therefore Proposition~\ref{SDexact} implies $P \in \operatorname{Mod}^{\operatorname{SD}}_l(\mathcal{O})$ and $\operatorname{Weight}_\theta(P) \subset \operatorname{Weight}_\theta(f^*M) = \operatorname{Weight}_{\theta|_k}(M)$. Since $\operatorname{Weight}_\theta(P) = \lbrace r_\theta + p \delta_{\theta \circ \varphi} - \delta_\theta \rbrace$ it just remains to prove $\delta_{\theta} \in [0,1]$.

Since $r_{\theta},r_{\theta} + p\delta_{\theta \circ \varphi} - \delta_\theta$ are both contained in $\operatorname{Weight}_{\theta|_k}(M)$ both are in $[0,p]$. Thus $p \delta_{\theta \circ \varphi} - \delta_\theta  \leq p$ and so
$$
(p^{[l:\mathbb{F}_p]} - 1)\delta_\theta = \sum_{i=1}^{[l:\mathbb{F}_p]} p^{i-1}( p\delta_{\theta \circ \varphi^i} - \delta_{\theta \circ \varphi^{i-1}}) \leq p(p^{[l:\mathbb{F}_p]} -1)/(p-1)
$$
which shows $\delta_{\theta} \in [0,1]$ unless $p=2$, in which case we deduce $\delta_{\theta} \in [0,2]$. If $p=2$ and $\delta_{\theta \circ \varphi} = 2$ then as $r_{\theta} + p\delta_{\theta \circ \varphi} - \delta_\theta \in [0,p]$ it follows that $r_\theta = 0$ and $\delta_{\theta} = 2$. Thus if $\delta_\theta \not\in [0,1]$ for some $\theta$ then $r_\theta = 0$ for all $\theta$; this implies $T(N)$ is an unramified character and so $\operatorname{Ind}_{L_\infty}^{K_\infty} T(N)$ is not irreducible, a contradiction.
\end{proof}

\begin{remark}\label{surjec}
	The surjection $f^*M \rightarrow N$ can be recovered from the inclusion $M \rightarrow f_*N$ explicitly. On $(f^*M)_\theta = M_{\theta|_k}$ it is given by $\sum_{\theta'|_k = \theta|_k} \alpha_{\theta'} e_{\theta'} \mapsto \alpha_\theta e_\theta$. In particular, for every $\theta$ there exist $\alpha_{\theta'} \in \mathbb{F}[[u]]$ such that 
	$$
	e_{\theta} + \sum_{ \theta' \neq \theta} \alpha_{\theta'} e_{\theta'} \in M 
	$$
\end{remark}

\subsection{Low dimensional cases} For weights in the Fontaine--Laffaille range $[0,p-1]$ the inclusion $M \subset f_*N$ from Proposition~\ref{approximate} is an equality:
\begin{lemma}\label{FLrange}
	Suppose $M \in \operatorname{Mod}^{\operatorname{SD}}_k(\mathcal{O})$ is irreducible and $\operatorname{Weight}_\tau(M) \subset [0,p-1]$ for every $\tau$. Then $M = f_*N$ for a rank one $N \in \operatorname{Mod}_l^{\operatorname{SD}}(\mathcal{O})$
\end{lemma}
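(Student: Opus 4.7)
The plan is to feed $M$ into Proposition~\ref{approximate}, which yields (after enlarging $\mathbb{F}$) an inclusion $M \hookrightarrow f_*N$ with $N \in \operatorname{Mod}^{\operatorname{SD}}_l(\mathcal{O})$ of rank one, described explicitly by $\varphi_N(1) = x\sum_\theta u^{r_\theta} e_\theta$, together with integers $\delta_\theta \in \{0,1\}$ that measure how far $M$ sits from the full module: $\delta_\theta = 0$ exactly when $N_\theta \subset M$. Since $f_*N = \bigoplus_\theta \mathbb{F}[[u]] e_\theta$ as an $\mathbb{F}[[u]]$-module, the inclusion is an equality as soon as every $\delta_\theta$ vanishes. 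So the whole problem reduces to forcing $\delta_\theta = 0$ for all $\theta \in \operatorname{Hom}_{\mathbb{F}_p}(l,\mathbb{F})$.

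For this I would use the weight information coming from the same proposition: both $r_\theta$ and $r_\theta + p\delta_{\theta \circ \varphi} - \delta_\theta$ lie in $\operatorname{Weight}_{\theta\vert_k}(M)$. The hypothesis $\operatorname{Weight}_\tau(M) \subset [0,p-1]$ then gives the two bounds $r_\theta \leq p-1$ and $r_\theta + p\delta_{\theta \circ \varphi} - \delta_\theta \leq p-1$. The second of these, together with $r_\theta \geq 0$ and $\delta_\theta \leq 1$, forces the implication
\[
\delta_{\theta \circ \varphi} = 1 \;\Longrightarrow\; r_\theta = 0 \text{ and } \delta_\theta = 1.
\]
Reindexed, this says the set $S := \{\theta : \delta_\theta = 1\}$ is stable under $\theta \mapsto \theta \circ \varphi^{-1}$.

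The decisive step is then the orbit argument: the cyclic group $\operatorname{Gal}(l/\mathbb{F}_p) = \langle \varphi \rangle$ acts simply transitively on $\operatorname{Hom}_{\mathbb{F}_p}(l,\mathbb{F})$, so $S$ is either empty or all of $\operatorname{Hom}_{\mathbb{F}_p}(l,\mathbb{F})$. If it is everything, then $r_\theta = 0$ for every $\theta$, which makes $\varphi_N$ act as a scalar on a rank one module and forces $T(N)$ to be an unramified character of $G_{L_\infty}$. Via Lemma~\ref{yoneda}, $T(M) \cong \operatorname{Ind}_{L_\infty}^{K_\infty} T(N)$ would then be a sum of characters, which contradicts the irreducibility of $T(M)$---this irreducibility being the same consequence of Lemma~\ref{exactly} established at the start of the proof of Proposition~\ref{approximate}. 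Hence $S = \emptyset$ and $M = f_*N$.

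The only place one might worry is the degenerate case $\operatorname{dim}_{\mathbb{F}} T(M) = 1$, where $L = K$ and one simply takes $N = M$; otherwise $[L:K]\geq 2$ so the induced representation is genuinely reducible and the contradiction above goes through. The main substantive content really is the short arithmetic implication in paragraph two: it is exactly the Fontaine--Laffaille gap $p-1 < p$ that prevents $\delta_{\theta \circ \varphi} = 1$ from ever happening, and without it the argument would collapse---this is why the lemma is restricted to weights in $[0,p-1]$ rather than $[0,p]$.
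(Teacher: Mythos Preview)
Your proof is correct and follows essentially the same route as the paper: invoke Proposition~\ref{approximate}, use the weight bound $[0,p-1]$ to show that $\delta_{\theta\circ\varphi}=1$ forces $\delta_\theta=1$, conclude by transitivity of the $\varphi$-action that either all $\delta_\theta$ vanish or all equal $1$, and rule out the latter via $r_\theta=0$ for all $\theta$ contradicting irreducibility of $T(f_*N)\cong T(M)$. The paper phrases the key implication contrapositively (the pattern $(\delta_{\theta\circ\varphi},\delta_\theta)=(1,0)$ would put $r_\theta+p$ in $[0,p-1]$) and is terser about the orbit argument and the unramified-character contradiction, but the content is the same; your separate handling of the rank-one case is unnecessary since in that case the map $M\to f_*N$ produced in Proposition~\ref{approximate} is already an isomorphism.
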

\begin{proof}
	With notation as in Proposition~\ref{approximate}, if $\delta_{\theta \circ \varphi} =1$ and $\delta_{\theta} = 0$ then $r_{\theta} + p \in \operatorname{Weight}_{\theta|_k}(M) \subset [0,p-1]$ which is impossible. Therefore either all $\delta_\theta = 0$ (in which case $M = f_*N$) or all $\delta_\theta = 1$. In the later case, since $r_{\theta} + p-1 \in [0,p-1]$ it follows that $r_{\theta} = 0$ for all $\theta$, contradicting the irreducibility of $T(f_*N)$.
\end{proof}

On the other hand the previous lemma does not hold for every irreducible $M \in \operatorname{Mod}^{\operatorname{SD}}_k(\mathcal{O})$. In \cite[\S6.4]{B18} an example is given when $K = \mathbb{Q}_p$ and $\operatorname{dim}_{\mathbb{F}_p} T(M) = 5$. We conclude this section by showing this is the lowest dimensional counterexample. We do this by an explicit calculation.
\begin{proposition}\label{Qpcase}
	Suppose $k = \mathbb{F}_p$. Let $M \in \operatorname{Mod}^{\operatorname{SD}}_{k}(\mathcal{O})$ be irreducible with $\operatorname{dim}_{\mathbb{F}} T(M) \leq 4$. After possibly enlarging $\mathbb{F}$, $M \cong f_*N$ for a rank one $N \in \operatorname{Mod}^{\operatorname{SD}}_l(\mathcal{O})$.
\end{proposition}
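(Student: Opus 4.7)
The plan is to apply Proposition~\ref{approximate} to obtain an inclusion $\iota\colon M \hookrightarrow f_*N$ for some rank one $N \in \operatorname{Mod}^{\operatorname{SD}}_l(\mathcal{O})$ with $u(f_*N) \subset \iota(M)$, so $\delta_\theta \in \{0,1\}$ for every $\theta \in \operatorname{Hom}_{\mathbb{F}_p}(l,\mathbb{F})$. It suffices to show that the set $S := \{\theta : \delta_\theta = 1\}$ is empty. Suppose for contradiction it is not. For each $\theta \in S$, Remark~\ref{surjec} yields an element $e_\theta + \sum_{\theta' \neq \theta} \alpha_{\theta'}^\theta e_{\theta'} \in M$. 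Since $e_{\theta'} \in M$ for $\theta' \notin S$, subtracting the corresponding terms produces
\[
h_\theta := e_\theta + \sum_{\theta' \in S\setminus\{\theta\}} \alpha_{\theta'}^\theta \, e_{\theta'} \in M.
\]
Form the $|S|\times|S|$ matrix $C$ over $\mathbb{F}$ with diagonal entries $1$ and off-diagonal entries $C_{\theta,\theta'} = \alpha_{\theta'}^\theta(0)$. If $C$ is invertible, an $\mathbb{F}$-linear combination of the $h_\theta$ differs from $e_\theta$ (for any chosen $\theta \in S$) by an element of $u(f_*N) \subset M$, giving $e_\theta \in M$ and contradicting $\delta_\theta = 1$. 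Hence $C$ must be singular. This already disposes of $|S|=1$, since then $C = (1)$ is invertible.

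For $|S| \in \{2,3,4\}$, I would proceed by explicit computation, broken into sub-cases according to both $|S|$ and the orbit structure of $S$ under the cyclic permutation $\theta \mapsto \theta\circ\varphi$. In each sub-case, use the singularity of $C$ to select a free $\mathbb{F}[[u]]$-basis of $M$ consisting of the $e_{\theta'}$ for $\theta' \notin S$, a maximal collection of $h_\theta$'s giving linearly independent rows of $C$, and $u\,e_{\theta''}$ for the remaining $\theta'' \in S$ (these lie in $M$ by $u(f_*N) \subset M$). Write out the matrix $A$ of $\varphi$ in this basis using $\varphi(e_\theta) = x u^{r_{\theta\circ\varphi^{-1}}} e_{\theta\circ\varphi^{-1}}$; the $u\,e_{\theta''}$ basis vectors contribute a factor of $u^p$ under $\varphi$, and the $h_\theta$'s contribute off-diagonal terms proportional to $\varphi(\alpha_{\theta'}^\theta)$. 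Applying Remark~\ref{explicit}, one reads off $\operatorname{Weight}_{\tau_0}(M)$ from the Smith decomposition $A = P \operatorname{diag}(u^{s_i}) Q$.

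The conclusion in each sub-case should be that some $s_i > p$, contradicting strong divisibility. The local weight inequalities from Proposition~\ref{approximate} (namely $r_\theta, r_\theta + p\delta_{\theta\circ\varphi} - \delta_\theta \in [0,p]$) force $r_\theta = 0$ whenever $\delta_\theta = 0$ and $\delta_{\theta\circ\varphi} = 1$, while irreducibility of $T(M) \cong \operatorname{Ind}_{L_\infty}^{K_\infty} T(N)$ prevents all $r_\theta$ from vanishing; together with the cyclic structure of the $\delta$-pattern, these facts conspire to produce a $u$-power exceeding $p$ in the Smith form. The main technical obstacle is the book-keeping: one must enumerate the finitely many orbit types of $S$ under $\theta \mapsto \theta\circ\varphi$ for each $|S| \in \{2,3,4\}$ and also verify the unipotent condition $Q \in \operatorname{GL}(\mathbb{F}[[u^p]])$ of Lemma~\ref{equivcond} in each. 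The hypothesis $n \leq 4$ keeps the enumeration short and forces the weight overflow; for $n = 5$ the cyclic freedom accommodates genuine counterexamples, as in \cite[\S6.4]{B18}.
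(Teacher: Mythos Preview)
Your plan is essentially the same as the paper's: apply Proposition~\ref{approximate}, then rule out each nonempty pattern of $\delta_\theta$'s by explicit computation of $\varphi$ in a well-chosen basis and an appeal to Remark~\ref{explicit}. The paper organises the case split slightly differently (first the case all $\delta_\theta = 1$, then the possible shapes of the image $V = M/u(f_*N) \subset (f_*N)/u(f_*N)$ when some $\delta_\theta = 0$), but the substance is the same and your $|S|=1$ observation via the matrix $C$ is a clean way to start.

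There is, however, a genuine gap in your expected mechanism of contradiction. You predict that in every sub-case ``some $s_i > p$'', and you treat the condition $Q \in \operatorname{GL}_n(\mathbb{F}[[u^p]])$ as something merely to be verified. In the paper this is false in at least one case. When $|S| = 2$ with $S = \{\theta\circ\varphi,\theta\circ\varphi^3\}$ (so $\delta_0=\delta_2=0$, $\delta_1=\delta_3=1$), the constraints from Proposition~\ref{approximate} give $r_0=r_2=0$ and $r_1,r_3\in[1,p]$, and the Smith form of the matrix of $\varphi$ has exponents $0,\,r_1-1,\,p,\,r_3$, all in $[0,p]$. No weight overflows. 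The contradiction comes instead from the $B$-matrix: it contains an entry $\alpha u^{r_3-r_1}$ which, by strong divisibility, must lie in $\mathbb{F}[[u^p]]$, forcing $r_1=r_3$; but then $r_0=r_2$ and $r_1=r_3$ make $f_*N$ reducible (the sub-module generated by $e_3+e_1$ and $e_2+e_0$ is $\varphi$-stable), contradicting irreducibility of $M$. So the $\mathbb{F}[[u^p]]$ condition is not a side check but the actual source of the contradiction in this sub-case, and your heuristic ``irreducibility prevents all $r_\theta$ from vanishing \ldots\ these facts conspire to produce a $u$-power exceeding $p$'' is too optimistic.

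Similarly, for $|S|=4$ the paper does not compute a Smith form at all. The bounds force $r_\theta\in\{0,1\}$ for all $\theta$; three of the four $r$-patterns (up to cyclic shift) make $f_*N$ reducible outright, and for the remaining pattern $(0,0,1,1)$ one applies $\varphi$ twice to a single element coming from Remark~\ref{surjec} and reads off $e_{\theta\circ\varphi^2}\in M$ directly, contradicting $\delta_{\theta\circ\varphi^2}=1$. This is quicker than a matrix computation and, again, is not a weight-overflow argument. Your plan would go through if you revise the expected endgame to allow for these two other contradiction types.
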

\begin{proof}
	We only consider the case $\operatorname{dim}_{\mathbb{F}}T(M) =4$, the $2$ and $3$-dimensional cases being much easier. We put ourselves in the situation of Proposition~\ref{approximate}. 
	
	First suppose $e_{\theta} \not\in M$ for all $\theta$ (i.e. $\delta_\theta =1$). Then $r_\theta + p -1 \in [0,p]$ so $r_\theta \in [0,1]$. Fix $\theta \in \operatorname{Hom}_{\mathbb{F}_p}(l,\mathbb{F})$ and set $r := (r_{\theta \circ \varphi^3},r_{\theta \circ \varphi^2},r_{\theta \circ \varphi},r_\theta)$. By replacing $\theta$ with $\theta \circ \varphi^i$ we may assume $r$ is one of the following:
	$$
	(0,0,0,0), \quad (1,1,1,1), \quad  (1,0,1,0), \quad (0,0,1,1)
	$$
	Note that in the first three cases $f_*N$ is not irreducible (consider the sub-Breuil--Kisin module generated by $e_{\theta} + e_{\theta \circ \varphi}^2$ and $e_{\theta \circ \varphi} + e_{\theta \circ \varphi^3}$), so only the last case is possible. Since $u(f_*N) \subset M$, Remark~\ref{surjec} implies there exist $\alpha, \beta ,\gamma \in \mathbb{F}$ not all zero such that $\gamma e_{\theta \circ \varphi^3} + \beta e_{\theta \circ \varphi^2} + \alpha e_{\theta \circ \varphi} + e_\theta \in M$. Applying $\varphi$ gives that $x(\gamma e_{\theta \circ \varphi^2} + \beta ue_{\theta \circ \varphi} + \alpha ue_{\theta} + e_{\theta \circ \varphi^3}) \in M$ and so $e_{\theta \circ \varphi^3} + \gamma e_{\theta \circ \varphi^2} \in M$. Applying $\varphi$ again gives $e_{\theta \circ \varphi^2} + \gamma u e_{\theta \circ \varphi} \in M$, so $e_{\theta \circ \varphi^2} \in M$, a contradiction.
	
	Now suppose $e_{\theta} \in M$ and $e_{\theta \circ \varphi} \not\in M$ (i.e. $\delta_{\theta} = 0, \delta_{\theta \circ \varphi} =1$). The requirement of Remark~\ref{surjec} restricts the possible image of $M/u(f_*N) \hookrightarrow (f_*N)/u(f_*N)$; it must be an $\mathbb{F}$-subspace $V \subset (f_*N)/u(f_*N)$ not containing $e_{\theta \circ \varphi}$ and such that each projection $V \rightarrow (f_*N)/u(f_*N) \xrightarrow{\operatorname{proj}} \mathbb{F}e_{\theta \circ \varphi^i}$ is surjective. One easily checks any $V$ satisfying these properties is one of the following, for some $\alpha,\beta \in \mathbb{F}^\times$:
	\begin{enumerate}
		\item $V = \mathbb{F}( e_{\theta \circ \varphi^3} + \alpha e_{\theta \circ \varphi^2} + \beta e_{\theta \circ \varphi}) + \mathbb{F} e_{\theta}$,
		\item $V = \mathbb{F}( e_{\theta \circ \varphi^3} + \alpha e_{\theta \circ \varphi}) + \mathbb{F}( e_{\theta \circ \varphi^2} + \beta e_{\theta \circ \varphi}) + \mathbb{F}e_\theta$,
		\item $V = \mathbb{F}e_{\theta \circ \varphi^3} + \mathbb{F}( e_{\theta \circ \varphi^2} + \alpha e_{\theta \circ \varphi}) + \mathbb{F} e_\theta$,
		\item $V = \mathbb{F}( e_{\theta \circ \varphi^3} + \alpha e_{\theta \circ \varphi}) + \mathbb{F}e_{\theta \circ \varphi^2} + \mathbb{F} e_{\theta}$.
	\end{enumerate}
	In each of (1)-(4) we write $e_{\theta \circ \varphi^i}$ when we mean the image of $e_{\theta \circ \varphi^i}$ in $(f_*N)/u(f_*N)$. To ease notation we now write $e_i$ in place of $e_{\theta \circ \varphi^i}$, likewise we write $r_{i} = r_{\theta \circ \varphi^i}$ and $\delta_i = \delta_{\theta \circ \varphi^i}$. We now show each of (1)-(4) cannot occur.
	
	\begin{itemize}
		\item In case (1) the elements $e_{3} + \alpha e_{2} + \beta e_{1}$, $ue_{2}$, $u e_{1}$ and $e_{0}$ of $M$ form an $\mathbb{F}[[u]]$-basis. Note then that $\delta_0 = 0, \delta_1 = \delta_2 = \delta_3 = 1$ and so $r_{0} = 0, r_{1}, r_2 \in [0,1]$ and $r_3 > 0$. Further, since $\varphi(e_3 + \alpha e_2 + \beta e_1) = x(u^{r_2}e_2+ u^{r_1}\alpha e_1 + \beta e_0) \in M$ we must have $r_1= r_2 = 1$. Therefore, with respect to the above basis, the matrix of $\varphi_M$ is 
		{\small
		$$
		\hspace*{-1.3cm}
		x\begin{pmatrix}
		0 & 0 & 0 & u^{r_{3}} \\ 1 & 0 & 0 & -\alpha u^{r_3-1} \\ \alpha & u^p & 0 & -\beta u^{r_3-1} \\ \beta & 0 & u^p & 0
		\end{pmatrix} = x\begin{pmatrix}
		0 & 0 & 0 & \frac{1}{\beta} \\ 0 & -\frac{\beta}{\alpha} & 1 & \frac{\beta - \alpha^2}{\alpha \beta} \\ \alpha \beta & \beta u & 0 & -u \\ 0 & -\frac{1}{\alpha} & 0 & \frac{1}{\alpha \beta}
		\end{pmatrix}^{-1} \begin{pmatrix}
		1 & 0 & 0 & 0 \\ 0 & u^p & 0 & 0 \\ 0 & 0 & u^{p+1} & 0 \\ 0 & 0 & 0 & u^{r_3-1}
		\end{pmatrix}\begin{pmatrix}
		1 & 0 & \frac{u^p}{\beta} & 0 \\ 0 & 1 & \frac{-\alpha^2 +\beta}{\alpha\beta} & 0 \\ 0 & 0 & -1 & 0 \\ 0 & 0 & \frac{u^{p+r_3-1}}{\alpha\beta} & 1
		\end{pmatrix}^{-1}
		$$	
		} 
		Remark~\ref{explicit} implies $p+1$ is a weight of $M$, so $M$ is not strongly divisible.
		\item In case (2) the elements $e_3+\alpha e_1,e_2+\beta e_1, ue_1,e_0$ form a basis of $M$ over $\mathbb{F}[[u]]$. In this case $\delta_3 = \delta_2 = \delta_1 = 1$ and $\delta_0 = 0$ so again $r_0 = 0$, $r_{1},r_2 \in [0,1]$, and $r_3 > 0$. Since $\varphi(e_3+\alpha e_1) \in M$ we deduce $r_2 =1$. Similarly $r_1 = 1$ because $\varphi(e_2+\beta e_1) \in M$. With respect to this basis the matrix of $\varphi_M$ is given by 
		\small{
		$$
		\hspace*{-1.7cm}
		\begin{aligned}
		x\begin{pmatrix}
		0 & 0 & 0 & u^{r_3} \\ u & 0 & 0 & 0 \\ -\beta & 1 & 0 & -\alpha u^{r_3-1} \\ \alpha & \beta & u^p & 0 
		\end{pmatrix} &=x
		\begin{pmatrix}
		0 & 0 & 0 & \frac{1}{\alpha} \\ 0 & 0 & \frac{\alpha}{\alpha+ \beta^2} & \frac{\beta}{\alpha+\beta^2} \\ -\alpha \beta & -(\alpha+\beta^2) & -\beta u & u \\ 1 & 0 & 0 & 0
		\end{pmatrix}^{-1} \begin{pmatrix}
		1 & 0 & 0 & 0 \\ 0 & 1 & 0 & 0 \\ 0 & 0 & u^{p+1} & 0 \\ 0 & 0 & 0 & u^{r_3}
		\end{pmatrix} \begin{pmatrix}
		1 & -\frac{\beta}{\alpha} & -\frac{u^p}{\alpha+ \beta^2} & -\frac{\alpha\beta u^{r_3-1}}{\alpha+ \beta^2} \\ 0 & 1 & -\frac{\beta u^p}{\alpha+ \beta^2} & \frac{\alpha^2 u^{r_3-1}}{\alpha+\beta^2} \\ 0 & 0& 1 & 0 \\ 0 & 0 & 0 & 1
		\end{pmatrix}^{-1} 
		\end{aligned}
		$$
		}
		if $\alpha+\beta^2 \neq 0$, or
		\small{
		$$
		\hspace*{-1.3cm}
		\begin{aligned}
		x\begin{pmatrix}
		0 & 0 & 0 & u^{r_3} \\ u & 0 & 0 & 0 \\ -\beta & 1 & 0 & -\alpha u^{r_3-1} \\ \alpha & \beta & u^p & 0 
		\end{pmatrix} &=x \begin{pmatrix}
		0 & 0 & 0 & -\frac{1}{\beta^2} \\ 0 & \beta & 0 & \frac{u}{\beta} \\ \beta^3 & 0 & -\beta u & u \\ 0 & 0 & \frac{1}{\beta^2} & -\frac{1}{\beta^3}
		\end{pmatrix}^{-1} \begin{pmatrix}
		1 & 0 & 0 & 0 \\ 0 & u & 0 & 0 \\ 0 & 0 & u^{p+1} & 0 \\ 0 & 0 & 0 & u^{r_3-1}
		\end{pmatrix} \begin{pmatrix}
		1 & \frac{1}{\beta} & 0 & 0 \\ 0 & 1 & -\frac{u^p}{\beta} & 0 \\ 0 & 0 & 1 & 0 \\ 0 & 0 & \frac{u^{p-r_3+1}}{\beta^3} & 1
		\end{pmatrix}^{-1}
		\end{aligned}
		$$	
		}
		if $\alpha+\beta^2 = 0$. Again $p+1$ is a weight of $M$ so $M$ is not strongly divisible.
		\item In case (3) the elements $e_3,e_2 + \alpha e_1,ue_1,e_0$ form a $\mathbb{F}[[u]]$-basis of $M$. Since $\delta_0 = 0$ and $\delta_1 = \delta_2 = 1$ we have $r_0 = 0$, $r_1 \in [0,1]$ and $r_2>0$. Further, since $\varphi(e_2+ \alpha e_1) \in M$ we must have $r_1 = 1$. With respect to this basis the matrix of $\varphi_M$ is given by 
		\small{
		$$
		\hspace*{-1.3cm}
		x\begin{pmatrix}
		0 & 0 & 0 & u^{r_3} \\ u^{r_2} & 0 & 0 & 0 \\ -\alpha u^{r_2-1} & 1 & 0 & 0 \\ 0 & \alpha & u^p & 0
		\end{pmatrix} = x
		\begin{pmatrix}
		0 & 0 & -\frac{1}{\alpha} & \frac{1}{\alpha^2} \\ 0 & 0 & 0 & \frac{1}{\alpha} \\ 0 & -\alpha^2 & -\alpha u & u \\ 1 & 0 & 0 & 0 
		\end{pmatrix}^{-1}
		\begin{pmatrix}
		u^{r_2-1} & 0 & 0 & 0 \\ 0 & 1 & 0 & 0 \\ 0 & 0 & u^{p+1} & 0 \\ 0 & 0 & 0 & u^{r_3}
		\end{pmatrix}
		\begin{pmatrix}
		1 & 0 & -\frac{u^{p-r_2+1}}{\alpha^2} & 0 \\ 0 & 1 & -\frac{u^p}{\alpha} & 0 \\ 0 & 0 & 1 & 0 \\ 0 & 0 & 0 & 1
		\end{pmatrix}^{-1}
		$$	
		}
		Once more $M$ is not strongly divisible.
		\item In case (4) the elements $e_3+\alpha e_1,e_2,ue_1,e_0$ form an $\mathbb{F}[[u]]$-basis of $M$. Since $\delta_0 = \delta_2 = 0$ and $\delta_1 = \delta_3 = 1$ we have $r_0 = r_2 = 0$ and $r_1,r_3 > 0$. With respect to this basis the matrix of $\varphi_M$ is given by
		\small{
		$$
		\hspace*{-1.3cm}
		x\begin{pmatrix}
		0 & 0 & 0 & u^{r_3} \\ 1 & 0 & 0 & 0 \\ 0 & u^{r_1-1} & 0 & -\alpha u^{r_3-1} \\ \alpha & 0 & u^p & 0
		\end{pmatrix} = x\begin{pmatrix}
		0 & 1 & 0 & 0 \\ 0 & 0 & 1 & 0 \\ 0 & -a & 0 & 1 \\ 1 & 0 & 0 & 0
		\end{pmatrix}^{-1} \begin{pmatrix}
		1 & 0 & 0 & 0 \\ 0 & u^{r_1-1} & 0 & 0 \\ 0 & 0 & u^p & 0 \\ 0 & 0 & 0 & u^{r_3} 
		\end{pmatrix} \begin{pmatrix}
		1 & 0 & 0 & 0 \\ 0 & 1 & 0 & \alpha u^{r_3-r_1} \\ 0 & 0 & 1 & 0 \\ 0 & 0 & 0 & 1
		\end{pmatrix}^{-1}
		$$	
		}If $M$ is strongly divisible then the rightmost matrix must lie in $\operatorname{GL}_n(\mathbb{F}[[u^p]])$. Therefore $p \mid r_3 - r_1$; as both lie in $[1,p]$ we must have $r_3 =r_1$. However then $f_*N$ is not irreducible (consider the sub-Breuil--Kisin module generated by $e_3+e_1$ and $e_2+ e_0$).
\end{itemize}

We conclude that $e_\theta \in M$ for every $\theta$. In other words, $M = f_*N$.
\end{proof}
\subsection{Crystalline liftings} We deduce the following.
\begin{corollary}\label{liftingcoro}
	Assume $K = K_0$. Let $M \in \operatorname{Mod}^{\operatorname{SD}}_k(\mathcal{O})$ be irreducible and assume that one of the following holds.
	\begin{enumerate}
		\item $M$ has rank one over $k[[u]] \otimes_{\mathbb{F}_p} \mathbb{F}$.
		\item $\operatorname{Weight}_\tau(M) \subset [0,p-1]$ for every $\tau \in \operatorname{Hom}_{\mathbb{F}_p}(k,\mathbb{F})$.
		\item $k = \mathbb{F}_p$ and $\operatorname{dim}_{\mathbb{F}} T(M) \leq 4$.
	\end{enumerate}
	Then, after possibly extending $\mathbb{F}$, there exists a crystalline $\mathcal{O}$-lattice $T$ such that $M(T) \otimes_{\mathcal{O}} \mathbb{F} \cong M$ and such that $\operatorname{HT}_\tau(T) = \operatorname{Weight}_\tau(M)$. Further, $T$ is induced over an unramified extension from a crystalline character.
\end{corollary}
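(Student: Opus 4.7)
The plan is to handle all three cases by reducing to a rank one Breuil--Kisin module over an unramified extension and then invoking Proposition~\ref{crystliftrk1}. Case (1) is immediate: by Lemma~\ref{rk1} the explicit form of $M$ lets us apply Proposition~\ref{crystliftrk1} directly to obtain a crystalline character $T$ (so tautologically induced from itself over the trivial extension) with $M(T)\otimes_{\mathcal{O}}\mathbb{F} \cong M$ and matching Hodge--Tate weights.

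In cases (2) and (3), I would first use Lemma~\ref{FLrange} respectively Proposition~\ref{Qpcase} to write, after possibly enlarging $\mathbb{F}$, an isomorphism $M \cong f_*N$, where $L/K$ is the unramified extension of degree $\operatorname{dim}_{\mathbb{F}} T(M)$ with residue field $l$, $f\colon \mathfrak{S}\rightarrow \mathfrak{S}_L$ is the inclusion of Subsection~\ref{indres}, and $N\in \operatorname{Mod}^{\operatorname{SD}}_l(\mathcal{O})$ is of rank one over $l[[u]]\otimes_{\mathbb{F}_p}\mathbb{F}$. Since $K=K_0$ and $L/K$ is unramified we also have $L=L_0$, so Proposition~\ref{crystliftrk1} applies with base field $L$ and produces a crystalline character $T_N\colon G_L\rightarrow \mathcal{O}^\times$ such that $M(T_N)\otimes_{\mathcal{O}}\mathbb{F}\cong N$ and $\operatorname{HT}_\theta(T_N)=\operatorname{Weight}_\theta(N)$ for each $\theta\in\operatorname{Hom}_{\mathbb{F}_p}(l,\mathbb{F})$.

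Next I would set $T:=\operatorname{Ind}_L^K T_N$. Induction along a finite unramified extension preserves crystallinity, so $T$ is a crystalline $\mathcal{O}$-lattice which is induced over $L/K$ from a crystalline character, as required. The key step is to identify the Breuil--Kisin module of $T$: by Theorem~\ref{kisin} the module $M(T)$ is characterised up to isomorphism by the $G_{K_\infty}$-representation $T|_{G_{K_\infty}}$, while the identifications in Lemma~\ref{yoneda} (combined with the Mackey formula, applicable because $L\cap K_\infty=K$) give
\[
T(f_*M(T_N)) \cong \operatorname{Ind}_{L_\infty}^{K_\infty}\bigl(T_N|_{G_{L_\infty}}\bigr) \cong (\operatorname{Ind}_L^K T_N)|_{G_{K_\infty}} = T|_{G_{K_\infty}}.
\]
Hence $M(T)\cong f_*M(T_N)$. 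Since $f_*$ is restriction of scalars along $\mathfrak{S}\rightarrow\mathfrak{S}_L$ and commutes with $-\otimes_{\mathcal{O}}\mathbb{F}$, reducing modulo a uniformiser of $\mathcal{O}$ gives $M(T)\otimes_{\mathcal{O}}\mathbb{F}\cong f_*N\cong M$, and Theorem~\ref{GLS} applied to $T$ yields $\operatorname{HT}_\tau(T)=\operatorname{Weight}_\tau(M(T)\otimes_{\mathcal{O}}\mathbb{F})=\operatorname{Weight}_\tau(M)$.

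The substantive work has already been carried out in this section: Lemma~\ref{FLrange} and especially the case-by-case analysis in Proposition~\ref{Qpcase} are what restrict irreducible strongly divisible modules to the manageable form $f_*N$ with $N$ rank one, and Proposition~\ref{crystliftrk1} is what produces the rank one crystalline character with prescribed Hodge--Tate weights. The present proof is just the assembly step, relying on the compatibility of the Breuil--Kisin module functor with induction along unramified extensions (Lemma~\ref{yoneda}); no genuine obstacle remains once those ingredients are in place.
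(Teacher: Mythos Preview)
Your argument follows the paper's proof essentially verbatim: reduce to $M\cong f_*N$ via Lemma~\ref{FLrange} or Proposition~\ref{Qpcase}, lift $N$ to a crystalline character $T_N$ by Proposition~\ref{crystliftrk1}, set $T=\operatorname{Ind}_L^K T_N$, and identify $M(T)\cong f_*M(T_N)$ using Lemma~\ref{yoneda} together with $(\operatorname{Ind}_L^K T_N)|_{G_{K_\infty}}\cong \operatorname{Ind}_{L_\infty}^{K_\infty}(T_N|_{G_{L_\infty}})$.

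The one point where you diverge from the paper is the final step, and there you introduce a small circularity. You invoke Theorem~\ref{GLS} to conclude $\operatorname{HT}_\tau(T)=\operatorname{Weight}_\tau(M)$, but Theorem~\ref{GLS} has as a \emph{hypothesis} that $\operatorname{HT}_\tau(T)\subset[0,p]$, which you have not yet established; it also requires, when $p=2$, the extra condition on $\pi$ that is not part of the hypotheses of the present corollary. The paper avoids this by computing the Hodge--Tate weights of the induction directly: since $L/K$ is unramified one has $\operatorname{HT}_\tau(\operatorname{Ind}_L^K T_N)=\bigcup_{\theta|_k=\tau}\operatorname{HT}_\theta(T_N)=\bigcup_{\theta|_k=\tau}\operatorname{Weight}_\theta(N)$, and this last union equals $\operatorname{Weight}_\tau(f_*N)=\operatorname{Weight}_\tau(M)$ by Lemma~\ref{SDinductionrestriction}(2). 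Replacing your appeal to Theorem~\ref{GLS} by this direct computation closes the gap and makes the argument match the paper's.
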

\begin{proof}
	If $M$ is an in (1) then this follows from  Proposition~\ref{crystliftrk1}. If $M$ is as in (1) or (2) then by Lemma~\ref{FLrange} and Proposition~\ref{Qpcase}, after possibly enlarging $\mathbb{F}$, we have that $M \cong f_*N$ where $N$ is a rank one Breuil--Kisin module over $L$. Using Proposition~\ref{crystliftrk1} again, there exists a crystalline character $\chi\colon G_L \rightarrow \mathcal{O}^\times$ such that $M(\chi) \otimes_{\mathcal{O}} \mathbb{F}$ and $\operatorname{HT}_\theta(\chi) = \operatorname{Weight}_\theta(N)$. Since $L/K$ is unramified, $\operatorname{Ind}_L^K \chi$ is again crystalline and 
	$$
	\operatorname{HT}_\tau(\operatorname{Ind}_L^K \chi) = \bigcup_{\theta|_k = \tau} \operatorname{HT}_\theta(\chi) = \bigcup_{\theta|_k = \tau} \operatorname{Weight}_\theta(N) = \operatorname{Weight}_\tau(M)
	$$
	cf.~\cite[Corollary 7.1.2]{GHS}. Since $T(f_*M(\chi)) = \operatorname{Ind}_{L_\infty}^{K_\infty} \chi$ which equals the restriction to $G_{K_\infty}$ of $\operatorname{Ind}_L^K \chi$ we deduce that $f_*M(\chi) = M(\operatorname{Ind}_L^K\chi)$, and so
	$$
	M(\operatorname{Ind}_L^K \chi) \otimes_{\mathcal{O}} \mathbb{F} = f_*\Big(M(\chi) \otimes_{\mathcal{O}} \mathbb{F}\Big) \cong \overline{M}
	$$ 
	Therefore we can take $T = \operatorname{Ind}_L^K \chi$.

\end{proof}
\section{Potentially diagonalisable lifts}\label{section7} We use the notation of potential diagonalisability as described in \cite{BLGGT} (the definition is given in the paragraph before \cite[Lemma 1.4.1]{BLGGT}). In practice we shall only use the following elementary observations regarding potential diagonalisability.
\begin{itemize} 
	\item If a crystalline $\mathcal{O}$-lattice $T$ admits a $G_K$-stable filtration $F^iT$ whose graded pieces are free $\mathcal{O}$-modules then $T$ is potentially diagonalisable if and only if $\operatorname{gr}(T) : = \bigoplus \operatorname{gr}^i(T)$ is potentially diagonalisable, cf. property (7) in the list preceding \cite[Lemma 1.4.1]{BLGGT}.
	\item If a crystalline $\mathcal{O}$-lattice $T$ is induced from a character then $T$ is potentially diagonalisable. This is because if $T$ is induced over an extension $L/K$ then $T|_{G_L}$ admits a $G_L$-stable filtration whose graded pieces are characters and so $T|_{G_L}$ is potentially diagonalisable by the previous bullet point (cf. \cite[Lemma 1.4.3]{BLGGT}). Thus $T$ is potentially diagonalisable also.
\end{itemize} 

\subsection{Obvious lifts} 
The previous two bullet points imply that, with the following definition, obvious\footnote{Our terminology comes from \cite[Subsection 7.1]{GHS} where the notion of an obvious weight is introduced.} crystalline $\mathcal{O}$-lattices are potentially diagonalisable.
\begin{definition}
	A crystalline $\mathcal{O}$-lattice is obvious if it admits a filtration $F^iT$ by $G_K$-stable submodules such that each graded piece $\operatorname{gr}^i(T)$ is irreducible after inverting $p$ and induced from a crystalline character over an unramified extension of $K$.
\end{definition}

\begin{theorem}\label{thm}
	Assume $K = K_0$ and $\pi$ is chosen so that $K_\infty \cap K(\mu_{p^\infty}) = K$. Let $\overline{M} \in \operatorname{Mod}^{\operatorname{SD}}_k(\mathcal{O})$ and assume $T(\overline{M})$ is cyclotomic-free (as in Definition~\ref{cycfreeGKinfty}) and that one of the following conditions is satisfied.
	\begin{itemize}
		\item Every irreducible subquotient of $\overline{M}$ is of rank one.
		\item $\operatorname{Weight}_\tau(\overline{M}) \subset [0,p-1]$ for every $\tau \in \operatorname{Hom}_{\mathbb{F}_p}(k,\mathbb{F})$.
		\item $k = \mathbb{F}_p$ and every irreducible subquotient of $\overline{M}$ has rank $\leq 4$.
	\end{itemize}
	Then, after possibly enlarging $\mathbb{F}$, there exists an obvious crystalline $\mathcal{O}$-lattice $T$ such that $M(T) \otimes_{\mathcal{O}} \mathbb{F} \cong \overline{M}$.
\end{theorem}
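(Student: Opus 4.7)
The plan is to induct on the length $n$ of $\overline{M}$ in $\operatorname{Mod}^{\operatorname{SD}}_k(\mathcal{O})$. After enlarging $\mathbb{F}$ once at the outset so that Corollary~\ref{liftingcoro} applies to every irreducible subquotient of $\overline{M}$, the base case $n=1$ is immediate: each of the three bullet hypotheses reduces to one of the three cases of that corollary, producing a crystalline lattice $T$ induced over an unramified extension from a crystalline character, with $M(T) \otimes_{\mathcal{O}} \mathbb{F} \cong \overline{M}$. The trivial filtration exhibits $T$ as obvious, and irreducibility of $\overline{T} := T \otimes_{\mathcal{O}} \mathbb{F}$ as a $G_K$-representation follows from Lemma~\ref{equiv} combined with the irreducibility of $T(\overline{M})$ noted in the proof of Proposition~\ref{approximate}.

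For the inductive step, I will fix a composition series $0 = U_n \subset \dots \subset U_0 = T(\overline{M})$ of $G_{K_\infty}$-representations witnessing cyclotomic-freeness in the sense of Definition~\ref{cycfreeGKinfty}, and then apply Lemma~\ref{exactly} iteratively to promote it to a filtration $0 = \overline{M}_{(n)} \subset \dots \subset \overline{M}_{(0)} = \overline{M}$ in $\operatorname{Mod}^{\operatorname{BK}}_k(\mathcal{O})$. By Proposition~\ref{SDexact} each subquotient is strongly divisible with weights lying in those of $\overline{M}$, so the bullet hypotheses of Theorem~\ref{thm} pass to every subquotient. Writing $\overline{N} = \overline{M}_{(1)}$ and $\overline{P} = \overline{M}/\overline{N}$ then gives an extension
\[
0 \to \overline{N} \to \overline{M} \to \overline{P} \to 0
\]
in $\operatorname{Mod}^{\operatorname{SD}}_k(\mathcal{O})$ for which $T(\overline{P}) \cong U_0/U_1$ is irreducible, $T(\overline{N}) \cong U_1$ is cyclotomic-free via the truncated composition series, and $T(\overline{P}) \otimes \mathbb{F}(1)$ is not a Jordan--Hölder factor of $T(\overline{N})$.

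Calling the inductive hypothesis on $\overline{N}$ will produce an obvious crystalline lattice $T_1$ with $M(T_1) \otimes_{\mathcal{O}} \mathbb{F} \cong \overline{N}$; I shall arrange the construction so that the $G_K$-filtration witnessing obviousness of $T_1$ reduces modulo $\varpi$ to a $G_K$-composition series of $\overline{T}_1 := T_1 \otimes_{\mathcal{O}} \mathbb{F}$ whose restriction to $G_{K_\infty}$ is exactly the truncated cyclotomic-free series $T(\overline{M}_{(1)}) \supset T(\overline{M}_{(2)}) \supset \dots$, so that Lemma~\ref{equiv} exhibits $\overline{T}_1$ as cyclotomic-free as a $G_K$-representation. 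The base case applied to $\overline{P}$ produces $T_2$ induced from a crystalline character with $\overline{T}_2$ irreducible as a $G_K$-representation. Proposition~\ref{liftingextThm} will then apply: Hodge--Tate weights lie in $[0,p]$ by Theorem~\ref{GLS}, and the remaining cyclotomic-freeness, irreducibility, and Jordan--Hölder hypotheses have just been checked. It returns a crystalline extension $0 \to T_1 \to T \to T_2 \to 0$ whose associated Breuil--Kisin extension reduces to the given one; concatenating the filtration of $T_1$ with the step $T_1 \subset T$ exhibits $T$ as obvious.

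The substantive work is already behind me: Proposition~\ref{liftingextThm} combines the $\operatorname{Ext}^1$ dimension count of Proposition~\ref{DIMFORM} with the $G_{K_\infty}$-to-$G_K$ descent of Corollary~\ref{stable} to lift strongly divisible extensions to genuine crystalline ones, while Corollary~\ref{liftingcoro} handles every irreducible falling under the three bullets. The main obstacle in carrying out the induction itself is purely bookkeeping --- ensuring the cyclotomic-free structure on the $G_{K_\infty}$-side is tracked consistently through the Breuil--Kisin filtration and up to a $G_K$-filtration of the obvious lift, so that the hypotheses of Proposition~\ref{liftingextThm} are met at each step.
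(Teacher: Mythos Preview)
Your proposal is correct and follows essentially the same inductive strategy as the paper: peel off an irreducible quotient using cyclotomic-freeness, lift the sub and quotient separately via the inductive hypothesis and Corollary~\ref{liftingcoro}, and then invoke Proposition~\ref{liftingextThm} to lift the extension. You are in fact more careful than the paper about the one genuinely delicate point, namely verifying that the reduction $\overline{T}_1$ of the inductively constructed lift is cyclotomic-free \emph{as a $G_K$-representation} (not merely after restriction to $G_{K_\infty}$), which is needed to feed into Proposition~\ref{liftingextThm}; your device of tracking the obvious filtration through the induction so that it reduces to the chosen cyclotomic-free series is exactly the right way to handle this.
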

\begin{proof}
	We argue by induction on the length of $T(\overline{M})$. If $T(\overline{M})$ is irreducible then $\overline{M}$ is irreducible and the theorem follows from Corollary~\ref{liftingcoro}. If $T(\overline{M})$ has length $>1$ then there is an exact sequence $0 \rightarrow \overline{T}_1 \rightarrow T(\overline{M}) \rightarrow\overline{T}_2 \rightarrow 0$ with neither $\overline{T}_1,\overline{T}_2 = 0$. Since $T(\overline{M})$ is cyclotomic-free we can assume $\overline{T}_1$ is cyclotomic-free, $\overline{T}_2$ is irreducible, and $\overline{T}_2 \otimes \overline{\mathbb{F}}(1)$ is not a Jordan--Holder factor of $\overline{T}_1$. Lemma~\ref{exactly} gives an exact sequence $0 \rightarrow \overline{M}_2 \rightarrow \overline{M} \rightarrow \overline{M}_1 \rightarrow 0$ in $\operatorname{Mod}^{\operatorname{BK}}_k(\mathcal{O})$ with $T(\overline{M}_i) = T_i$. Since $\overline{M} \in \operatorname{Mod}^{\operatorname{SD}}_k(\mathcal{O})$ so are the $\overline{M}_i$ by Proposition~\ref{SDexact}. Proposition~\ref{SDexact} also implies $\operatorname{Weight}_\tau(\overline{M}_1) \bigcup \operatorname{Weight}_\tau(\overline{M}_2) = \operatorname{Weight}_\tau(\overline{M})$. Thus both $\overline{M}_i$ satisfy the conditions of the theorem. Our inductive hypothesis provides obvious crystalline $\mathcal{O}$-lattices $T_i$ such that $M(T_i) \otimes_{\mathcal{O}} \mathbb{F} = \overline{M}_i$. This puts us in the situation of Proposition~\ref{liftingextThm}, and this proposition provides us with a crystalline $\mathcal{O}$-lattice $T$ as desired.
\end{proof}
As in the introduction, a crystalline lift of a representation $\overline{\rho}\colon G_K \rightarrow \operatorname{GL}_n(\overline{\mathbb{F}}_p)$ is a crystalline representation $\rho\colon G_K \rightarrow \operatorname{GL}_n(\overline{\mathbb{Z}}_p)$ such that $\rho \otimes_{\overline{\mathbb{Z}}_p} \overline{\mathbb{F}}_p \cong \overline{\rho}$. We say $\rho$ is an obvious crystalline lift if $\rho$ is an obvious crystalline $\mathcal{O}$-lattice.
\begin{corollary}
	Suppose $K = K_0$. Let $\overline{\rho}\colon G_K \rightarrow \operatorname{GL}_n(\overline{\mathbb{F}}_p)$ be continuous, cyclotomic-free, and with Jordan--Holder factors all characters. Then $\overline{\rho}$ admits a  crystalline lift $\rho$ with $\operatorname{HT}_\tau(\rho) \subset [0,p]$ for each $\tau$, if and only if $\overline{\rho}$ admits an obvious crystalline lift $\rho'$ with $\operatorname{HT}_\tau(\rho) = \operatorname{HT}_\tau(\rho')$.
\end{corollary}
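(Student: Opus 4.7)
The \emph{if} direction is immediate: an obvious crystalline lift is by definition a crystalline lift, so one may take $\rho = \rho'$.

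For the \emph{only if} direction, the plan is to translate $\rho$ to the Breuil--Kisin side, apply Theorem~\ref{thm}, and then translate back. After enlarging $E$ we may assume $\rho$ takes values in $\operatorname{GL}_n(\mathcal{O})$. Applying Kisin's functor (Theorem~\ref{kisin}) produces $M(\rho) \in \operatorname{Mod}^{\operatorname{BK}}_K(\mathcal{O})$, and Theorem~\ref{GLS} identifies $\overline{M} := M(\rho) \otimes_{\mathcal{O}} \mathbb{F}$ as an object of $\operatorname{Mod}^{\operatorname{SD}}_k(\mathcal{O})$ with $\operatorname{Weight}_\tau(\overline{M}) = \operatorname{HT}_\tau(\rho)$; Lemma~\ref{BKF} gives $T(\overline{M}) \cong \overline{\rho}|_{G_{K_\infty}}$. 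I next verify the hypotheses of Theorem~\ref{thm} for $\overline{M}$. The cyclotomic-freeness of $\overline{\rho}$ (Definition~\ref{cycfreecondition}) passes to cyclotomic-freeness of $\overline{\rho}|_{G_{K_\infty}}$ (Definition~\ref{cycfreeGKinfty}) by Lemma~\ref{equiv} and the discussion at the end of Subsection~\ref{tame}. Furthermore, since every Jordan--Holder factor of $\overline{\rho}$ is a character, Lemma~\ref{equiv} implies the Jordan--Holder factors of $\overline{\rho}|_{G_{K_\infty}}$ remain characters, and then Lemma~\ref{exactly} forces every irreducible subquotient of $\overline{M}$ to have rank one. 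The first bullet of Theorem~\ref{thm} therefore applies, producing an obvious crystalline $\mathcal{O}$-lattice $T$ with $M(T) \otimes_{\mathcal{O}} \mathbb{F} \cong \overline{M}$. Theorem~\ref{GLS} applied to $T$ gives $\operatorname{HT}_\tau(T) = \operatorname{Weight}_\tau(\overline{M}) = \operatorname{HT}_\tau(\rho)$, which is the required matching of Hodge--Tate weights.

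The last and most delicate step---and what I expect to be the main obstacle---is to upgrade $T$ from a lift of $\overline{\rho}|_{G_{K_\infty}}$ to a lift of $\overline{\rho}$ as a $G_K$-representation. Applying $T(\cdot)$ to the isomorphism $M(T) \otimes_{\mathcal{O}} \mathbb{F} \cong \overline{M}$ immediately yields an isomorphism $T/\varpi|_{G_{K_\infty}} \cong \overline{\rho}|_{G_{K_\infty}}$, but not a priori one of $G_K$-representations. This is exactly the situation that Section~\ref{section1} is designed to handle. My plan is to unwind the inductive construction of $T$ in the proof of Theorem~\ref{thm}: at each inductive step $T$ is built, via Proposition~\ref{liftingextThm}, as a $G_K$-equivariant crystalline extension whose reduction agrees with the corresponding step of a cyclotomic-free composition series of $\overline{\rho}$ on restriction to $G_{K_\infty}$; the injectivity of restriction on $\operatorname{Ext}^1$ supplied by Lemma~\ref{devissage} then forces the $G_K$-extension classes at that step to coincide. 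Induction on the length of the composition series identifies $T/\varpi$ with $\overline{\rho}$ as $G_K$-representations. Alternatively, once it is checked that the filtration inherited from the obvious structure on $T$ makes $T/\varpi$ cyclotomic-free, one can simply apply Theorem~\ref{ff} to the $G_{K_\infty}$-isomorphism and conclude $G_K$-equivariance in one stroke. Either way, $\rho' := T$ is the desired obvious crystalline lift.
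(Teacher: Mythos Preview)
Your proposal is correct and follows essentially the same route as the paper. One small omission: you need to first choose the uniformiser $\pi$ so that $K_\infty \cap K(\mu_{p^\infty}) = K$, since this is a standing hypothesis of both Theorem~\ref{GLS} and Theorem~\ref{thm}. For the final step, the paper does exactly your ``alternatively'' clause---it applies Theorem~\ref{ff} directly to the $G_{K_\infty}$-isomorphism $\rho'\otimes_{\mathcal{O}}\mathbb{F}\cong\overline{\rho}$---so your more elaborate first plan of unwinding the induction via Lemma~\ref{devissage} is unnecessary; your instinct that one must check $T/\varpi$ is cyclotomic-free as a $G_K$-representation is well placed (the paper leaves this implicit, but it follows from the inductive construction in the proof of Theorem~\ref{thm}).
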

\begin{proof}
	Choose $\pi$ so that $K_\infty \cap K(\mu_{p^\infty}) = K$. Suppose such a $\rho$ exists. Choosing our coefficient field $E$ sufficiently large we may suppose $\rho$ factors through $\operatorname{GL}_n(\mathcal{O})$. Theorem~\ref{GLS} implies $M(\rho) \in \operatorname{Mod}^{\operatorname{SD}}_k(\mathcal{O})$ and $\operatorname{Weight}_\tau(M) = \operatorname{HT}_\tau(\rho)$. Put $\overline{M} = M(\rho) \otimes_{\mathcal{O}} \mathbb{F}$. Then $T(\overline{M}) \cong \overline{\rho}$ as $G_{K_\infty}$-representations, so $T(\overline{M})$ is cyclotomic-free as a $G_{K_\infty}$-representation. Theorem~\ref{thm} implies there exists a potentially diagonalisable $\rho'$ with $M(\rho') \otimes_{\mathcal{O}} \mathbb{F} \cong \overline{M}$ and with $\operatorname{HT}_\tau(\rho') = \operatorname{Weight}_\tau(\overline{M})$. Thus $\rho' \otimes_{\mathcal{O}} \mathbb{F}  \cong \overline{\rho}$ as $G_{K_\infty}$-representations. By Theorem~\ref{ff} they are isomorphic as $G_K$-representations and we are done.
\end{proof}
If we assume $K = \mathbb{Q}_p$ we can also prove:
\begin{corollary}
	Let $\overline{\rho}\colon G_{\mathbb{Q}_p} \rightarrow \operatorname{GL}_n(\overline{\mathbb{F}}_p)$ be continuous and cyclotomic-free, with $n \leq 5$. Then $\overline{\rho}$ admits a  crystalline lift $\rho$ with $\operatorname{HT}(\rho) \subset [0,p]$ for each $\tau$, if and only if $\overline{\rho}$ admits an obvious crystalline lift $\rho'$ with $\operatorname{HT}(\rho) = \operatorname{HT}(\rho')$.
\end{corollary}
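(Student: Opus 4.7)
The plan is to mimic the preceding corollary, with a case distinction based on the structure of the associated Breuil--Kisin module. As in the previous proof, choose $\pi$ so that $K_\infty \cap K(\mu_{p^\infty}) = K$ (possible by \cite[Lemma~2.1]{Wang17}), enlarge the coefficient field $E$ so that $\rho$ factors through $\operatorname{GL}_n(\mathcal{O})$, and set $\overline{M} = M(\rho) \otimes_{\mathcal{O}} \mathbb{F}$. By Theorem~\ref{GLS}, $\overline{M}$ is strongly divisible with $\operatorname{Weight}_\tau(\overline{M}) = \operatorname{HT}(\rho)$, and $T(\overline{M}) \cong \overline{\rho}|_{G_{K_\infty}}$ is cyclotomic-free.

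If every irreducible subquotient of $\overline{M}$ has rank $\leq 4$---in particular whenever $\overline{M}$ is reducible---the third bullet of Theorem~\ref{thm} produces an obvious crystalline $\mathcal{O}$-lattice $T$ with $M(T) \otimes_{\mathcal{O}} \mathbb{F} \cong \overline{M}$. Theorem~\ref{GLS} then gives the matching Hodge--Tate weights, and Theorem~\ref{ff} promotes the mod-$p$ isomorphism from a $G_{K_\infty}$- to a $G_{\mathbb{Q}_p}$-isomorphism, exactly as in the preceding corollary.

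The remaining case is when $\overline{M}$ is itself irreducible of rank $5$. By Lemma~\ref{exactly}, $T(\overline{M}) \cong \overline{\rho}|_{G_{K_\infty}}$ is then irreducible, so by Lemma~\ref{equiv} and Lemma~\ref{irredinduced} we may write $\overline{\rho} \cong \operatorname{Ind}_L^{\mathbb{Q}_p} \overline{\chi}$ for the unramified degree-$5$ extension $L/\mathbb{Q}_p$ and some character $\overline{\chi}\colon G_L \to \overline{\mathbb{F}}_p^\times$. I would construct $\rho'$ directly as $\operatorname{Ind}_L^{\mathbb{Q}_p} \chi'$, where $\chi'$ is a crystalline character lifting $\overline{\chi}$ whose collection of Hodge--Tate weights $(s_\theta)_\theta$, one per embedding $\theta\colon L \hookrightarrow \overline{\mathbb{Q}}_p$, assembles as a multiset into $\operatorname{HT}(\rho)$. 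Existence of such a $\chi'$ reduces via Proposition~\ref{crystliftrk1} to producing a rank-one Breuil--Kisin module over $L$ with weights $(s_\theta)$ whose $G_{L_\infty}$-character is $\overline{\chi}|_{G_{L_\infty}}$; starting from the approximation $\overline{M} \hookrightarrow f_*N$ of Proposition~\ref{approximate} with weights $(r_\theta)$, a direct reindexing shows that the modified weights $w_\theta := r_\theta + p\delta_{\theta \circ \varphi} - \delta_\theta$, which lie in $\operatorname{Weight}_\tau(\overline{M}) = \operatorname{HT}(\rho)$, satisfy the same mod-$p$ inertia constraint as $(r_\theta)$ and thus provide the required labeling. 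Then $\operatorname{Ind}_L^{\mathbb{Q}_p} \chi'$ is obvious by definition, has the required Hodge--Tate weights, and reduces modulo $p$ to $\overline{\rho}$, using Lemma~\ref{equiv} to promote the $G_{L_\infty}$-isomorphism of characters to an isomorphism over $G_L$.

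The hard part is the irreducible rank-$5$ case: Theorem~\ref{thm} does not apply there because $\overline{M}$ may fail to be of the form $f_*N$, as highlighted in the discussion preceding Proposition~\ref{Qpcase}. My proposal bypasses this by lifting the Galois character $\overline{\chi}$ directly rather than lifting $\overline{M}$, with the inertia/Hodge--Tate weight bookkeeping from Proposition~\ref{approximate} verifying that $\operatorname{HT}(\rho)$ is an admissible multiset of weights for such a character lift.
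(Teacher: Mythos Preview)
Your reducible case matches the paper's argument exactly: when every Jordan--Holder factor of $\overline{\rho}$ has dimension $\leq 4$, both you and the paper invoke Theorem~\ref{thm} and then Theorem~\ref{ff}, as in the preceding corollary.

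The irreducible rank-$5$ case, however, has a genuine gap. Proposition~\ref{approximate} tells you that each $w_\theta = r_\theta + p\delta_{\theta\circ\varphi} - \delta_\theta$ lies in the multiset $\operatorname{Weight}_\tau(\overline{M}) = \operatorname{HT}(\rho)$, and it is true that a crystalline character with weights $(w_\theta)_\theta$ reduces to $\overline{\chi}$ (since the sub-module $P$ of $N$ generated by the $u^{\delta_\theta}e_\theta$ has $T(P)=T(N)$). But you have not shown that the \emph{multiset} $\{w_\theta : \theta\}$ equals $\operatorname{HT}(\rho)$. Knowing that each of five integers individually belongs to a five-element multiset does not force the two multisets to coincide, and indeed the example referenced before Proposition~\ref{Qpcase} shows that the naive choice $\{r_\theta\}$ can fail to recover $\operatorname{HT}(\rho)$; the passage from $(r_\theta)$ to $(w_\theta)$ does not obviously repair this. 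Your ``direct reindexing'' assertion is exactly the missing step, and it is not a formality.

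The paper does not attempt to fill this gap internally. Instead it cites the main result of \cite{B18} (Theorem~7.2.1 there), which handles the irreducible (more generally, semisimple) case by a separate and more involved argument establishing precisely that the required labeling of $\operatorname{HT}(\rho)$ by embeddings of $l$ exists. Your proposal would need to either reproduce that argument or cite it.
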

\begin{proof}
	When every Jordan--Holder factor of $\overline{\rho}$ has dimension $\leq 4$ this follows from Theorem~\ref{thm} by the argument used in the above corollary. If $\overline{\rho}$ is irreducible this fact follows from the main result of \cite{B18}, cf. Theorem~7.2.1.
\end{proof}
\bibliography{/home/robin/Dropbox/Maths/biblio}
\end{document}